\newcommand{\R}{\mathbb{R}}
\newcommand{\C}{\mathbb{C}}
\newcommand{\D}{\mathbb{D}}
\newcommand{\Kappa}{\mathcal{K}}
\newcommand{\cis}{\operatorname{cis}}
\newcommand{\res}{\operatorname{res}}
\newcommand{\length}{\operatorname{length}}
\newcommand{\area}{\operatorname{area}}
\newcommand{\re}{\operatorname{Re}}
\newcommand{\ext}[0]{\ensuremath{\mathrm{ext}}}
\numberwithin{equation}{section}
\numberwithin{figure}{section}
\newcommand{\skp}[2]{\left({#1}\cdot{#2}\right) }
\newcommand{\sk}[2]{{#1}\cdot{#2}}
\newcommand{\dd}[1]{\frac{d}{d\, {#1}}}
\def\dnt{R^{90}}
\def\N{d}
\theoremstyle{plain} 
\newtheorem{theorem}{Theorem}[section]
\newtheorem{lemma}[theorem]{Lemma}
\newtheorem{proposition}[theorem]{Proposition}
\theoremstyle{definition} 
\newtheorem{definition}[theorem]{Definition}
\newtheorem{example}[theorem]{Example}
\title{The Gauss-Bonnet Formula for Harmonic Surfaces}
\author
{Peter Connor}
\address{Peter Connor\\Department of Mathematical Sciences\\Indiana University South Bend\\South
Bend\\IN 46634\\USA}
\author{ 
Kevin Li
}
\address{Kevin Li\\Department of Computer Science and Mathematical Sciences\\Penn State Harrisburg\\Middletown, PA 17057\\USA}
\author{
Matthias Weber
}
\address{Matthias Weber\\Department of Mathematics\\Indiana University\\
Bloomington, IN 47405
\\USA}
\thanks{This work was partially supported by a grant from the Simons Foundation (246039 to Matthias Weber)}
\subjclass[2010]{Primary 53C43; Secondary 53C45}
\date{\today}
\begin{document}

\begin{abstract}
We consider  harmonic immersions  in $\R^{\N}$ of compact Riemann surfaces with finitely many punctures where the  
harmonic coordinate functions are given as  real parts of meromorphic functions. We prove that such surfaces
have finite total Gauss curvature. The contribution of each end is a multiple of $2\pi$, determined by the maximal pole order of the meromorphic  functions. This generalizes the well known Gackstatter-Jorge-Meeks formula for minimal surfaces. The situation is complicated as the ends with their induced metrics are generally not conformally  equivalent to punctured disks, nor do the surfaces generally have limit tangent planes at the ends.
\end{abstract}

\maketitle
\section{Introduction}

The study of harmonic surfaces is largely motivated by the desire to understand to what extent this theory differs from the more special and better studied class of minimal surfaces. Several papers by Klotz \cite{kl1,kl2,kl3,kl4} from the sixties through eighties deal with the normal map of a harmonic surface and its quasiconformal properties. In a recent paper, Alarc\'o{}n and L\'o{}pez \cite{al1}  prove that a complete harmonic immersion has finite  $L^2$ norm of the shape operator ($\int |S|^2\, dA <\infty$) if and only if it satisfies Osserman's theorem in the sense that the domain of the surface is conformally a compact Riemann surface with finitely many punctures and the normal map extends continuously into the punctures.

In this paper, we will study harmonically parametrized surfaces   in $\R^{\N}$, $\N\ge 3$,  where the domain is a punctured compact
Riemann surface, and the  coordinate functions are real parts of integrals of meromorphic 1-forms. This is a larger class than that of those parametrized surfaces where merely the coordinate functions are assumed to be real parts of meromorphic functions. It contains complete minimal surfaces of finite total curvature, where in addition the sum of the squares of the meromorphic coordinate 1-forms needs to vanish to make the parametrization conformal. This relationship to minimal surfaces was our initial motivation to study this larger class of surfaces.
But, generally, in our case the parametrization is not even quasiconformal, nor does the Gauss map extend continuously into the punctures.

However, these surfaces have  finite total Gauss curvature and surprisingly satisfy a Gauss-Bonnet  formula  in the spirit of the Gackstatter-Jorge-Meeks formula \cite{gac1,jm1} for minimal surfaces. 
The proof of this formula is our main objective.

Extensions of the Gauss-Bonnet theorem to more general open surfaces have been investigated in the past: In \cite{shio1}, Shiohama 
derives a general Gauss-Bonnet formula where the contribution of the ends to the total curvature is given by a limit of circumferences of geodesic circles, {\em provided} the total curvature is finite. In   \cite{wh1}, White assumes finite $L^2$-norm of the shape operator to show that the contribution of the ends is a multiple of $2\pi$. This appears to be the first indication that this contribution is quantized under certain conditions.

We will now introduce some notations and discuss examples to explain or main theorem.
Let  $\omega_k$, $k=1,\ldots,{\N}$  be meromorphic 1-forms in the unit disk $\D$ that are holomorphic in the punctured disk $\D^*=\D-\{0\}$.
We assume that the residues $\res_0 \omega_k$ are all real. Then

\begin{equation}
\label{eqn:param}
f(z) = \re \int^z (\omega_1,\ldots,\omega_{\N})
\end{equation}
defines a harmonic map $f:\D^*\to\R^{\N}$. We say that $\D^*$ represents an {\em end}. 

Note that a regular affine transformation can change  the order of the forms $\omega_k$ while not affecting the appearance of the end by much. To obtain a rough classification of ends that is independent of affine modifications, we define:

\begin{definition}
We say that two ends $f$ and $\tilde f$  given as above are \emph{affinely equivalent} if there is a regular real affine transformation $A:\R^{\N}\to \R^{\N}$ such that $\tilde f = A\circ f$.

We say that an end is in \emph{reduced form} if the pole orders $n_k$ of $\omega_k$ at $0$ satisfy $n_1\le n_2\ldots \le n_{\N}$ and if $(n_1,n_2,\ldots,n_{\N})$ is minimal in lexicographic ordering among all affinely equivalent ends. In this case, we call 
the $\N$-tuple $(n_1,\ldots, n_{\N})$ the \emph{type} of the end.
\end{definition}

\begin{example}
The end given by
\[
f(z) =\re \int^z \left(\frac1z+\frac1{z^3},\frac1{z^2}+\frac{i}{z^3},\frac1{z^2}+\frac1{z^3}\right)\, dz
\]
has type  $(2,3,3)$ since it can be affinely transformed into
\[
\tilde f(z) =\re \int^z \left(\frac1{z}-\frac1{z^2},\frac1{z}+\frac{i}{z^3},\frac1{z^2}+\frac1{z^3}\right)\, dz \ .
\]
\end{example}

Observe that while in the domain the end looks like a punctured disk,  the Riemannian metric of the surface induced from $\R^3$ does not need to be conformally equivalent to a punctured disk:

\begin{example}
Using an extremal length argument, we will show that  with
\begin{eqnarray*}
\omega_1 &= 1\, dz
\\
\omega_2 &= \frac1z \, dz
\\
\omega_3 &= \frac i {z^2}\, dz
\end{eqnarray*}
the induced metric on $\D^*$ is not conformally equivalent to any punctured domain. 
Recall that the extremal length of a curve family $\Gamma$  on a Riemann surface is defined as
\[
\ext(\Gamma) =\sup_{\rho} \inf_{\gamma\in\Gamma} \frac {\length_\rho(\gamma)^2}{\area(\rho)}\ ,
\]
where the supremum is taken over all finite area and non-zero conformal metrics $\rho$ and $\length_\rho(\gamma)$ is the length of $\gamma$ with respect to $\rho$.
It is well known that the extremal length of the family of curves $\Gamma$ encircling a puncture is 0 for any Riemann surface. We will bound the extremal length of $\Gamma$ from below
for the conformal class of  metrics induced by the harmonic parametrization above.

To this end, we compute
in polar coordinates $z=r e^{i t}$
\[
f(r,t) = \left(r \cos(t),\log(r), -\frac1r \sin(t)\right)\ .
\]
The first fundamental form becomes $g= \frac1{r^2} g_0$ with
\[
g_0 = \begin{pmatrix}
1+r^2 \cos(t)^2 +\frac1{r^2}\sin(t)^2 & -(r^3+\frac1r)\cos(t)\sin(t)\\
-(r^3+\frac1r)\cos(t)\sin(t) & r^2 \sin(t)^2+\frac1{r^2} \cos(t)^2
\end{pmatrix} \ .
\]

Note that $g_0$ is just $g$ conformally scaled, and has finite and non-zero area. We use this metric as a test metric to estimate the extremal length of 
the curves $\Gamma$ from below:

The length of the tangent vectors to these curves  us bounded from below by  the lengths of their components in the $t$-direction. Because of
\[
g_0\left(\dd t, \dd t\right) = \cos(t)^2+r^4 \sin(t)^2 \ge \cos(t)^2\ ,
\]
the $g_0$-length of all curves enclosing  0 is bounded below by 4.
This shows that the punctured disk with metric $g_0$ (or $g$) cannot be conformally equivalent to any punctured Riemann surface.

\begin{figure}[h]
	\centerline{ 
		\includegraphics[width=2.5in]{./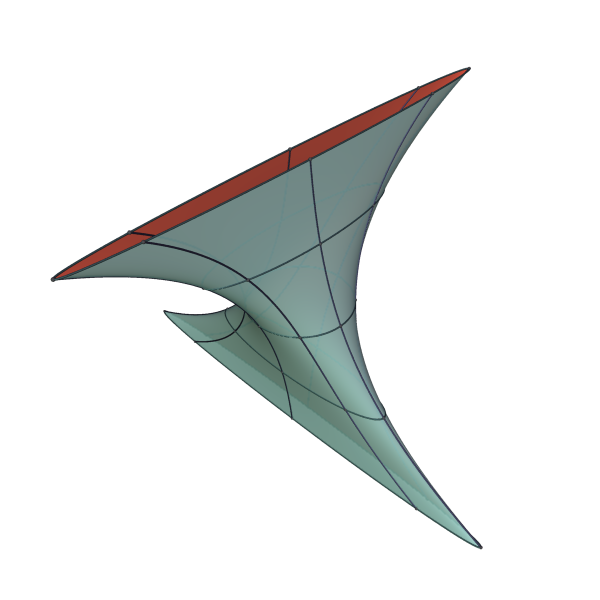}
	}
	\caption{A sphere with two ends of type $(0,1,2)$}
	\label{figure:case012}
\end{figure}

\end{example}

We will now introduce a quantity that measures the contribution of an end to the total Gauss curvature of a surface.

The total curvature of an open surface of non-positve curvature is  defined as the infimum of the total curvatures of compact subsets of the surface.
This infimum can be computed as  limit over any compact exhaustion of the surface; we choose complements of coordinate disks of shrinking radii about the punctures.

Let $X$ be a compact Riemann surface with finitely many points $p_i\in X$, and denote by $X'=X-\{p_1,\ldots,p_n\}$ the punctured surface.
Also denote by $\hat X$ the surface $X$ with disjoint  disks $D_i$ around each $p_i$ removed; this is a surface with boundary.
By the Gauss-Bonnet formula,
\[
\int_{\hat X} K \,dA+ \int_{\partial\hat X} \kappa_g ds=   2\pi \chi(\hat X) =  2\pi \chi( X')\ .
\]
Consequently,

\begin{align*}
\int_{X'} K \,dA&= \int_{\hat X} K \,dA + \sum_{i=1}^n \int_{D_i'} K \,dA\\
&=  2\pi \chi( X')  + \sum_{i=1}^n \int_{D_i'} K \,dA - \int_{\partial\hat X} \kappa_g ds\\
&=  2\pi \chi( X')  + \sum_{i=1}^n \int_{D_i'} K \,dA +\sum_{i=1}^n  \int_{\partial D_i} \kappa_g ds\ .
\end{align*}
Note that we switch the sign in the integrals over the geodesic curvature, since the boundaries of the disks $D_i$ are  the boundary components 
of $\hat X$ with opposite orientation.
This motivates the following definition:

\begin{definition}
The Gauss curvature $\Kappa_i$ of the puncture $p_i$ is the generally improper integral 
\[
\Kappa_i=   \int_{D_i'} K \,dA +\int_{\partial  D_i} \kappa_g \,ds \ .
\]
\end{definition}

Finally, we  rewrite this definition as a limit of geodesic curvature integrals. Identify the disks $D_i$ with the unit disk $|z|<1$ and 
denote by $A_i(r)$ the annulus $r<|z|<1\subset D_i$. By the Gauss-Bonnet formula again,
\begin{align*}
 \int_{D_i'} K \,dA&=\lim_{r\to 0} \int_{A_i(r)} K \,dA\\
 &=-\lim_{r\to 0}\int_{\partial  A_i(r)} \kappa_g \,ds\\
 &=\lim_{r\to 0}\int_{|z|=r} \kappa_g \,ds-\int_{\partial  D_i} \kappa_g \,ds\ .
\end{align*}

Thus we can rewrite the definition above as 

\[
\Kappa_i=   \lim_{r\to 0}\int_{|z|=r} \kappa_g \,ds \ .
\]

Observe again that this limit is independent of the chosen coordinate of $X$ about the puncture $p_i$.

The main goal of this paper is to evaluate this  limit for harmonic surfaces.

Provided we can evaluate the Gauss curvatures at the punctures, we immediately obtain a global Gauss-Bonnet theorem:

\begin{theorem}
Suppose $X$ is a compact Riemann surface with finitely many points $p_i\in X$. Let $X'=X-\{p_1,\ldots,p_n\}$ and $f:X'\to \R^{\N}$ be an immersion  with finite Gauss curvatures $\Kappa_i$ at $p_i$. Then we have
\[
\int_{X'} K\, dA - \sum_{i=1}^n \Kappa_i = 2\pi \chi(X') \ .
\]
\end{theorem}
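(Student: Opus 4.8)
The plan is to deduce this from the classical Gauss-Bonnet theorem for a compact surface with boundary, using the very definition of $\Kappa_i$ to identify the boundary terms after a limiting process. Since $f$ is an immersion, the metric induced by $f$ is a genuine Riemannian metric on $X'$, and $K$, $\kappa_g$ are its Gauss curvature and intrinsic geodesic curvature; these are intrinsic quantities, so the codimension of the immersion plays no role. I would fix mutually disjoint holomorphic coordinate disks $\D_i$ centered at the punctures $p_i$ (with $p_i$ corresponding to $0$), and for $0<r<1$ set $D_i^r=\{|z|<r\}$ and $X_r = X'\setminus\bigcup_{i=1}^n D_i^r$. Then $X_r$ is a compact Riemannian surface with smooth boundary, the boundary components being the circles $c_i^r=\{|z|=r\}$; oriented counterclockwise in the charts, as in the definition of $\Kappa_i$, they appear in $\partial X_r$ with the opposite orientation since $X_r$ lies on their outside.

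Applying classical Gauss-Bonnet to $X_r$ then gives
\[
\int_{X_r} K\, dA \;-\; \sum_{i=1}^n \int_{c_i^r}\kappa_g\, ds \;=\; 2\pi\,\chi(X_r),
\]
the minus sign recording the boundary orientation. Since $X'$ deformation retracts onto $X_r$ (each punctured coordinate disk retracts onto the circle $c_i^r$), we have $\chi(X_r)=\chi(X')$ independently of $r$; equivalently, removing $n$ points or $n$ open disks from the closed surface $X$ both lower the Euler characteristic by $n$. Now let $r\to0$. By the definition of the curvature $\Kappa_i$ at $p_i$, together with the remark that $c_r$ may be replaced by any homotopic family of curves shrinking to the puncture, the hypothesis that each $\Kappa_i$ is finite gives $\int_{c_i^r}\kappa_g\, ds\to\Kappa_i$. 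Hence $\int_{X_r}K\,dA$ converges as $r\to0$; its limit is, by definition, the (generally improper) integral $\int_{X'}K\,dA$, which therefore converges and equals $2\pi\chi(X')+\sum_i\Kappa_i$. Rearranging yields the claimed identity.

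There is essentially no analytic obstacle here; the content is bookkeeping: getting the sign from the boundary orientation right, checking $\chi(X_r)=\chi(X')$, and noting that convergence of $\int_{X'}K\,dA$ is a consequence, not an additional hypothesis, of the finiteness of the individual $\Kappa_i$ (one may, if desired, split $\int_{X'}K\,dA$ into the integral over the compact core $X'\setminus\bigcup_i\D_i$ plus the convergent improper integrals $\int_{\D_i\setminus\{p_i\}}K\,dA$). The substantive work, carried out in the rest of the paper, is the actual evaluation of the $\Kappa_i$, which this theorem takes as given.
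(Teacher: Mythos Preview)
Your argument is correct and is precisely the intended one: the paper itself does not spell out a proof, stating only that ``provided we can evaluate the Gauss curvatures at the punctures, we immediately obtain a global Gauss-Bonnet theorem,'' and your write-up is exactly the natural unpacking of that sentence via the classical Gauss-Bonnet theorem on $X_r$, the orientation bookkeeping, and the observation $\chi(X_r)=\chi(X')$. Your remark that convergence of $\int_{X'}K\,dA$ is a consequence of the finiteness of the $\Kappa_i$ (rather than an extra hypothesis) is also in line with the paper's definition of $\Kappa_0$.
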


Our main result then is

\begin{theorem}
\label{thm:kappa}
For $k\in\{1,\ldots,\N \}$, let $\omega_k$ be a meromorphic 1-form in $\D$ with pole of order $n_k$ at 0 and holomorphic elsewhere. Assume that the parametrization  
\[
f(z) = \re \int^z (\omega_1,\ldots,\omega_{\N})
\]
is an immersion. Then the Gauss curvature of this punctured end is given by
\[
\Kappa_0 = -2\pi (\max(n_k)-1) \ .
\]
\end{theorem}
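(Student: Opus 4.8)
The plan is to compute the limit $\lim_{r\to 0}\int_{c_r}\kappa_g\,ds$ directly from the parametrization, exploiting the fact that only the highest pole order matters. After an affine change of coordinates (which, by the definition, changes neither $\Kappa_0$ nor $\max(n_k)$), I would assume the end is in reduced form, so $n_1\le\cdots\le n_\N$ and $N:=n_\N=\max(n_k)$. If $N\le 1$ the end extends to a branch point or regular point and $\Kappa_0=0=-2\pi(N-1)$ in the remaining case $N=1$; so assume $N\ge 2$. Write $\omega_k = \left(\dfrac{a_k}{z^{n_k}} + \hot\right)dz$ near $0$. First I would record the leading behavior of $f$ in polar coordinates $z=re^{it}$: integrating term by term, the dominant contribution to $f(r,t)$ comes from the $z^{-N}$ terms and is of order $r^{1-N}$, with the next terms of order $r^{2-N}, \ldots$, plus a logarithmic term from any simple poles (whose coefficients are real by the residue hypothesis). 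The key observation is that, after the conformal rescaling $g = r^{2(N-1)}g_0$ of the induced metric — chosen so that $g_0$ has a finite nonzero limiting behavior — the geodesic curvature integral $\int_{c_r}\kappa_g\,ds$ is conformally controlled: under $g = e^{2u}g_0$ one has $\kappa_g\,ds = \kappa_{g_0}\,ds_0 - \ast du$ along $c_r$, and $\oint_{c_r}\ast du$ captures exactly the topological/pole-order term.

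The main computational step is therefore: (i) show $u = -(N-1)\log r + (\text{bounded, with controlled }r\text{-derivative})$, so that $-\oint_{c_r}\ast du = -\oint_{c_r}\partial_r u\,r\,dt \to 2\pi(N-1)$ as $r\to 0$ up to lower-order corrections; and (ii) show $\oint_{c_r}\kappa_{g_0}\,ds_0 \to 0$. For (ii), I would argue that in the rescaled metric $g_0$ the curves $c_r$ converge (in a suitable $C^1$ sense, after reparametrization) to a fixed limiting closed curve $\gamma_0$ — essentially the curve $t\mapsto \re(a_\N e^{-iNt}, \ldots)$ traced out by the leading term — and that $g_0$ degenerates in a way that forces the total geodesic curvature of $c_r$ to vanish in the limit. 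Concretely, the leading term of $f$ parametrizes, as $t$ runs over $[0,2\pi)$, a curve that wraps $N$ times around a planar ellipse (the image of $t\mapsto\re(a_\N e^{-iNt})$ spans at most a 2-plane), and the rescaled length element concentrates; a direct estimate of $\kappa_{g_0}$ from the explicit formula for $g_0$ — mirroring the computation in the $(0,1,2)$ example in the excerpt — should show $\int_{c_r}\kappa_{g_0}\,ds_0 = O(r^{\epsilon})$ for some $\epsilon>0$.

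The structure I would follow is: (1) reduce to reduced form and to $N\ge 2$; (2) write out $f(r,t)$ and its first derivatives $f_r$, $f_t$ to leading and subleading order in $r$; (3) compute $g_0 = r^{2(N-1)}\,df\cdot df$ and identify its leading matrix, showing $\det g_0$ and the entries have clean $r\to 0$ asymptotics; (4) apply the conformal-change formula for geodesic curvature, $\oint_{c_r}\kappa_g\,ds = \oint_{c_r}\kappa_{g_0}\,ds_0 - \oint_{c_r}\ast du$ with $e^{2u}=r^{-2(N-1)}$, identify $-\oint_{c_r}\ast du = 2\pi(N-1) + o(1)$; (5) estimate $\oint_{c_r}\kappa_{g_0}\,ds_0 = o(1)$ via the explicit asymptotics from step (3); (6) conclude $\Kappa_0 = -2\pi(N-1)$.

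The hard part will be step (5): controlling the geodesic curvature of $c_r$ in the degenerating rescaled metric $g_0$. Unlike the minimal-surface case, there is no limiting tangent plane and $g_0$ does not converge to a smooth metric — it concentrates along the directions spanned by the leading Fourier mode — so one cannot simply pass to a limit. I expect the resolution is to show that the curve $c_r$ is asymptotically a geodesic-curvature-free curve in $g_0$ up to errors that integrate to zero: either by a careful Taylor expansion of $\kappa_{g_0}$ in powers of $r$ exploiting massive cancellation in the leading term (the leading term being essentially an $N$-fold cover of a planar ellipse traversed at constant "speed" in a natural parameter, which has total turning a multiple of $2\pi$ that the conformal factor is designed to absorb), or by an integration-by-parts/Stokes argument on the annulus that isolates the $2\pi(N-1)$ contribution intrinsically without ever evaluating $\kappa_{g_0}$ pointwise. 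A secondary technical nuisance is bookkeeping the subleading terms (including any $\log r$ from simple poles and the full range of intermediate pole orders $n_k$), and verifying they never contribute to the limit; the reduced-form normalization should make this manageable.
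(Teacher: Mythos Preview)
Your conformal-rescaling strategy has a genuine gap at step (4), and it is the same gap you flag as ``hard'' in step (5): the Hodge star in the identity $\kappa_g\,ds=\kappa_{g_0}\,ds_0+\ast du$ is the Hodge star of the \emph{induced} conformal class, not the Euclidean one on $\D^*$. The parametrization $f$ is harmonic but not conformal (the paper's $(0,1,2)$ example shows the induced metric is not even conformally a punctured disk), so $\ast_g\,d\log r\neq dt$ and your computation $-\oint_{c_r}\ast du=-\oint_{c_r}\partial_r u\cdot r\,dt=2\pi(N-1)$ is unjustified. In effect you have split $\oint\kappa_g\,ds$ into two pieces \emph{each} of which depends on the unknown conformal geometry of $g$; the split does not isolate a clean topological term. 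Your argument is exactly the minimal-surface argument, and works only when $f$ is conformal. (A smaller point: for $N=1$ the map does \emph{not} extend across $0$---the simple pole produces a $\log r$ term and the image is unbounded---so that case needs separate treatment, as in the paper's Case~III.)

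The paper's route is quite different and confronts the non-conformality head-on. After normalizing $\omega_\N$ by a holomorphic coordinate change, it computes the integrand $\eta_r(t)=\kappa_g|c_r'(t)|$ directly via the formula of Lemma~\ref{lemma:gc1}. The key structural fact is that $\eta_r(t)\to 0$ uniformly away from the $2(n_\N-1)$ zeros of $\beta'(t)=-\sin((n_\N-1)t)$, but blows up at those points as $r\to 0$. At each such $t_0$ the paper performs a blow-up $t=t_0+r^ns$ (with the exponent $n$ determined by the data) and shows $r^n\eta_r(t_0+r^ns)\to -a/(1+a^2s^2)$ pointwise, so each singular point contributes exactly $-\pi$. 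The passage to the limit is justified by dominated convergence, which in turn requires the delicate upper bound $|\eta_r(t)|\le M(|t|r^{n_\N-n_{\N-1}}+r^n)/(r^{2n}+t^2)$ obtained from separate numerator and denominator estimates (Sections~\ref{subsec:numerator}--\ref{subsec:denominator}). None of this can be short-circuited by a conformal factor, precisely because the singular concentration of $\eta_r$ encodes the failure of $g$ to be conformal to the disk.
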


In case the parametrization is conformal, i.e. when the surface is minimal, this theorem is well known \cite{gac1,jm1} and has a simple proof: It is easy to  see that the surface has a limit tangent plane at the end so that the curves $r e^{i t}$ become large circles in this tangent plane with multiplicity given by the pole order minus one. 

However, not all harmonic surfaces have limit tangent planes at their ends.
The question, then, is, why would we expect  the Gauss curvature to be quantized? We have a conjectural picture that does give an explanation, which we   illustrate with an end of type $(2,3,6)$, see Figure \ref{figure:end236}.

\begin{figure}[h]
	\centerline{ 
		\includegraphics[width=2.5in]{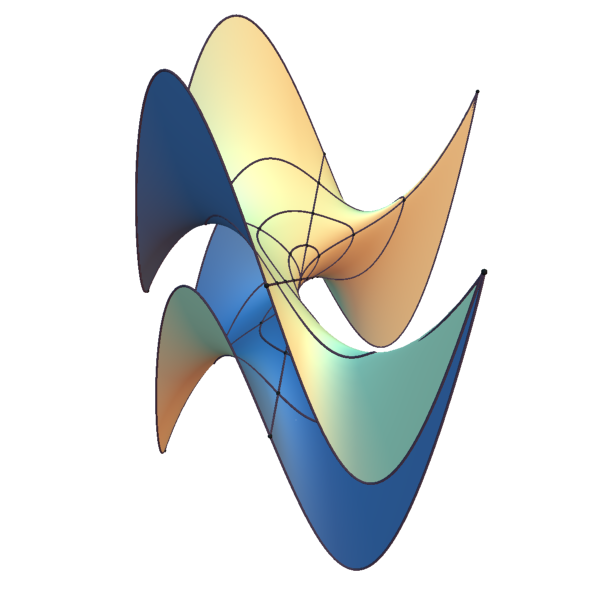}
	}
	\caption{An  embedded end of type $(2,3,6)$}
	\label{figure:end236}
\end{figure}

Numerical experiments indicate that the normal map of the surface, restricted to a curve $t\mapsto r e^{i t}$,  traces out a curve in $S^2$ that approaches the union of great circles in a single plane in $S^2$, with corners just at a pair of antipodal points. This suggests that the normal map maps the disk of radius $r$ about $0$ to a region that converges with $r\to 0$ to a union of hemispheres.

The Gauss curvature  of an end will then be the  area of this limit region, which is an integral multiple of the area of a hemisphere, see Figure \ref{figure:gauss235}.

\begin{figure}[h]
	\centerline{ 
		\includegraphics[width=2.5in]{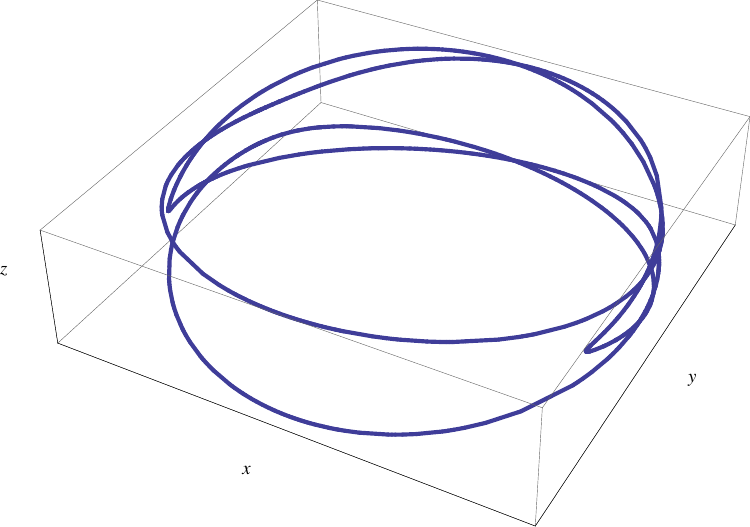}
	}
	\caption{Image of the Gauss map along $t\mapsto r e^{it}$ for the end of type $(2,3,6)$}
	\label{figure:gauss235}
\end{figure}

Our estimates are currently not strong  enough to prove a precise version of this statement. Instead we will evaluate the total curvature integral directly. This is quite delicate due to the singular nature of the integrand, shown in Figure \ref{figure:gci235} for the same end. On the other hand, our proof is essentially intrinsic, indicating that there should be a Gauss-Bonnet theorem for complete Riemannian surfaces whose ends have the same asymptotic as the ones induced by harmonic immersions of the type we consider.

\begin{figure}[h]
	\centerline{ 
		\includegraphics[width=2.5in]{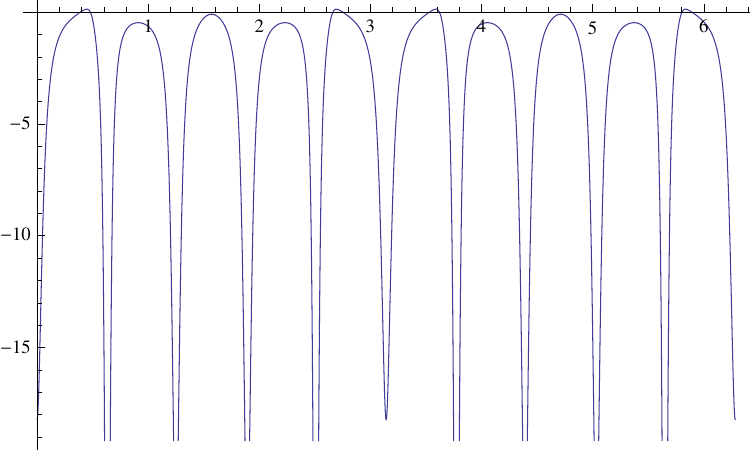}
	}
	\caption{Graph of $\eta_r(t)$ for  an  end of type $(2,3,6)$}
	\label{figure:gci235}
\end{figure}

In the example at hand, the geodesic curvature integrand $\eta_r(t) = \kappa_g(t)|c_r'(t)|$ approaches 0 in open intervals while it blows up at $t=k\pi/(\max(n_k)-1)$, where $\max(n_k) = 6$. It will turn out that this behavior is quite typical, and that each singularity will contribute the amount $-\pi$ to the total geodesic curvature when $r\to 0$.

The computations will be carried out in the  subsequent sections, which we have organized as follows:

To prove the theorem, we will distinguish two cases. By permuting the coordinates, we can assume that the pole orders are monotone: $n_1\le n_2\le \cdots\le n_{\N}$. 

In section \ref{subsec:case1}, we deal with the simplest case that $n_{{\N}-1}=n_{\N}$ where we do have a limit tangent plane at the end.

In section \ref{subsec:total}, we derive a formula for the geodesic curvature of surfaces in $\R^{\N}$ adapted to our situation and give an estimate from above.

In section \ref{subsec:normal}, we prove that we normalize the 1-form with the top pole order by a suitable holomorphic change of the coordinate.

Then, in section \ref{subsec:notation}, we introduce notation for the second case of different top two exponents $n_{\N-1}<n_{\N}$ and  $n_{\N}\ne1$. We provide a formula for the geodesic curvature in this case, expanding by powers of $r$ in polar coordinates.
This computation will reveal the singularities that the total curvature integrand develops for $r\to 0$.

We deal with these singularities using a blow-up argument in section \ref{subsec:blowup}. This requires to show that the geodesic curvature integrand is bounded uniformly by an integrable function, which is done in sections \ref{subsec:numerator} and \ref{subsec:denominator}.

Finally, in section \ref{subsec:case3} we deal with the case $n_\N=1$ which requires some special treatment.

In a logically independent companion paper \cite{clw2}, we  construct many highly complicated examples of harmonic embedded ends and complete, properly embedded harmonic surfaces to which our theorem applies. 

The authors would like to thank David Hoff, Steffen Rohde and Adam Weyhaupt for helpful discussions.

\section{Case I: Equal top exponents}\label{subsec:case1}

In this section, we will show that our main theorem holds in the case that the two top exponents are equal.
We first treat the case that the coefficients of two top exponents are independent over $\R$. This is the simplest case as here the Gauss map does still extend into the puncture.

Suppose $f:\D^*\to \R^{\N}$ is given as 
\[
f(z) =\re \int^z (\omega_1, \ldots, \omega_{\N})
\]
where $\omega_i$ are holomorphic in $\D^*$ and meromorphic in $\D$. Let $n_j$ be the order of the pole of $\omega_j$ at $0$. 

\begin{lemma}
Assume that $n_1\le n_2\le \ldots \le n_{{\N}-1} =  n_{\N}$. Write 
\[
\omega_j = (a_j z^{-n_j}+\cdots )\, dz\ ,
\]
 and assume that  $a_{{\N}-1}$ and   $a_{{\N}}$ are linearly independent over $\R$. Then $\Kappa_0 = -2\pi (n_{\N}-1)$.
\end{lemma}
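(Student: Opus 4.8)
The plan is to show that in this case the surface has a limit tangent plane at the puncture, so that the curves $c_r(t) = re^{it}$ become (asymptotically) round circles traversed with multiplicity $n_{\N}-1$ in that plane, and then read off $\Kappa_0$ from the Gauss--Bonnet definition. First I would normalize: after a real affine change of coordinates in $\R^{\N}$ (which by the remark preceding the definition does not change $\Kappa_0$, nor the top pole order $n_{\N}$), I may assume $a_{\N-1} = (1,0,\dots)$-type and $a_{\N}=(0,1,\dots)$-type directions are the last two standard basis vectors, i.e. the two leading coefficients are orthonormal. Writing $n := n_{\N}$, the dominant term of $f$ as $r\to 0$ is then, in the plane spanned by $e_{\N-1}, e_\N$,
\[
f(z) \sim \re\!\int^z \bigl(z^{-n}e_{\N-1} + i\,z^{-n}e_\N\bigr)\,dz = \frac{r^{1-n}}{n-1}\bigl(\cos((n-1)t)\,e_{\N-1} - \sin((n-1)t)\,e_\N\bigr) + \text{l.o.t.}
\]
so the image of $c_r$ is, to leading order, a circle of radius $\frac{r^{1-n}}{n-1}$ in that coordinate plane, swept out $n-1$ times as $t$ runs over $[0,2\pi]$.

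Next I would make this precise at the level of the first fundamental form. The key computation is that the induced metric, after conformal rescaling by $r^{2(n-1)}$ (or equivalently working with $g_0 = r^{2n}g$ as in the $(0,1,2)$ example), converges as $r\to 0$ — uniformly in $t$ together with first and second derivatives — to the pullback under $(r,t)\mapsto$ (the above circle map) of the flat metric on that plane. Here the hypothesis that $a_{\N-1}, a_\N$ are $\R$-linearly independent is exactly what guarantees the limiting metric is nondegenerate (the two real vectors $\re(a_{\N-1}z^{-n}), \re(a_\N z^{-n})$ span a $2$-plane), so that $f$ remains an immersion in the limit and the geodesic curvature integrand $\eta_r(t) = \kappa_g(t)\,|c_r'(t)|$ converges. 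Since the limiting curve is a round circle of multiplicity $n-1$, $\int_0^{2\pi}\eta_r(t)\,dt \to (n-1)\cdot 2\pi = 2\pi(n-1)$; but $c_r$ is oriented counterclockwise in the $z$-disk while the circle in the plane is swept in the orientation opposite to the surface orientation induced by $f$ (because of the $z^{-n}$, i.e. the map $z\mapsto z^{1-n}$ reverses orientation), which flips the sign, giving $\lim_{r\to 0}\int_{c_r}\kappa_g\,ds = -2\pi(n-1)$. By the Definition of $\Kappa_0$ this is exactly the claim.

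The main obstacle is controlling the lower-order terms uniformly in $t$: one must show that the "$+\cdots$" contributions to the $\omega_k$ (including the other coordinates $\omega_1,\dots,\omega_{\N-2}$ with $n_k \le n$) and to the metric coefficients are genuinely subordinate after the conformal rescaling, including in the derivatives needed to compute $\kappa_g$, and that no cancellation in the leading metric terms occurs — this is where $\R$-linear independence of $a_{\N-1}$ and $a_\N$ is essential and is presumably why the linearly dependent subcase is deferred. Concretely I would expand $g = \langle f_x, f_x\rangle\,dx^2 + 2\langle f_x,f_y\rangle\,dx\,dy + \langle f_y,f_y\rangle\,dy^2$ in powers of $r$, factor out $r^{-2n}$, and check that the leading coefficient matrix is positive definite with bounded inverse; the geodesic curvature formula from Section~\ref{subsec:total} then depends smoothly on these coefficients and their first derivatives, all of which converge, so the integrand converges pointwise and is dominated, and dominated convergence finishes the argument. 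A secondary point to handle carefully is the possible real residues $\res_0\omega_k$: the $\log$-terms they produce in $f$ grow only like $\log r$, hence are negligible against $r^{1-n}$ for $n\ge 2$ (and the case $n=1$ is explicitly excluded here, being treated in Section~\ref{subsec:case3}).
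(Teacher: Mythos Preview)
Your strategy is essentially the paper's: establish a limit tangent plane at the puncture, identify the image of $c_r$ as (asymptotically) a closed curve of winding number $n_{\N}-1$ in that plane, and show the geodesic curvature integrand of $c_r$ converges to the planar curvature integrand of the limit curve. The technical execution differs slightly: you propose to rescale the induced metric by $r^{2(n-1)}$ and show it converges with derivatives, whereas the paper keeps $f$ fixed, projects $c_r$ orthogonally onto the limit tangent plane $T_0$ to obtain a planar curve $\gamma_r$, and shows directly that $\bigl|\kappa_g|c_r'|-\kappa_0|\gamma_r'|\bigr|\to 0$ using the continuity of the projection/rotation operators $Q_z\to Q_0$. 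Both routes work and are about equally long.

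There is one genuine gap in your normalization step. You apply a real affine change of $\R^{\N}$ to force $a_{\N-1}=1$, $a_{\N}=i$, asserting this does not change $\Kappa_0$ ``by the remark preceding the definition.'' But that remark only says affine changes do not affect the \emph{type} of the end; it says nothing about $\Kappa_0$. A non-orthogonal linear map changes the induced metric and hence the geodesic curvature of every $c_r$, and invariance of the limit is not obvious a priori (indeed, that $\Kappa_0$ is affine-invariant is essentially a \emph{consequence} of the theorem, since it equals $-2\pi(n_{\N}-1)$ and $n_{\N}$ is affine-invariant). The paper sidesteps this entirely: it introduces a linear map $L$ only to \emph{write} $f(r,t)=r^{1-n_{\N}}L\bigl(g(r,t)+\cis((n_{\N}-1)t)\bigr)$, but all curvature computations are done for the original $f$; $L$ serves only to identify $T_0$ as $L(x_{\N-1}x_{\N}\text{-plane})$. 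Your argument is easily repaired the same way, or alternatively by restricting to an orthogonal change in $\R^{\N}$ plus a domain rotation $z\mapsto\lambda z$; one then cannot force $a_{\N-1}=1$, $a_{\N}=i$ exactly, so the limiting curve in $T_0$ is an ellipse rather than a round circle, but its total signed curvature is still $-2\pi(n_{\N}-1)$ and the rest of your argument goes through unchanged.
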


\begin{proof}

For $z\in \D^*$, denote by $T_z$ the 2-dimensional tangent plane in $\R^{\N}$ of the surface $f(\D^*)$ at $f(z)$. We orient $T_z$   so that the linear maps $df|_z:\R^2\to T_z$ are orientation preserving. The map $z\mapsto T_z$ is the generalized Gauss map of $f$. We first claim that  the generalized Gauss map extends continuously to $0$.

To see this note that by our assumptions we can find a regular real linear transformation $L$ of  $\R^{\N}$  
such that $L^{-1}\circ f$ is given by meromorphic 1-forms with poles of order $n_{\N}$ only for the last two indices, and the coefficients of their leading terms are $1$ and $i$, respectively. Observe also that our assumptions force $n_d\ne 1$.

Then there is a continuous map $g:\D\to \R^{\N}$ with $g(r,t)\to 0$ for $r\to 0$ such that

\[
f(r,t) =  \frac1{r^{n_{\N}-1}} L\left( g(r,t)+ \cis((n_{\N}-1)t) \right)
\]
where 
\[
\cis(t) =  \left(0,\ldots,0,\cos(t), \sin(t) \right)\in\R^{\N}\ .
\]
Thus the limit tangent plane $T_0$ is the image of the $x_{\N-1}x_{\N}$-plane under $L$. 

Let $\Pi_z:\R^{\N}\to \R^{\N}$ the orthogonal projection onto $T_z$. 
Let $c_r(t)$ be the curve $t\mapsto f(r,t)$ for fixed $r$, and let $\gamma_r = \Pi_0\circ c_r$ be the projection onto the limit tangent plane.

By the expansion above, for small $r$ the planar curve $\gamma_r$ has winding number $n_{\N}-1$ around $0$, hence its total curvature is $-2\pi(n_{\N}-1)$, where the minus sign is dictated by our choice of orientations (it is in fact sufficient to check this in an example). 

We will now show that for $r\to 0$, the total geodesic curvature integrand of $c_r$ converges pointwise on $[0,2\pi]$  to the total curvature integrand of $\gamma_r$.

Recall that the total geodesic curvature integrand of $c_r$ is given by
\[
\eta_r(t)=\kappa_g |c_r'(t)| = \frac{\sk{c_r''}{\dnt_z c_r'}}{|c_r'|^2}
\]
where $\dnt_z$ denotes the $90^\circ$ rotation in the tangent plane $T_z$ of $f(z)$, and the  total curvature integrand of $\gamma_r$ is given by
\[
\kappa_0 |\gamma_r'(t)|=\frac{\sk{\gamma_r''}{\dnt_0 \gamma_r'}}{|\gamma_r'|^2}
\]
where $\dnt_0$ denotes the $90^\circ$ rotation in the limit tangent plane $T_0$. 

For succinctness, denote $Q_z = \dnt_z\circ \Pi_z$ the projection onto the tangent plane $T_z$ followed by the rotation in that 
tangent plane. Observe that $z\mapsto Q_z$ is continuous in all of $\D$, and in particular at $z=0$.

Using this notation, we can write
\begin{align*}
\kappa_0 |\gamma_r'(t)|&=\frac{\sk{\gamma_r''}{\dnt_0 \gamma_r'}}{|\gamma_r'|^2}
\\
&= \frac{\sk{(\Pi_0 c_r)''}{Q_0 c_r'}}{|\gamma_r'|^2}
\\
&= \frac{\sk{ c_r''}{Q_0 c_r'}}{|\gamma_r'|^2} \ .
\end{align*}

Thus

\begin{align*}
\left| \kappa_g |c_r'(t)|  - \kappa_0 |\gamma_r'(t)|\right| &= 
\left|\frac{\sk{c_r''}{\dnt_z c_r'}}{|c_r'|^2}  -  \frac{\sk{ c_r''}{Q_0 c_r'}}{|\gamma_r'|^2}\right|
\\
&= 
\left|\sk{c_r''}{\frac{Q_z c_r'}{|c_r'|^2} -  \frac{Q_0 c_r'}{|\gamma_r'|^2}}\right|
\\
&\le \frac{|c_r''|}{|c_r'|} \left|Q_z  \frac{c_r'}{|c_r'|} -   \frac{|c_r'|^2}{|\gamma_r'|^2}Q_0 \frac{c_r'}{|c_r'|}  \right|
\\
&\le \frac{|c_r''|}{|c_r'|} \left|Q_z  -   \frac{|c_r'|^2}{|\gamma_r'|^2}Q_0 \right| \ .
\end{align*}

As ${|c_r''|}/{|c_r'|}$ is bounded for $r\to 0$ and $Q_z\to Q_0$, it suffices to show that $|c_r'|/|\gamma_r'|\to 1$ for $r\to 0$.
This follows since
\[
\frac{|c_r'|}{|\gamma_r'|}=\frac{|\Pi_z c_r'|}{|\Pi_0 c_r'|}\ ,
\]
 again using that  $\Pi_z\to \Pi_0$.
\end{proof}

We now turn to the  case where still at least two top exponents are equal but all their coefficients are linearly dependent over $\R$.

Write
\[
\omega_{j} = (a_j z^{-n_\N}+ \,\textrm{lower order terms} )\, dz,
\]
where  $a_{j} \in \C$. Then  by assumption all of the (nonzero) $a_{j}$ are real multiples of each other. By making a  coordinate change of the form 
$z\mapsto \lambda z$ for suitable $\lambda\in\C$, we can assume that $a_{\N}=1$. This even holds when $n_\N=1$ as then  $a_{\N}$ has to be real anyway to make the parametrization well-defined.

Thus all of the $a_{j}$ are real, and
\begin{align*}
f(z) &= \begin{pmatrix} a_1 \\ \vdots \\a_{\N} \end{pmatrix} \re \int z^{-n_{\N}} \, dz + \textrm{lower order terms} \\
& = a \re \int z^{-n_{\N}} \, dz + w
\end{align*}
where $a \in \mathbb{R}^{\N}$ and $w$ is the vector of lower order terms.  

Let $L\in O\left(\mathbb{R}^{\N}\right)$ 
be an orthogonal transformation   that maps  $a$ to $(0,\ldots , 0, b)$ for some $b\in \R-\{0\}$.  Then
\[
L(f(z)) = \begin{pmatrix} 0 \\ \vdots \\ 0 \\ b \end{pmatrix} \re\int z^{-n_{\N}} \, dz + Lw
\]
where $Lw$ consists of strictly lower order terms (being a linear combination of lower order terms).  
Clearly, the surfaces given by $f$ and $\tilde f = L\circ f$ have the same total curvature. The forms $\tilde{\omega}_{k}$ for the surface given by $\tilde f$ will satisfy $\tilde{n}_{\N-1} < \tilde{n}_{\N}$ so that we have reduced this special case to the generic case 
that we will discuss in section \ref{subsec:notation}.

\section{Total Geodesic Curvature  for Surfaces in $\R^{\N}$}
\label{subsec:total}

In this section, we recall the formula for the geodesic curvature of surfaces in $\R^{\N}$, adapt it to our situation, and give an elementary estimate.

\begin{definition}
For vectors $X$ and $Y$ in $\R^{\N}$, denote by $X\wedge Y$ the element in the exterior product $\R^{\N}\wedge \R^{\N}$,
equipped with the norm
\[
|X\wedge Y|^2 = |X|^2|Y|^2 - \skp{X}{Y}^2\ .
\]
\end{definition}

Then we have
\begin{lemma}\label{lemma:rotate}
Let $V$ be the 2-dimensional subspace of $\R^n$ spanned by the oriented basis $X$, $Y$. Then the (oriented) 90 degree rotation in $V$ is given by
\[
\dnt U =\frac{-\skp{U}{Y}X + \skp{U}{X}Y}{|X\wedge Y|}\ .
\]
\end{lemma}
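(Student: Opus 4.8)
The plan is to verify directly that the formula defines the correct rotation by checking the three defining properties of the oriented $90^\circ$ rotation $\dnt$ on the plane $V$: namely that (i) $\dnt U$ lies in $V$, (ii) $\dnt$ is an isometry of $V$ with $\dnt^2 = -\mathrm{id}$ (equivalently, $\skp{\dnt U}{U} = 0$ and $|\dnt U| = |U|$ for all $U \in V$), and (iii) $\dnt$ has the correct orientation, i.e. $X, \dnt X$ is a positively oriented basis. Since both $\dnt$ and the right-hand side of the claimed formula are linear in $U$ and vanish on $V^\perp$, it suffices to check everything on the spanning vectors $U = X$ and $U = Y$.

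First I would record the abbreviation $D = |X\wedge Y|$, so $D^2 = |X|^2|Y|^2 - \skp XY^2 > 0$ since $X, Y$ are linearly independent. Property (i) is immediate from the formula, as the right-hand side is visibly a linear combination of $X$ and $Y$. For (ii), plugging in $U = X$ gives $\dnt X = D^{-1}\bigl(-\skp XY X + |X|^2 Y\bigr)$; then $\skp{\dnt X}{X} = D^{-1}\bigl(-\skp XY |X|^2 + |X|^2\skp XY\bigr) = 0$, and a short computation gives $|\dnt X|^2 = D^{-2}\bigl(\skp XY^2 |X|^2 - 2|X|^2\skp XY^2 + |X|^4|Y|^2\bigr) = D^{-2}|X|^2 D^2 = |X|^2$. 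The analogous check for $U = Y$ yields $\dnt Y = D^{-1}\bigl(-|Y|^2 X + \skp XY Y\bigr)$ with $\skp{\dnt Y}{Y} = 0$ and $|\dnt Y|^2 = |Y|^2$. One also checks $\skp{\dnt X}{\dnt Y} = \skp XY$ and $\dnt(\dnt X) = -X$, confirming that $\dnt$ restricts to an orthogonal map on $V$ squaring to $-\mathrm{id}$. Finally, for the orientation in (iii), the matrix of $(X, \dnt X)$ expressed in the basis $(X, Y)$ is $\begin{pmatrix} 1 & -\skp XY/D \\ 0 & |X|^2/D \end{pmatrix}$, which has positive determinant $|X|^2/D > 0$; hence $\dnt X$ is obtained from $X$ by a $+90^\circ$ turn relative to the chosen orientation of $V$, not $-90^\circ$.

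I do not expect any genuine obstacle here: the lemma is an elementary linear-algebra identity, and the only mild subtlety is bookkeeping the sign so that the rotation is the one compatible with the orientation given by the ordered basis $(X, Y)$ rather than its negative. The cleanest presentation is simply to display $\dnt X$ and $\dnt Y$ from the formula and observe that the resulting $2\times 2$ matrix acting on $\mathrm{span}(X,Y)$ is, after accounting for the Gram matrix $\begin{pmatrix} |X|^2 & \skp XY \\ \skp XY & |Y|^2\end{pmatrix}$, an orthogonal matrix of determinant $+1$ with trace $0$ — which pins it down as the positively oriented quarter turn uniquely.
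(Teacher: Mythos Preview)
Your proposal is correct and follows essentially the same approach as the paper: verify directly that the formula gives a length-preserving map orthogonal to the input on the spanning vectors, then check the orientation. The only minor difference is that the paper handles the orientation by reducing (via continuity) to the case where $X,Y$ are orthonormal, whereas you compute the determinant of the change-of-basis matrix for $(X,\dnt X)$ directly; both arguments are equally elementary.
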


\begin{proof}
To see that $\dnt$ is a 90 degree rotation, it suffices to check that $\sk{\dnt X}{X}=0$ and $\sk{\dnt X}{\dnt X}=\sk{X}{X}$.
This is straightforward. To verify that $\dnt$ respects the orientation, it suffices to verify this for $X$ and $Y$ being orthonormal and appealing to continuity.
\end{proof}

\begin{lemma}\label{lemma:gc1}
The geodesic curvature integrand of the curve $c_r(t)=f(r,t)$ is given by 
\[
\eta_r(t)=\kappa_g |c_r'(t)|=\frac{(f_r\cdot f_t)(f_{tt}\cdot f_t)-(f_t\cdot f_t)(f_r\cdot f_{tt})}{|f_r\wedge f_t|(f_t\cdot f_t)} \ .
\]
\end{lemma}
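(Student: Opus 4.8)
The plan is to compute the geodesic curvature of the curve $c_r(t) = f(r,t)$ directly from its definition as a curve lying on the surface $f(\D^*)$, using the ambient coordinates $f_r, f_t, f_{tt}$ at the point $f(r,t)$. Recall that for a curve $c(t)$ on a surface with tangent plane $T$, the geodesic curvature satisfies $\kappa_g |c'|^2 = \langle c'', R^{90} c'\rangle / |c'|$, or more conveniently $\kappa_g|c'| = \langle c'', R^{90} c'\rangle / |c'|^2$, where $R^{90}$ is the oriented $90^\circ$ rotation in the tangent plane. The tangent plane $T_{(r,t)}$ is spanned by the oriented basis $X = f_r$, $Y = f_t$ (this is the orientation convention fixed earlier via $df$ being orientation preserving). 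So the whole computation reduces to substituting $X = f_r$, $Y = f_t$, $U = c_r'$, and $c_r'' = f_{tt}$ into Lemma~\ref{lemma:rotate}.

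The key steps are as follows. First, I observe that $c_r'(t) = f_t$ since $r$ is held fixed, and likewise $c_r''(t) = f_{tt}$. Then $|c_r'|^2 = f_t\cdot f_t$, and the denominator of $\eta_r$ will be $|c_r'|^2$ times whatever denominator appears in $R^{90}$. Applying Lemma~\ref{lemma:rotate} with the oriented basis $(f_r, f_t)$ and $U = f_t$ gives
\[
\dnt f_t = \frac{-(f_t\cdot f_t)\, f_r + (f_t\cdot f_r)\, f_t}{|f_r\wedge f_t|}\ .
\]
Next I pair this with $c_r'' = f_{tt}$:
\[
\sk{f_{tt}}{\dnt f_t} = \frac{-(f_t\cdot f_t)(f_{tt}\cdot f_r) + (f_t\cdot f_r)(f_{tt}\cdot f_t)}{|f_r\wedge f_t|}\ .
\]
Finally, dividing by $|c_r'|^2 = f_t\cdot f_t$ yields
\[
\eta_r(t) = \frac{\sk{f_{tt}}{\dnt f_t}}{|c_r'|^2} = \frac{(f_r\cdot f_t)(f_{tt}\cdot f_t) - (f_t\cdot f_t)(f_r\cdot f_{tt})}{|f_r\wedge f_t|\,(f_t\cdot f_t)}\ ,
\]
which is exactly the claimed formula. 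The only subtlety is making sure the orientation sign is correct, i.e.\ that the oriented basis is $(f_r, f_t)$ and not $(f_t, f_r)$; this is pinned down by the convention that $df$ is orientation preserving, and can be double-checked against a simple example such as a piece of the plane or a round sphere.

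I do not expect a genuine obstacle here — the statement is essentially a reformulation of the standard formula $\kappa_g = \langle c'', \dnt c'\rangle/|c'|^3$ combined with Lemma~\ref{lemma:rotate}. The one point that deserves care, and the closest thing to a ``hard part,'' is bookkeeping the orientation and the placement of the factor $|c_r'|^2 = f_t\cdot f_t$ correctly so that the final expression matches sign and normalization; a secondary point is to note that this derivation implicitly uses that $f$ is an immersion so that $f_r$ and $f_t$ are linearly independent and $|f_r\wedge f_t| \neq 0$, which is part of the standing hypothesis. Everything else is a one-line substitution.
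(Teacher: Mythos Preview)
Your proposal is correct and follows exactly the same approach as the paper: apply Lemma~\ref{lemma:rotate} with $X=f_r$ and $U=Y=f_t$ in the expression $\eta_r(t)=\skp{c_r''}{\dnt_z c_r'}/|c_r'|^2$. The paper's proof states this in one line, and you have simply written out the substitution in detail.
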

\begin{proof}
Use Lemma \ref{lemma:rotate} in the expression for the geodesic curvature integrand
\[
\eta_r(t) = \frac{\skp{c_r''}{\dnt_z c_r'}}{|c_r'|^2}
\]
with $X=f_r$ and $U=Y=f_t$.

\end{proof}

This leads to the following upper bound for $\eta_r(t)$:
\begin{lemma}\label{lemma:gc2}
The geodesic curvature integrand of the curve $c_r(t)=f(r,t)$ has the upper bound
\[
|\eta_r(t)| \leq\frac{|f_t\wedge f_{tt}|}{|f_{t}|^2} \ .
\]
\end{lemma}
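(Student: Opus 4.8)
Starting from the explicit formula in Lemma \ref{lemma:gc1},
\[
\eta_r(t)=\frac{(f_r\cdot f_t)(f_{tt}\cdot f_t)-(f_t\cdot f_t)(f_r\cdot f_{tt})}{|f_r\wedge f_t|\,(f_t\cdot f_t)},
\]
I would first reorganize the numerator to expose a structure that cancels against the $|f_r\wedge f_t|$ in the denominator. The key observation is that the numerator is bilinear in $f_r$ and $f_{tt}$, and in fact it equals $-\skp{f_r}{P f_{tt}}$ times $(f_t\cdot f_t)$, where $P$ is (up to scaling) the orthogonal projection onto the orthogonal complement of $f_t$ inside the span of $f_r$ and $f_t$. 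Concretely, write $f_{tt} = \alpha f_t + f_{tt}^{\perp}$ and $f_r = \beta f_t + f_r^{\perp}$ with $f_{tt}^{\perp}, f_r^{\perp}\perp f_t$; substituting into the numerator, the $f_t$-components drop out and one is left with $-(f_t\cdot f_t)\,\skp{f_r^{\perp}}{f_{tt}^{\perp}}$. Hence
\[
\eta_r(t) = \frac{-\skp{f_r^{\perp}}{f_{tt}^{\perp}}}{|f_r\wedge f_t|/|f_t|^2}\cdot\frac{1}{|f_t|^2}\cdot|f_t|^2,
\]
and since $|f_r\wedge f_t| = |f_r^{\perp}|\,|f_t|$, this simplifies to $\eta_r(t) = -\skp{f_r^{\perp}}{f_{tt}^{\perp}}\,|f_t|/(|f_r^{\perp}|\,|f_t|^2) = -\skp{f_r^{\perp}/|f_r^{\perp}|}{f_{tt}^{\perp}}/|f_t|$.

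Now I would bound $|\eta_r(t)|$ by Cauchy–Schwarz: since $f_r^{\perp}/|f_r^{\perp}|$ is a unit vector orthogonal to $f_t$, we get $|\eta_r(t)| \le |f_{tt}^{\perp}|/|f_t|$, where $f_{tt}^{\perp}$ is the component of $f_{tt}$ orthogonal to $f_t$. It remains to identify $|f_{tt}^{\perp}|$. Writing $f_{tt} = \alpha f_t + f_{tt}^{\perp}$ with $\alpha = \skp{f_{tt}}{f_t}/|f_t|^2$, one computes directly that $|f_{tt}^{\perp}|^2 = |f_{tt}|^2 - \skp{f_{tt}}{f_t}^2/|f_t|^2 = (|f_t|^2|f_{tt}|^2 - \skp{f_t}{f_{tt}}^2)/|f_t|^2 = |f_t\wedge f_{tt}|^2/|f_t|^2$, using the defining norm on $\R^{\N}\wedge\R^{\N}$. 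Therefore $|f_{tt}^{\perp}|/|f_t| = |f_t\wedge f_{tt}|/|f_t|^2$, which is exactly the claimed bound.

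The computation is entirely elementary and the only mild care needed is in the orthogonal-decomposition bookkeeping; there is no real obstacle. One could even shortcut the argument by noting abstractly that $\eta_r$ is, up to sign, the curvature of the planar curve obtained by orthogonally projecting $c_r$ onto $T_z$, and that orthogonal projection onto a 2-plane does not increase the relevant geometric quantities — but the direct substitution above is cleaner to write down and self-contained. I would present the decomposition-and-Cauchy–Schwarz version, with the $\wedge$-norm identity invoked at the end to put the bound in the stated form.
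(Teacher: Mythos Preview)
Your argument is correct and is essentially the paper's proof unpacked: the paper recognizes the numerator as $(f_t\wedge f_r)\cdot(f_t\wedge f_{tt})$ via the Binet--Cauchy identity and then applies Cauchy--Schwarz in $\R^{\N}\wedge\R^{\N}$, while you reach the same inequality by decomposing $f_r$ and $f_{tt}$ into their $f_t$-components and $f_t^\perp$-components and applying Cauchy--Schwarz in $\R^{\N}$. The two are the same computation---your identities $|f_r\wedge f_t|=|f_r^\perp||f_t|$ and $|f_t\wedge f_{tt}|=|f_{tt}^\perp||f_t|$ are exactly what makes Binet--Cauchy plus Cauchy--Schwarz work here---so the only difference is that the paper's version is two lines shorter by invoking the named identity.
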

\begin{proof}
By the Binet-Cauchy identity,
\[
\begin{split}
|\eta_r(t)|&=\frac{|(f_r\cdot f_t)(f_{tt}\cdot f_t)-(f_t\cdot f_t)(f_r\cdot f_{tt})|}{|f_r\wedge f_t|(f_t\cdot f_t)}\\
&=\frac{|(f_t\wedge f_r)\cdot(f_t\wedge f_{tt})|}{|f_t\wedge f_r|(f_t\cdot f_t)}\\
&\leq\frac{|f_t\wedge f_r||f_t\wedge f_{tt}|}{|f_t\wedge f_r|(f_t\cdot f_t)}\\
&=\frac{|f_t\wedge f_{tt}|}{|f_t|^2}  \ .  \\
\end{split}
\]
\end{proof}

\section{Holomorphic change of coordinates}\label{subsec:normal}

In this section, we will normalize $\omega_{\N}$ using a holomorphic change of coordinates in $\D^*$.

For a meromorphic 1-form, the order of its pole and its residue are invariant under conformal diffeomorphisms. It is probably well known
that these are in fact the only invariants, but due to a lack of a reference, we supply a proof below. Here is the precise statement:

\begin{proposition}\label{prop:coordinates}
Let $\alpha$ and $\beta$ be meromorphic 1-forms in the unit disk, with a poles (if any) only  at the origin. Assume that the orders of $\alpha$ and $\beta$ are the same, as well as their residues. 
Then there is a holomorphic diffeomorphism $\phi$ defined in a neighborhood of the origin with $\phi(0)=0$ such that $\phi^*\alpha=\beta$.
\end{proposition}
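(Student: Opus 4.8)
The plan is to reduce everything to model forms by writing $\alpha$ and $\beta$ in terms of their Laurent expansions, normalizing away the principal parts, and then matching the holomorphic remainders. Write $n$ for the common pole order and $c$ for the common residue. After subtracting, $\alpha = \left(\frac{a_{-n}}{z^n}+\cdots+\frac{c}{z}+h_\alpha(z)\right)dz$ and similarly for $\beta$, where $h_\alpha,h_\beta$ are holomorphic at $0$. I would first dispose of the case $n\ge 2$ by a clean trick: if $\alpha=dF$ for a meromorphic $F$ with a pole of order $n-1$ at $0$ and no log term (which is exactly the $\res=0$ part, handled separately — see below), then a conformal diffeomorphism $\phi$ pulling back $\alpha$ to $\beta$ is equivalent to solving $F\circ\phi = G + \text{const}$, i.e. $\phi = F^{-1}\circ(G+\text{const})$; so it suffices to show that any two meromorphic functions with a pole of the same order $n-1\ge 1$ at $0$ differ by post-composition with a local biholomorphism of $(\C,0)$. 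That is a standard normal-form statement: a meromorphic function with a pole of order $m\ge 1$ is, near $0$, of the form $w\mapsto$ (unit)$/w^m$ up to a biholomorphic change of the source, hence all such are mutually related by source biholomorphisms.

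To make that rigorous and to incorporate the residue, I would instead work directly with the ODE $\phi^*\alpha = \beta$, which in coordinates reads $\alpha(\phi(z))\,\phi'(z) = \beta(z)$, i.e. $\phi' = \beta/(\alpha\circ\phi)$. This is a first-order ODE for $\phi$; I would seek a solution with $\phi(0)=0$, $\phi'(0)=$ (a suitable nonzero constant fixed by matching leading coefficients — namely $(a_{-n}^{\alpha}/a_{-n}^{\beta})^{1/n}$, a choice of $n$-th root). Away from $z=0$ this is a regular ODE with holomorphic right-hand side as long as $\phi$ stays near $0$ and off the zero set of $\alpha$; the delicate point is the singularity at $z=0$. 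I would substitute $\phi(z) = \lambda z\,(1+\psi(z))$ with $\psi(0)=0$ to be determined, plug into the equation, and clear the pole: because $\alpha$ and $\beta$ have the same pole order and the same residue, the worst terms (orders $z^{-n}$ through $z^{-1}$, including the logarithmically-detected $z^{-1}$ coefficient) cancel when the leading coefficients are matched, leaving a holomorphic ODE (or a regular-singular one of a benign type) for $\psi$ that can be solved by a standard fixed-point / power-series argument with positive radius of convergence. Concretely, writing $\phi$ as a formal power series $\phi(z)=\lambda z + \sum_{k\ge 2} b_k z^k$ and equating coefficients in $\alpha(\phi)\phi' = \beta$, each $b_k$ for $k\ge 2$ is determined recursively (the $z^{-1}$-coefficient equation is the one forcing equality of residues, and it is automatically satisfied, imposing no constraint on $\phi$ beyond $\phi'(0)=\lambda$); then one checks convergence by a majorant estimate.

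The main obstacle is precisely this convergence / solvability at the singular point: one must verify that matching the pole order and residue is exactly enough to make the reduced equation for $\psi$ regular (no resonance obstructing the recursion, no divergent series). I expect the residue condition to be the subtle ingredient — it is what guarantees that after killing the $z^{-n},\dots,z^{-2}$ terms by choosing $\lambda$ and the next few $b_k$, the residual $z^{-1}$ term also cancels rather than forcing a logarithm into $\phi$, which would be incompatible with $\phi$ being a holomorphic diffeomorphism. Once the formal solution is shown to exist and converge, $\phi'(0)=\lambda\ne 0$ gives that $\phi$ is a local biholomorphism with $\phi(0)=0$, and by construction $\phi^*\alpha=\beta$ in a neighborhood of the origin, completing the proof.
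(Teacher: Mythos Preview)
Your strategy is sound but proceeds differently from the paper. Rather than solving $\phi^*\alpha=\beta$ directly, the paper reduces each form to a fixed \emph{canonical model} depending only on the pole order and residue---$z^n\,dz$, or $\frac{dz}{z}$, or $\bigl(z^{-n}+c\,z^{-1}\bigr)\,dz$---and then composes the two normalizing maps. The first two cases are handled by explicit formulas (an $(n+1)$-th root of a primitive; an exponential of a primitive of the holomorphic part); for the third the paper integrates the pullback equation once, writes $\phi(z)=z\,e^{F(z)}$, and reduces everything to a scalar equation $\frac{1}{1-n}e^{(1-n)F}+Fz^{n-1}=\tilde H(z)$ to which the implicit function theorem applies directly. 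That maneuver buys convergence for free and bypasses the majorant estimate you flag as ``the main obstacle'' but do not carry out. Two small corrections to your recursion: the leading-coefficient match is $\lambda^{\,n-1}=a^{\alpha}_{-n}/a^{\beta}_{-n}$ (not an $n$-th root), since $\phi^*(a\,z^{-n}\,dz)$ has leading term $a\,\lambda^{1-n}z^{-n}\,dz$; and at the $z^{-1}$ step the new unknown $b_n$ drops out entirely (the residue of a pullback is invariant regardless of $\phi$), so the residue hypothesis is a genuine compatibility condition and $b_n$ remains a free parameter rather than being ``determined recursively''---harmless for existence, but worth stating correctly.
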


The proposition follows from the next three lemmas, each of which treats a special case. The first two provide explicit descriptions of $\phi$, while the third uses the implicit function theorem.

\begin{lemma}
\label{lem1a}
Let $\alpha$ be a holomorphic 1-form with zero of order $n\ge0$ at 0. Then there is 
a holomorphic diffeomorphism $\phi$ near 0 such that
\[
\alpha=\phi^*\left(z^n\, dz\right) \ .
\]
\end{lemma}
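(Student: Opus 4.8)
The plan is to construct $\phi$ explicitly by writing $\alpha = h(z)\, dz$ where $h$ is holomorphic near $0$ with a zero of order exactly $n$ at the origin, so $h(z) = z^n u(z)$ with $u(0)\ne 0$. We seek $\phi$ with $\phi(0)=0$ and $\phi'(0)\ne 0$ satisfying
\[
\phi^*(z^n\, dz) = \phi(z)^n \phi'(z)\, dz = h(z)\, dz = z^n u(z)\, dz.
\]
The natural move is to pass to antiderivatives: if $H(z) = \int_0^z \zeta^n u(\zeta)\, d\zeta$ is the local primitive of $\alpha$ vanishing at $0$, and $G(w) = \int_0^w \zeta^n\, d\zeta = w^{n+1}/(n+1)$ is the primitive of $z^n\, dz$, then the equation $\phi^*(z^n\,dz) = \alpha$ is equivalent to $G\circ\phi = H$, i.e. $\phi = G^{-1}\circ H$. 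So the real content is showing $G^{-1}\circ H$ is a well-defined holomorphic diffeomorphism near $0$.

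First I would observe that $H(z) = z^{n+1} v(z)$ for some holomorphic $v$ with $v(0) = u(0)/(n+1) \ne 0$: this is immediate by termwise integration of the power series $z^n u(z)$. Then I would take an $(n+1)$-st root: since $v(0)\ne 0$, on a small disk about $0$ there is a holomorphic branch $v(z)^{1/(n+1)}$, and hence $H(z) = \bigl(z\, v(z)^{1/(n+1)}\bigr)^{n+1}$. Set $\psi(z) = z\, v(z)^{1/(n+1)}$; then $\psi$ is holomorphic near $0$, $\psi(0)=0$, and $\psi'(0) = v(0)^{1/(n+1)} \ne 0$, so $\psi$ is a holomorphic diffeomorphism of a neighborhood of $0$. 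Now define $\phi = c\,\psi$ where $c$ is the constant $(n+1)^{1/(n+1)}$ chosen so that $\phi(z)^{n+1}/(n+1) = \psi(z)^{n+1} = H(z)$, i.e. $G(\phi(z)) = H(z)$. Differentiating this identity gives $\phi(z)^n\phi'(z) = H'(z) = h(z)$, which is exactly $\phi^*(z^n\, dz) = \alpha$. Since $\phi$ is a constant multiple of the diffeomorphism $\psi$, it is itself a holomorphic diffeomorphism fixing $0$, completing the proof.

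I expect the only subtlety — rather than a genuine obstacle — to be the bookkeeping around choosing the branch of the root and the constant $c$: one must check that the same branch used to define $\psi$ makes $G(\phi) = H$ hold identically (not just up to an $(n+1)$-st root of unity), which is automatic here because both sides vanish to the same order with matching leading coefficient once $c$ is fixed, so continuity pins down the branch. A secondary point worth a sentence is that $\alpha$ being only meromorphic in $\D$ with its pole at $0$ is irrelevant to this lemma, whose hypothesis is that $\alpha$ is holomorphic with a zero of order $n\ge 0$; the primitive $H$ then exists on a genuine neighborhood of $0$ (simply connected), so no monodromy issue arises.
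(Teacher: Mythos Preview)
Your proof is correct and follows essentially the same approach as the paper: both take the primitive $H(z)=\int_0^z\alpha$ (the paper calls it $a(z)$), observe it has a zero of order $n+1$, and solve $\frac{1}{n+1}\phi(z)^{n+1}=H(z)$ for $\phi$, verifying $\phi^*(z^n\,dz)=d\bigl(\tfrac{1}{n+1}\phi^{n+1}\bigr)=\alpha$. You simply spell out the extraction of the $(n+1)$-st root more carefully than the paper does.
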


\begin{proof}
The function
\[
a(z) = \int_0^z \alpha
\]
is well-defined near 0 and has a zero of order $n+1$ at 0. Thus we can solve
\[
\frac{1}{n+1} \phi(z)^{n+1} = a(z)
\]
for $\phi(z)$ as a single valued function with a simple zero near the origin.
The claim follows, as 
\[
\phi^*\left(z^n\, dz\right) = d \frac{1}{n+1} \phi(z)^{n+1}\ .
\]
\end{proof}

\begin{lemma}
\label{lem1b}
Let $\alpha$ be a meromorphic 1-form with simple pole of residue 1 at the origin. Then there is 
a holomorphic diffeomorphism $\phi$ near 0 such that
\[
\alpha=\phi^*\left(\frac1z \, dz\right)\ .
\]

\end{lemma}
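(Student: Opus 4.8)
\textbf{The plan} is to construct $\phi$ directly by integrating $\alpha$ and inverting the exponential. The key observation is that a simple pole of residue $1$ at the origin is exactly the logarithmic singularity, so after exponentiating the antiderivative of $\alpha$, the singularity should straighten out into the model $\frac1z\,dz$.

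First I would write $\alpha = \left(\frac1z + h(z)\right)\,dz$ where $h$ is holomorphic near $0$, using that the pole is simple with residue $1$. Let $H(z)=\int_0^z h(w)\,dw$ be the local holomorphic antiderivative of $h$ with $H(0)=0$. Then a local (multivalued) antiderivative of $\alpha$ is $\log z + H(z)$, and the natural candidate is to set
\[
\phi(z) = z\, e^{H(z)}\ .
\]
The point is that $\phi$ is single-valued and holomorphic near $0$ (the troublesome $\log z$ has been exponentiated away against the genuine $z$ factor), with $\phi(0)=0$ and $\phi'(0)=e^{H(0)}=1\ne0$, so $\phi$ is a holomorphic diffeomorphism near the origin by the inverse function theorem.

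Next I would verify the pullback identity. Since $\frac{d\phi}{\phi} = \frac{dz}{z} + dH = \left(\frac1z + h(z)\right)\,dz$, we have directly
\[
\phi^*\!\left(\frac1z\,dz\right) = \frac{d\phi}{\phi} = \left(\frac1z + h(z)\right)\,dz = \alpha\ ,
\]
which is exactly the claim. This computation is the entire content once $\phi$ is in hand.

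\textbf{The main obstacle}, such as it is, is purely a matter of justifying single-valuedness and invertibility of $\phi$ rather than any hard estimate: one must confirm that $e^{H(z)}$ is a genuine holomorphic function (which it is, since $H$ is holomorphic with a removable behavior at $0$) and that multiplying by the simple zero of $z$ produces a diffeomorphism. There are no delicate convergence or residue-matching issues here because the residue is normalized to exactly $1$; if it were a general residue $\rho$, one would instead get a factor $z^\rho$ which is multivalued unless $\rho\in\Z$, but the hypothesis rules this out. Thus the proof is short and explicit, in the spirit of Lemma~\ref{lem1a}.
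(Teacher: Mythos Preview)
Your proof is correct and is essentially identical to the paper's own argument: both write $\alpha=(\tfrac1z+h)\,dz$, set $\phi(z)=z\,e^{H(z)}$ with $H$ a primitive of $h$, check that $\phi$ is a local biholomorphism at $0$, and compute $\phi^*(\tfrac1z\,dz)=\tfrac{d\phi}{\phi}=\alpha$.
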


\begin{proof}
Write 
\[
\alpha=\left(\frac1z+a(z)\right)\, dz
\]
with a holomorphic function $a(z)$ defined near 0, and define
\[
f(z) =e^{\int^z a(z)\,dz} \ .
\]
Then $f$ is holomorphic and non-vanishing near the origin, so 
\[
\phi(z) = z\cdot f(z)
\]
is a holomorphic diffeomorphism near the origin. Then
\[
\phi^*\left(\frac1z \, dz\right) =\left(\frac1z +\frac{f'(z)}{f(z)}\right)\, dz = \left(\frac1z +a(z)\right)\, dz 
\]
as claimed.
\end{proof}

\begin{lemma}
\label{lem1}
Let $\alpha$ be a meromorphic 1-form with pole of order $n>1$ and residue 1 at 0. Then there is a holomorphic diffeomorphism $\phi$ near 0 such that
\[
\alpha=\phi^*\left(\frac1{z^n}+\frac1z\right)\, dz \ .
\]

\end{lemma}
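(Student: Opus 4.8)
The statement to prove is Lemma~\ref{lem1}: a meromorphic 1-form $\alpha$ with pole of order $n>1$ and residue $1$ at $0$ is conformally equivalent to $\left(z^{-n}+z^{-1}\right)dz$. My plan is to reduce this to the implicit function theorem by first splitting off the residue part. Write
\[
\alpha = \left(\frac{1}{z^n} + \sum_{k=-n+1}^{-2} b_k z^{k} + \frac{1}{z} + h(z)\right) dz,
\]
where $h$ is holomorphic near $0$; the coefficient of $z^{-1}$ is $1$ by hypothesis. The idea is that the pole part of order $\ge 2$ can be integrated to a genuine meromorphic function, while the simple pole contributes a $\log$ that must be handled carefully. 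So I would write $\alpha = \alpha_0 + \frac{dz}{z}$ where $\alpha_0$ is meromorphic with pole of order $n$ and \emph{no residue}, hence $\alpha_0 = dG$ for a meromorphic $G$ with a pole of order $n-1$ at $0$; similarly the target form $\beta := \left(z^{-n}+z^{-1}\right)dz = dH + \frac{dz}{z}$ with $H(z) = \frac{1}{1-n}z^{1-n}$.

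\textbf{Main step.} I want a holomorphic diffeomorphism $\phi(z) = z\,u(z)$ with $u(0)\neq 0$ (so that the $dz/z$ terms match up to an exact holomorphic form, which can then be absorbed) such that $\phi^*\beta = \alpha$. Computing,
\[
\phi^*\beta = \frac{1}{(zu)^{n}}\,(u + zu')\,dz + \frac{u + zu'}{zu}\,dz
= \frac{u+zu'}{z^n u^{n}}\,dz + \frac{dz}{z} + \frac{u'}{u}\,dz.
\]
Matching with $\alpha$ means solving the equation
\[
\frac{u+zu'}{z^{n}u^{n}} + \frac{u'}{u} = \frac{1}{z^n} + \sum_{k=-n+1}^{-2} b_k z^k + h(z)
\]
for a holomorphic $u$ with $u(0)\neq 0$. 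This is a first-order ODE-type functional equation; the cleanest route is to set up a map $F$ from the (finite-dimensional, or Banach) space of candidate coordinate changes — equivalently the Taylor coefficients of $u$ up to the relevant order together with a holomorphic remainder — to the space of principal parts, show $F$ is holomorphic, compute its derivative at the ``trivial'' point $u\equiv 1$, and check this derivative is surjective onto the space of principal parts of order $\le n$ with prescribed residue (the residue being automatically preserved, which is the consistency condition). Then the implicit function theorem (or a direct power-series recursion solving for the coefficients of $u$ one at a time, order by order, since at each order the unknown coefficient of $u$ appears linearly with nonzero coefficient) produces $\phi$.

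\textbf{The obstacle.} The delicate point is the interplay between the order-$n$ pole and the residue term: one cannot simply integrate $\alpha$ because of the $\log$, and one cannot ignore the residue because it constrains which $\phi$ can work. So the real content is checking that the linearization of the ``pull-back by $z\mapsto zu(z)$'' operator, acting on $\log u$ or on $u-1$, hits exactly the codimension-one subspace of principal parts one needs (all principal parts of pole order $\le n$ whose residue matches), with the residue direction being the cokernel that is killed by the hypothesis $\res_0\alpha = \res_0\beta = 1$. I would verify this by an explicit term-by-term computation: writing $u = 1 + \varepsilon v$ and expanding to first order in $\varepsilon$, the coefficient of $z^{-n+j}$ in the variation is, up to a nonzero constant, the coefficient of $z^{j}$ in $v$ plus lower-order contributions, for $j = 1, \dots, n-1$ — a triangular linear system — while the $z^{-1}$ coefficient (the residue) is unaffected to first order. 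This triangularity is what makes either the IFT argument or the direct recursion go through, and it is where I would spend the most care; the rest is routine bookkeeping. Finally, having matched $\phi^*\beta$ and $\alpha$ up to an exact holomorphic 1-form $dg$ with $g(0)$ arbitrary, I absorb that discrepancy by composing with the holomorphic diffeomorphism from Lemma~\ref{lem1b}'s technique (adjusting $u$ by the exponential of an antiderivative), or simply note that the holomorphic tail $h$ was already incorporated into the equation above, so no separate absorption is needed.
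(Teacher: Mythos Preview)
Your setup is exactly the paper's: the ansatz $\phi(z)=z\,u(z)$ with $u(0)\ne 0$, the computation of $\phi^*\beta$, and the resulting equation
\[
\frac{u+zu'}{z^{n}u^{n}}+\frac{u'}{u}=\frac{h(z)}{z^{n}}
\]
(in the paper's notation, where $h$ packages the non-residue part) are identical. The difference is only in how this equation is solved. You propose to linearize at $u\equiv 1$ and invoke either a Banach-space implicit function theorem or a term-by-term power-series recursion, relying on the triangularity you correctly identify (and correctly observe that the residue direction is the cokernel, which is why the hypothesis $\res_0\alpha=1$ is exactly the consistency condition). This is sound in principle but leaves the convergence of the formal series, or the precise choice of function spaces, to be filled in. The paper instead exploits that both sides of the displayed equation have \emph{zero residue}, hence possess meromorphic primitives: integrating yields
\[
H(z)=\frac{1}{1-n}\Bigl(\frac{1}{zu}\Bigr)^{n-1}+\log u,
\]
and after the substitution $u=e^{F}$ and multiplication by $z^{n-1}$ this becomes the \emph{holomorphic} equation $\frac{1}{1-n}e^{(1-n)F}+Fz^{n-1}=\tilde H(z)$, to which the ordinary finite-dimensional implicit function theorem for holomorphic functions of $(w,z)$ applies directly at $z=0$. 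The paper's route is shorter and sidesteps any convergence or functional-analytic bookkeeping; your route makes the structural reason for solvability (triangular linearization, residue as the one protected direction) more visible. Your final remark about ``absorbing'' a leftover exact form is unnecessary --- once the equation above is solved exactly, $\phi^*\beta=\alpha$ on the nose, as you yourself note at the end.
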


\begin{proof}
Write 
\[
\alpha = \left(\frac1z+\frac{h(z)}{z^n}\right)\, dz
\]
with a holomorphic function $h$ such that $h(0)\ne 0$. Note that the meromorphic form $\frac{h(z)}{z^n}\, dz$ has no residue
at 0, by assumption.

To find $\phi$, write $\phi(z)=z\cdot f(z)$ with a function $f$ to be determined that satisfies $f(0)\ne0$. It suffices to prove that
\begin{align*} 
   \left(\frac1z+\frac{h(z)}{z^n}\right)\, dz = {}& \phi^*\left(\frac1{z^n}+\frac1z\right)\, dz\\
   = {}& \phi'(z)\left(\frac1{\phi(z)^n}+\frac1{\phi(z)}\right)\, dz\\
   = {}& \left(\frac{f+z f'}{z^n f^n}+\frac1z+\frac{f'}{f}\right)\, dz \ . \\
\end{align*}

This is equivalent to 
\[
\frac{h(z)}{z^n} = \frac{f+z f'}{z^n f^n}+\frac{f'}{f} \ .
\]

Let $H(z)$ a primitive of $\frac{h(z)}{z^n}$. This is a meromorphic function of pole order $n-1$, since, as we noted, $\frac{h(z)}{z^n}\, dz$ has no residue
at 0.

Observe that the right hand side of the previous equation has an explicit primitive, it thus suffices to solve
\[
H = \frac1{1-n} \left(\frac{1}{zf}\right)^{n-1} + \log f \ .
\]
Write $f=e^F$ for some holomorphic function $F$ do be determined. This is possible as we require $f(0)\ne0$. Then we get to solve
\[
\frac1{1-n} \left(\frac{1}{ze^F}\right)^{n-1} + F = H
\]
or,
\[
\frac1{1-n}  e^{(1-n)F} + F z^{n-1}= \tilde H
\]
where $\tilde H$ is holomorphic and does not vanish near 0.
To solve this equation using the implicit function theorem write
\[
T(w,z) = \frac1{1-n}  e^{(1-n)w} + w z^{n-1}- \tilde H(z) \ .
\]
Then $T(w,0)=0$ is equivalent to 
\[
 \frac1{1-n}  e^{(1-n)w} = \tilde H(0)
\]
which has a solution $w_0$, as $\tilde H(0)\ne0$.
Furthermore,
\[
T_w(w,z) =   e^{(1-n)w} +  z^{n-1}
\]
which is non-zero for any $w$ and $z=0$. Thus there is a unique function $w= F(z)$, holomorphic at $0$, with $w_0=F(0)$ that solves
our problem.
\end{proof}

\section{Case II: Different Top Exponents, $n_\N\ne 1$ --- Notation}
\label{subsec:notation}

To evaluate the geodesic curvature integral in Lemma \ref{lemma:gc1}, we will proceed in several steps. 

We first use the normalized 1-forms to compute the integrand in polar coordinates $z=r e^{i t}$, sorted by powers of $r$ so that the  coefficients of the highest powers of $r$ are not identically vanishing in $t$.  

We will see that away from certain explicit values of $t$, this integrand converges uniformly to $0$ for $r\to 0$. At the remaining special values of $t$ where the integrand becomes singular, we use a blow-up argument $f(r, r^n t)$ with a suitable power $n$ to evaluate the limit of the total curvature integral.

As noted in section \ref{subsec:case1}, we will assume that $n_{\N-1} < n_{\N}$.  Before analyzing the geodesic curvature for this case, we illustrate the procedure with an end of type $(0,1,3)$.

\begin{example}
Consider the end of type $(0,1,3)$ with 
\[
\left(\omega_1,\omega_2,\omega_3\right)=\left(i,\frac{1}{z},\frac{1}{z^3}+\frac{1}{z^2}\right)dz \ .
\]
Analyzing the geodesic curvature is easier after normalizing the one-forms.  Applying the holomorphic diffeomorphism 
\[
\phi(z)=z^2-z\sqrt{1+z^2}
\]
to the above one-forms, we obtain the normalized one-forms
\[
\begin{split}
\psi_1&=\phi^*(\omega_1)=\left(-i+2i z-\frac{3iz^2}{2}+\frac{5iz^4}{8}+O(z)^5\right)dz\\
\psi_2&=\phi^*(\omega_2)=\left(\frac{1}{z}-1+\frac{z^2}{2}-\frac{3z^4}{8}+O(z)^5\right)dz\\
\psi_3&=\phi^*(\omega_3)=\frac{1}{z^3}dz \ . \\
\end{split}
\]
\begin{figure}[h]
	\centerline{ 
		\includegraphics[height=2in]{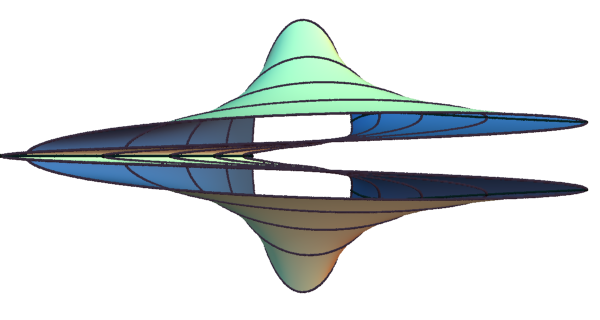}
	}
	\caption{Normalized end of type $(0,1,3)$}
	\label{figure:case013}
\end{figure}

The geodesic curvature integrand is given by 
 

\[
\begin{split}
\eta_r(t)=\kappa_g(t)|c_r'(t)|&=\frac{-r^3\left(4\cos{t}\sin^4{t}+\frac{1}{2}(1+3\cos(4t))r+O(r^2)\right)}{\sqrt{\sin^2(2t)+(-\cos{t}+\cos(5t))r+r^2+O(r^3)}\left(\sin^2(2t)+r^6+O(r^7)\right)}\\
\end{split}
\]

This is bounded for $r\to 0$ unless $\sin(2t)=0$. Away from open neighborhoods of $\sin(2t)=0$, the geodesic curvature integrand converges uniformly to 0 for $r\to 0$.  We use a blowup to evaluate the improper integral of the geodesic curvature at each singularity.

\begin{figure}[h]
	\centerline{ 
		\includegraphics[height=1.9in,width=4in]{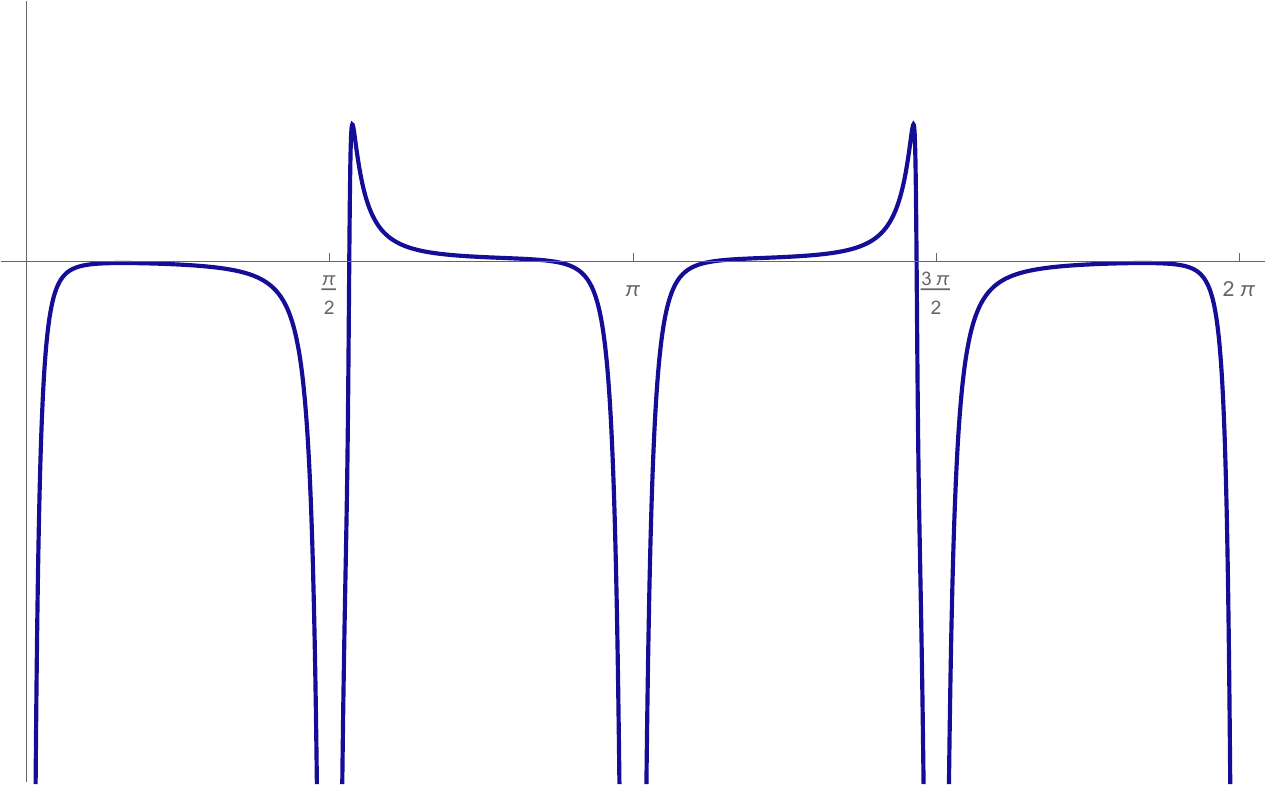}
	}
	\caption{Graph of $\eta_r(t)$ for small $r$}
	\label{figure:case013b}
\end{figure}

If $\alpha\in\{0,\pi/2,\pi,3\pi/2\}$ then 
\[
r^3\eta_r(r^3s+\alpha)=\frac{-2+O(r)}{\sqrt{1+O(r)}(4t^2+1+O(r))}
\]
and
\[
\begin{split}
\lim_{r\rightarrow 0^+}\int_0^{2\pi}\eta_r(t)dt&=\sum_{\alpha}\lim_{r\rightarrow 0^+}\int_{\alpha-\epsilon}^{\alpha+\epsilon}\eta_r(t)dt\\
&=\sum_{\alpha}\lim_{r\rightarrow 0^+}\int_{-\epsilon}^{\epsilon}r^3\eta_r(r^3 s+\alpha)ds\\
&=\sum_{\alpha}\lim_{r\rightarrow 0^+}\int_{-\infty}^{\infty}r^3\eta_r(r^3 s+\alpha)\chi_{(-\epsilon/r^3,\epsilon/r^3)}(s)ds\\
&=\sum_{\alpha}\int_{-\infty}^{\infty}\frac{-2}{4s^2+1}ds\\
&=-4\pi
\end{split}
\]
where $\displaystyle \chi_{(a,b)}$ is the characteristic function on $(a,b)$.
\end{example}

Now, consider general examples with $n_{\N-1}<n_\N$ and $n_\N\neq 1$.  Using Proposition \ref{prop:coordinates}, we can normalize the last coordinate 1-form $\omega_{\N}$.  Thus, we will assume that the ${\N}$ coordinate 1-forms are given by
\[
\begin{split}
\omega_k(z)&=\sum_{j=1}^{\infty}r_{k,j}e^{it_{k,j}}z^{j-1-n_k}dz,\hspace{.1in}1\leq k\leq {\N}-1\\
\omega_{\N}(z)&=\left(z^{-n_\N}+\frac{r_{{\N},n_{\N}}}{z}\right)dz\\
\end{split}
\]
where $n_k\leq n_{k+1}$ for $k=1,2,\ldots,{\N}-2$, $n_{{\N}-1}<n_{\N}$.

Additionally, we can assume that 
\[
h_k(z)=z^{n_k}\omega_k(z)/dz=\sum_{j=1}^{\infty}r_{k,j}e^{it_{k,j}}z^{j-1}
\]
is a convergent power series in the unit disk for $1\leq k\leq {\N}-1$.  Thus, there is a constant $M>0$ such that $r_{k,j}\leq M$ for $j\geq 1$ and $1\leq k\leq {\N}-1$. 

We also assume that $\sin(t_{k,n_k})=0$ for $k=1,2,\ldots,{\N}-1$ so that the residues of $\omega_k$ are real as required for a well-defined harmonic map in $\D^*$.  
Then we have power series expansions
\[
f(r,t)=\left(\begin{split}
&\sum_{j=1,j\neq n_1}^{\infty}r^{j-n_1}\alpha_{1,j}(t)+\alpha_{1,n_1}\log{r}\\
&\hspace{1.25in}.\\
&\hspace{1.25in}.\\
&\hspace{1.25in}.\\
&\sum_{j=1,j\neq n_{{\N}-1}}^{\infty}r^{j-n_{{\N}-1}}\alpha_{{\N}-1,j}(t)+\alpha_{{\N}-1,n_{{\N}-1}}\log{r}\\
&r^{1-n_{\N}}\beta(t)+r_{{\N},n_{\N}}\log{r}\\
\end{split}\right)
\]
where for $k=1,\ldots,{\N}-1$,
\begin{equation*}
\alpha_{k,j}(t)=
\begin{cases}
\frac{r_{k,j}\cos{((j-n_k)t+t_{k,j})}}{j-n_k},  & j\neq n_k
\\
r_{k,n_k}\cos{(t_{k,n_k})}, & j=n_k
\end{cases}
\end{equation*}
and
\[
\beta(t)=\frac{\cos((1-n_{\N})t)}{1-n_{\N}} \ .
\]
Observe that $\alpha_{k,j}''(t)=-(j-n_k)^2\alpha_{k,j}(t)$ and $\beta''(t)=-(1-n_{\N})^2\beta(t)$. 
Also note that $|\alpha_{k,j}(t)|\le M$.

Our next goal is to derive similar expansions for the relevant terms in $\eta_r(t)$ from Lemma \ref{lemma:gc1}.
To simplify the notation, we abbreviate a few sums as follows: 

\begin{definition}\label{def:STR}
For $k=1,\ldots,{\N}-1$, let 
\[
\begin{split}
S_k(r,t)&=\sum_{j=1,j\neq n_k}^{\infty}r^{j-1}\alpha_{k,j}'(t)\\
T_k(r,t)&=-S_k'(r,t)=\sum_{j=1}^{\infty}r^{j-1}(j-n_k)^2\alpha_{k,j}(t)\\
R_k(r,t)&=\sum_{j=1}^{\infty}r^{j-1}(j-n_k)\alpha_{k,j}(t)+r^{n_k-1}\alpha_{k,n_k} \ .\\
\end{split}
\]
\end{definition}
Then we have by straightforward computation:

\begin{lemma}\label{lemma:gcterms}
\[
\begin{split}
f_t&=\frac{1}{r^{n_{\N}-1}}\left(r^{n_{\N}-n_1}S_1(r,t),\ldots,r^{n_{\N}-n_{{\N}-1}}S_{{\N}-1}(r,t),\beta'(t)\right)\\
f_{tt}&=-\frac{1}{r^{n_{\N}-1}}\left(r^{n_{\N}-n_1}T_1(r,t),\ldots,r^{n_{\N}-n_{{\N}-1}}T_{{\N}-1}(r,t),(1-n_{\N})^2\beta(t)\right)\\
f_r&=\frac{1}{r^{n_{\N}}}\left(r^{n_{\N}-n_1}R_1(r,t),\ldots,r^{n_{\N}-n_{{\N}-1}}R_{{\N}-1}(r,t),(1-n_{\N})\beta(t)+r^{n_{\N}-1}r_{{\N},n_{\N}}\right) \ .\\
\end{split}
\]
\end{lemma}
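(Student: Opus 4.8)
The plan is to prove Lemma~\ref{lemma:gcterms} by differentiating the displayed power series for $f(r,t)$ term by term, once and twice in $t$ and once in $r$, and then reading off $S_k$, $T_k$, $R_k$, and $\beta$ from the results. Term-by-term differentiation is legitimate on every set $\{\rho_0\le r\le \rho_1\}\times[0,2\pi]$ with $0<\rho_0\le\rho_1<1$: by assumption the $h_k$ converge in the full unit disk, so each component series together with all of its formal $\partial_r$- and $\partial_t$-derivatives converges absolutely and uniformly there, the extra factors produced by differentiation being dominated by a fixed power of $r$ times a polynomial in $j$ (and the $\log r$ terms being smooth in $r$ away from $0$). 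Hence the differentiated series represent the genuine partial derivatives of $f$, and the rest is bookkeeping.

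For $f_t$: differentiating the $k$-th coordinate ($1\le k\le \N-1$) in $t$ annihilates the constant $\alpha_{k,n_k}\log r$ term and leaves $\sum_{j\ne n_k}r^{j-n_k}\alpha_{k,j}'(t)=r^{1-n_k}\sum_{j\ne n_k}r^{j-1}\alpha_{k,j}'(t)=r^{1-n_k}S_k(r,t)$, and writing $r^{1-n_k}=r^{1-n_\N}\cdot r^{n_\N-n_k}$ reproduces the $k$-th entry of the claimed vector; the last coordinate gives $r^{1-n_\N}\beta'(t)$ at once. Applying $\partial_t$ once more and using $\alpha_{k,j}''(t)=-(j-n_k)^2\alpha_{k,j}(t)$ yields $-r^{1-n_k}\sum_{j\ne n_k}r^{j-1}(j-n_k)^2\alpha_{k,j}(t)$; since the omitted index $j=n_k$ carries coefficient $(j-n_k)^2=0$, the sum over $j\ne n_k$ equals the sum over all $j\ge1$, i.e.\ $-r^{1-n_k}T_k(r,t)$, and the last coordinate becomes $-r^{1-n_\N}(1-n_\N)^2\beta(t)$ via $\beta''=-(1-n_\N)^2\beta$. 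Reinserting the factorizations of $r^{1-n_k}$ and $r^{1-n_\N}$ gives the stated $f_{tt}$.

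For $f_r$: differentiating the $k$-th coordinate in $r$ gives $\sum_{j\ne n_k}(j-n_k)r^{j-n_k-1}\alpha_{k,j}(t)+\alpha_{k,n_k}r^{-1}=r^{-n_k}\bigl(\sum_{j\ne n_k}(j-n_k)r^{j-1}\alpha_{k,j}(t)+r^{n_k-1}\alpha_{k,n_k}\bigr)$. The $j=n_k$ term of the sum vanishes, so the bracket is precisely $R_k(r,t)$, and $r^{-n_k}=r^{-n_\N}\cdot r^{n_\N-n_k}$ matches the claim. In the last coordinate, the $\partial_r$-derivative of $r_{\N,n_\N}\log r$ produces $r_{\N,n_\N}r^{-1}$, which together with $(1-n_\N)r^{-n_\N}\beta(t)$ gives $r^{-n_\N}\bigl((1-n_\N)\beta(t)+r^{n_\N-1}r_{\N,n_\N}\bigr)$, as asserted.

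I do not expect a serious obstacle: the computation is mechanical once the two sources of ``extra'' terms are kept straight --- the $\log r$ terms, which disappear under $\partial_t$ but contribute the tails $r^{n_k-1}\alpha_{k,n_k}$ and $r^{n_\N-1}r_{\N,n_\N}$ under $\partial_r$, and the index $j=n_k$, which is harmlessly excluded from $S_k$ but may be re-included in $T_k$ and $R_k$ because its coefficient is $0$. The only genuine point that needs stating is the justification of term-by-term differentiation, and that is immediate from the convergence of the $h_k$ on the unit disk recorded above.
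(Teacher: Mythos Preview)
Your proposal is correct and is exactly the ``straightforward computation'' the paper alludes to without spelling out; you have simply supplied the details (term-by-term differentiation, the vanishing of the $j=n_k$ contributions in $T_k$ and $R_k$, and the role of the $\log r$ terms) that the paper leaves to the reader. There is no difference in approach.
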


\section{The Blow-Up Argument}\label{subsec:blowup}

Recall from Lemma \ref{lemma:gc2} that the geodesic curvature integrand is bounded above by
\[
|\eta_r(t)| \le  \frac{|f_t\wedge f_{tt}|}{ |f_t|^2}\ .
\]
By Lemma \ref{lemma:gcterms}, this is bounded for $r\to 0$ unless $\beta'(t)=0$. Moreover, the formula for $\eta_r(t)$ in Lemma \ref{lemma:gc1} together with Lemma \ref{lemma:gcterms} shows that away from open neighborhoods of $\beta'(t)=0$, the integrand converges uniformly to 0 for $r\to 0$. 

We will now analyze the singular behavior of the geodesic curvature integrand.
Suppose $\beta'(t_0)^2=0$.  Then $\beta(t_0)^2=\frac{1}{(1-n_3)^2}\neq 0$, and again  Lemma \ref{lemma:gc1}  and Lemma \ref{lemma:gcterms} imply that  our integrand does indeed have a singularity at $t_0$ when $r\to 0$.
We will cope with these singularities using a blow-up argument.

Since $\beta'(t)=-\sin((1-n_{\N})t)$, these singularities are explicit. It will turn out that  in our choice of coordinate they all will contribute the same amount to the total curvature integral. 
We will consider the case $t_0=0$. The other cases are notationally more complicate but are treated the  same way.

The idea is to make a substitution of the form $(r,t)\mapsto (r,r^n t)$ for a suitable exponent $n$. Our next goal is to determine that exponent by the power of $r$  by which the $S_k$ terms  possibly converge to 0 when $r\to 0$.

\begin{definition}\label{def:mk}
Let $1\leq k\leq {\N}-1$.
\noindent
Define $m_k$ as the first integer such that 

$r_{k,m_k}\sin(t_{k,m_k})\neq 0$.  That is, $m_k-1$ is the order of the zero of $S_k(r,0)$ at $r=0$.  
\end{definition}
Then we expand

\begin{equation}
\label{eqn:Sk}
\begin{split}
S_k(r,t)=&-\sum_{j=1,j\neq n_k}^{m_k-1}r^{j-1}r_{k,j}\sin{((j-n_k)t)}\cos(t_{k,j})\\
&-r^{m_k-1}\sum_{j=m_k,j\neq n_k}^{\infty}r^{j-m_k}r_{k,j}\left[\sin{(j-n_k)t)}\cos{(t_{k,j})}+\cos{((j-n_k)t)}\sin{(t_{k,j})}\right] \ .\\
\end{split}
\end{equation}
If $m_k=\infty$ then 
\begin{equation}
\label{eqn:Sk2}
S_k(r,t)=-\sum_{j=1,j\neq n_k}^{\infty}r^{j-1}r_{k,j}\sin{((j-n_k)t)}\cos(t_{k,j}) \ .\\
\end{equation}
This allows us to determine the critical exponent $n$ as well as to introduce abbreviations which will be used in our estimate of the geodesic curvature integrand:
 
\[
\begin{split}
n&=\min_k\{n_\N-n_k+m_k-1\}\\
k^*&=\{k\in\{1,\ldots,{\N}-1\}:n_\N-n_k+m_k-1=n\}\\
a&=\frac{n_{\N}-1}{\sqrt{b}}\\
\end{split}
\]
with 
\[
b=\sum_{k\in k^*}r_{k,m_k}^2\sin^2(t_{k,m_k}) \ .
\]
Note that as $f$ is assumed to be an immersion, not all of the $m_k$ can be infinite, so that $n$ is finite.

\begin{proposition}
\label{prop:blowup} 
With the notation introduced above,
\[
\begin{split}
\lim_{r\rightarrow 0}r^n \eta_r(r^nt)&=-\frac{a}{1+a^2t^2} \ .\\
\end{split}
\]
\end{proposition}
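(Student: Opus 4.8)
The plan is to substitute $t\mapsto r^n t$ into the explicit formula for $\eta_r(t)$ from Lemma \ref{lemma:gc1}, insert the expansions of $f_t$, $f_{tt}$, $f_r$ from Lemma \ref{lemma:gcterms}, and track the leading power of $r$ in each of the three "slots" (the sums $S_k,T_k,R_k$ and the $\beta$-terms). The key observation driving the choice of $n$ is that near $t_0=0$ we have $\beta'(r^n t)=-\sin((1-n_\N)r^n t)\sim (n_\N-1) r^n t$, so the last coordinate of $f_t$ becomes $O(r^n)$-small, which is precisely the order at which the $S_{k}$ terms for $k\in k^*$ survive: by \eqref{eqn:Sk}, $S_k(r,r^nt)$ has a term of size $r^{m_k-1}\cdot r^n\cdot(\text{stuff})$ from the $j=m_k$, $\sin$-part expanded to first order in $t$, plus the genuinely $r^{m_k-1}$-sized $\sin(t_{k,m_k})$ constant — wait, more carefully, the constant $\sin(t_{k,m_k})$ piece in the bracket is $O(r^{m_k-1})$ while all the $\sin((j-n_k)r^nt)$ pieces are $O(r^{m_k-1+n})$. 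So to leading order $S_k(r,r^nt)\approx -r^{m_k-1}r_{k,m_k}\sin(t_{k,m_k})$ for every $k$ with $m_k$ finite, and after multiplying by the prefactor $r^{n_\N-n_k}$ from Lemma \ref{lemma:gcterms}, the slot-$k$ contribution is $O(r^{n_\N-n_k+m_k-1})=O(r^{n})$ exactly for $k\in k^*$ and higher order otherwise.

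Concretely, I would factor $r^{-(n_\N-1)}$ out of $f_t$ and out of $f_{tt}$, and $r^{-n_\N}$ out of $f_r$, and then further factor $r^n$ out of the "first $\N-1$ coordinates taken together" part, writing
\begin{align*}
f_t&=\frac{1}{r^{n_\N-1}}\Bigl(r^n(\text{vector}\to -(\ldots,r_{k,m_k}\sin(t_{k,m_k}),\ldots)_{k\in k^*}+o(1)),\ \beta'(r^nt)\Bigr),\\
\beta'(r^nt)&=r^n(n_\N-1)t+o(r^n).
\end{align*}
Thus $r^{n_\N-1-n}f_t\to \bigl(-(\text{const vector } v),\ (n_\N-1)t\bigr)$ with $|v|^2=b$. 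Doing the same for $f_{tt}$: the first $\N-1$ slots are $T_k(r,r^nt)=-S_k'(r,r^nt)$ which carries an extra factor of $(j-n_k)\cdot r^n$ from differentiating the $\sin$, so those slots are $O(r^n\cdot r^n)$ — subdominant — while the last slot is $-(1-n_\N)^2\beta(r^nt)\to -(n_\N-1)^2\cdot\frac{1}{1-n_\N}=(n_\N-1)$ (a nonzero constant, NOT small). So $r^{n_\N-1}f_{tt}\to(0,\ldots,0,n_\N-1)$. Similarly $r^{n_\N}f_r\to(0,\ldots,0,1-n_\N)$ in the last slot (the $r_{\N,n_\N}\log r$ term and the $R_k$ terms being lower order). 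Then I plug these limits into
\[
\eta_r(t)=\frac{(f_r\cdot f_t)(f_{tt}\cdot f_t)-(f_t\cdot f_t)(f_r\cdot f_{tt})}{|f_r\wedge f_t|\,(f_t\cdot f_t)},
\]
being careful that $f_r\cdot f_t$ and $f_t\cdot f_t$ each have their own leading power of $r$ that must be computed from the scaled limits, and that $|f_r\wedge f_t|$ (a genuine area, nonzero since $f$ is an immersion) also scales. Counting powers: $f_t\cdot f_t=r^{-2(n_\N-1)}(r^{2n}b+r^{2n}(n_\N-1)^2t^2+o(r^{2n}))$; $f_r\cdot f_{tt}=r^{-(2n_\N-1)}((1-n_\N)(n_\N-1)+o(1))$; $f_r\cdot f_t=r^{-(2n_\N-1)}\cdot r^n(1-n_\N)(n_\N-1)t+o(\cdot)$ from the last-slot product (first slots give $r^{-(2n_\N-1)}r^{2n}(\ldots)$, subdominant); $f_{tt}\cdot f_t=r^{-2(n_\N-1)}r^n(n_\N-1)^2 t+o(\cdot)$; and $|f_r\wedge f_t|=r^{-(2n_\N-1)}\cdot r^n\sqrt{b}\,(n_\N-1)+o(\cdot)$ — this last one is the delicate bit, since the cross term $v$ in $f_t$ and the last-slot direction of $f_r$ are orthogonal, the wedge norm picks up $|v|$ times the last component of $f_r$ to leading order. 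Assembling, the numerator is $r^{-(4n_\N-3)}r^n(-b(1-n_\N)(n_\N-1))+o(\cdot)$ (the $(f_r\cdot f_t)(f_{tt}\cdot f_t)$ piece being $O(r^{2n})$ smaller), the denominator is $r^{-(2n_\N-1)}r^n\sqrt b(n_\N-1)\cdot r^{-2(n_\N-1)}r^{2n}(b+(n_\N-1)^2t^2)$; so $r^n\eta_r(r^nt)$ has all $r$'s cancel and limits to $\frac{-b(1-n_\N)(n_\N-1)}{\sqrt b(n_\N-1)(b+(n_\N-1)^2t^2)}=\frac{(n_\N-1)\sqrt b}{b+(n_\N-1)^2t^2}=\frac{a}{1+a^2t^2}$ up to sign, where $a=(n_\N-1)/\sqrt b$; the sign comes out negative with the orientation conventions fixed earlier, giving $-\frac{a}{1+a^2t^2}$ as claimed.

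The main obstacle I anticipate is bookkeeping the $o(1)$ remainders rigorously and uniformly: each of $S_k,T_k,R_k,\beta,\beta'$ must be expanded to one order beyond leading and the error terms shown to vanish as $r\to 0$ for fixed $t$ (pointwise in $t$ is all that's claimed here, which helps). In particular I must verify that after the blow-up $t\mapsto r^nt$, the tail sums $\sum_{j\ge m_k}r^{j-m_k}(\ldots)$ remain bounded — this uses the uniform bound $r_{k,j}\le M$ and $|\alpha_{k,j}|\le M$ from Section \ref{subsec:notation}, together with $|\sin|,|\cos|\le 1$ and (for the blown-up argument of $\sin$) the bound $|\sin(x)|\le|x|$ — and that dividing by $|f_r\wedge f_t|$ is legitimate, i.e. this area is bounded away from $0$ after rescaling; this is exactly where the immersion hypothesis and the nonvanishing of $b$ (which is why $k^*\ne\emptyset$, ensured since not all $m_k$ are infinite) enter. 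The genuinely subtle cancellations — that the $(f_r\cdot f_t)(f_{tt}\cdot f_t)$ term in the numerator is strictly lower order than $(f_t\cdot f_t)(f_r\cdot f_{tt})$, and that the first-$(\N-1)$-coordinates' contributions to $f_r\cdot f_t$ and to the wedge are lower order — all reduce to the single power count $n_\N-n_k+m_k-1\ge n$ with equality iff $k\in k^*$, so once that is in place the rest is careful but routine algebra. I would organize the write-up by first recording the one-term-beyond-leading expansions of the scaled quantities $r^{n_\N-1-n}f_t(r,r^nt)$, $r^{n_\N-1}f_{tt}(r,r^nt)$, $r^{n_\N}f_r(r,r^nt)$ as three displayed limits, then substituting into the Lemma \ref{lemma:gc1} formula and taking the limit of the resulting rational expression.
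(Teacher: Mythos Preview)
Your approach is exactly the paper's: expand $S_k$, $T_k$, $R_k$, $\beta$, $\beta'$ to leading order after the substitution $t\mapsto r^nt$, compute each dot product appearing in Lemma~\ref{lemma:gc1}, and take the limit of the resulting rational expression. However, three concrete errors in the execution need to be fixed.

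First, the last component of $r^{n_\N}f_r$ limits to $(1-n_\N)\beta(0)=1$, not to $1-n_\N$. The spurious factor $1-n_\N$ then propagates into your expressions for $f_r\cdot f_t$, $f_r\cdot f_{tt}$, and $|f_r\wedge f_t|$, and is precisely the source of the sign discrepancy you wave away at the end (``the sign comes out negative with the orientation conventions''). The sign is determined by the algebra, not by a convention; with the correct value the limit is $-a/(1+a^2t^2)$ directly.

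Second, and more importantly, the two pieces of the numerator, $(f_r\cdot f_t)(f_{tt}\cdot f_t)$ and $(f_t\cdot f_t)(f_r\cdot f_{tt})$, are of the \emph{same} order $r^{2n}/r^{4n_\N-3}$; neither is subdominant. The reason the numerator collapses to a multiple of $b$ alone (with no $t^2$ term) is a genuine cancellation: each piece separately carries a term proportional to $(1-n_\N)^3 t^2$, and these cancel in the difference. Your stated justification (one piece being ``$O(r^{2n})$ smaller'') would, taken literally, leave the $(1-n_\N)^2t^2$ from $f_t\cdot f_t$ in the numerator, and the limit of $r^n\eta_r(r^nt)$ would then be the constant $-a$, independent of $t$, rather than $-a/(1+a^2t^2)$. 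You must keep both pieces and exhibit the cancellation. (Incidentally, the overall power of $r$ in the numerator is $r^{2n}$, not $r^n$.)

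Third (minor), your reason for the first $\N-1$ slots of $f_{tt}$ being negligible is incorrect: $T_k(r,r^nt)\to(1-n_k)r_{k,1}\cos(t_{k,1})$, which is generically nonzero. These slots are small in $r^{n_\N-1}f_{tt}$ only because of the prefactor $r^{n_\N-n_k}$ from Lemma~\ref{lemma:gcterms}, not because $T_k$ itself is small.

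With these corrections your argument coincides with the paper's proof.
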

\begin{proof}
The terms from Lemma \ref{lemma:gcterms} can be estimated using
\[
\begin{split}
S_k(r,r^nt)&=
\begin{cases}
-r^{m_k-1}r_{k,m_k}\sin{(t_{k,m_k})}+O(r^{m_k}), \hspace{.15in}m_k<\infty
\\
r_{k,1}(n_k-1)\cos(t_{k,1})r^nt+O(r^{n+1}), \hspace{.1in}m_k=\infty
\end{cases}\\
\beta'(r^nt)&=r^{n}t(n_{\N}-1)+O(r^{n+1})\\
T_k(r,r^nt)&=(1-n_k)r_{k,1}\cos(t_{k,1})+O(r)\\
R_k(r,r^nt)&=r_{k,1}\cos(t_{k,1})+O(r)\\
\beta(r^nt)&=\frac{1}{1-n_{\N}}+O(r)\ . \\
\end{split}
\]
Then, by straightforward computation,

\begin{align*}
f_t\cdot f_t&=
\frac{r^{2n}}{r^{2(n_{\N}-1)}}\left[b+(1-n_{\N})^2t^2+O(r)\right]
\\
f_r\cdot f_{tt}&=-\frac{1}{r^{2n_{\N}-1}}\left[1-n_{\N}+O(r)\right]
\\
f_r\cdot f_t&=
-\frac{r^n}{r^{2n_{\N}-1}}\left[(1-n_{\N})t+O(r)\right]
\\
f_{tt}\cdot f_t&=
\frac{r^n}{r^{2(n_{\N}-1)}}\left[(1-n_{\N})^2t+O(r)\right]
\\
f_r\cdot f_r&=
\frac{1}{r^{2n_{\N}}}\left[1+O(r)\right] \ .
\\
\end{align*}

Combining everything gives
\[
\begin{split}
(f_r\cdot f_t)(f_{tt}\cdot f_t)-(f_t\cdot f_t)(f_r\cdot f_{tt})&=\frac{r^{2n}}{r^{4n_{\N}-3}}\left[(1-n_{\N})b+O(r)\right]\\\\
(f_r\cdot f_r)(f_t\cdot f_t)-(f_r\cdot f_t)^2&=\frac{r^{2n}}{r^{4n_{\N}-2}}\left[b+O(r)\right]\\
\end{split}
\]

and so
\[
\begin{split}
r^n\eta_r(r^nt)&=\frac{r^n\frac{r^{2n}}{r^{4n_{\N}-3}}\left[(1-n_{\N})b+O(r)\right]}{\sqrt{\frac{r^{2n}}{r^{4n_{\N}-2}}\left[b+O(r)\right]}\frac{r^{2n}}{r^{2(n_{\N}-1)}}\left[b+(1-n_{\N})^2t^2+O(r)\right]}\\
&=\frac{(1-n_{\N})b+O(r)}{\sqrt{b+O(r)}\left[b+(1-n_{\N})^2t^2+O(r)\right]}\\
&=\frac{(-a+O(r))}{\sqrt{1+O(r)}\left[1+a^2t^2+O(r)\right]} \ . \\
\end{split}
\]
Hence,
\[
\lim_{r\rightarrow 0}r^n\eta_r(r^nt)=-\frac{a}{1+a^2t^2}
\]
as claimed.
\end{proof}

To finish the proof of Theorem \ref{thm:kappa}, we will need to use that that total curvature integrand of an end is 
bounded by an integrable function. This is accomplished below. 

\begin{lemma}\label{lemma:estimate}
Let $\displaystyle\epsilon<\min\left\{1,\frac{\pi}{2(n_{\N}-1)}\right\}$ and small enough such that Lemma \ref{lemma:denom} can be applied. Then there is a constant $M$ depending only on the forms $\omega_k$ such that with  
\[
g(s)=\frac{M}{1+s^2} \left(|s|^{\frac{n-n_{\N}+n_{{\N}-1}}{n}}+1\right)
\]
we have
\[
r^n\left|\eta_r(r^ns)\right|\chi_{(-\epsilon/r^n,\epsilon/r^n)}(s)<g(s)
\]
for all $r<1$ and $s\in\R$. Here $\chi_{(a,b)}$ denotes the characteristic function of the interval $[a,b]$.
\end{lemma}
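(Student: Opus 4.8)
The plan is to bound $r^n|\eta_r(r^ns)|$ on the blown-up interval $|s|<\epsilon/r^n$ by splitting the range of $s$ into a bounded region and an unbounded region, and in each region estimating numerator and denominator of the expression for $\eta_r$ from Lemma \ref{lemma:gc1} separately. Since we are only after an integrable majorant, we do not need the sharp asymptotics of Proposition \ref{prop:blowup}; we need a two-sided bound that is uniform in $r<1$. First I would substitute $t=r^ns$ into the expansions of $f_t$, $f_{tt}$, $f_r$ from Lemma \ref{lemma:gcterms}, writing everything in terms of the $S_k$, $T_k$, $R_k$, $\beta$, $\beta'$ evaluated at $r^ns$. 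The factors of $r^{-(n_{\N}-1)}$ etc.\ appear homogeneously in numerator and denominator of $\eta_r$ and cancel (up to the explicit power of $r$ accounted for by the prefactor $r^n$), so the real content is to control the bracketed quantities. The numerator of $\eta_r$, namely $(f_r\cdot f_t)(f_{tt}\cdot f_t)-(f_t\cdot f_t)(f_r\cdot f_{tt})$, is estimated from above in section \ref{subsec:numerator}; the denominator, which up to scaling is $|f_t\wedge f_r|(f_t\cdot f_t)$, is bounded from below in section \ref{subsec:denominator} (this is what the reference to ``Lemma \ref{lemma:denom}'' in the hypothesis is about, and why $\epsilon$ must be chosen small). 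So the bulk of the lemma is really a bookkeeping step that combines those two one-sided estimates.

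The key point is the shape of the majorant $g(s)=\frac{M}{1+s^2}\bigl(|s|^{(n-n_{\N}+n_{\N-1})/n}+1\bigr)$. For $|s|$ bounded (say $|s|\le 1$), the blow-up computation already shows $r^n\eta_r(r^ns)$ is close to $-a/(1+a^2t^2)$, hence uniformly bounded; a crude version of the same estimates (keeping only that the $O(r)$ terms are bounded by a constant for $r<1$, that $b>0$ is a fixed positive constant, and that $b+(1-n_{\N})^2s^2$ dominates) gives $r^n|\eta_r(r^ns)|\le M$ there, which is $\le g(s)$. The interesting regime is $|s|$ large (but still $<\epsilon/r^n$). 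There $\beta'(r^ns)=-\sin((1-n_{\N})r^ns)$ need no longer be comparable to $r^ns$, so the denominator can no longer be bounded below simply by the $\beta'$ term; this is exactly where the restriction $\epsilon<\pi/(2(n_{\N}-1))$ and Lemma \ref{lemma:denom} enter, guaranteeing $|f_t|$ (equivalently $f_t\cdot f_t$) and $|f_t\wedge f_r|$ stay bounded away from zero in a quantitative way on $|t|<\epsilon$. With that in hand one bounds $f_t\cdot f_t$ below by a positive multiple of $b+(1-n_{\N})^2(r^ns)^2$ — giving the $1/(1+s^2)$ decay after the $r^n$ is distributed — while the numerator, via the Binet--Cauchy rewriting $|\text{num}|=|(f_t\wedge f_r)\cdot(f_t\wedge f_{tt})|\le |f_t\wedge f_r|\,|f_t\wedge f_{tt}|$, contributes at worst a factor $|f_t\wedge f_{tt}|/|f_t|^2$ which grows no faster than a fixed power of $|r^ns|$; translating that power of $r^ns=t$ back through the scaling $t=r^ns$ and the prefactor $r^n$ produces precisely the extra factor $|s|^{(n-n_{\N}+n_{\N-1})/n}$. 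I would present this by first recording the needed consequences of sections \ref{subsec:numerator}, \ref{subsec:denominator} as two displayed inequalities (numerator $\le C_1(|t|^{?}+1)\cdot(\text{common scaling})$, denominator $\ge C_2(b+(1-n_{\N})^2t^2)\cdot(\text{common scaling})$), then dividing.

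The main obstacle I expect is tracking the exponent bookkeeping so that the power of $|s|$ that survives is exactly $(n-n_{\N}+n_{\N-1})/n$ and no larger: this requires being careful about which $S_k$ terms are $O(r^{m_k-1})$ versus $O(r^n)$, about the mixed terms $r^{n_{\N}-n_k}S_k$ in Lemma \ref{lemma:gcterms}, and about how a power $|t|^\mu$ with $t=r^ns$ redistributes as $r^{n\mu}|s|^\mu$ — the $r^{n\mu}$ has to be absorbable (nonnegative net power of $r$, using $r<1$) while leaving $|s|^\mu$ with $\mu\le (n-n_{\N}+n_{\N-1})/n$. A secondary subtlety is making the constant $M$ genuinely independent of $r$: every $O(r)$ that appears must be replaced by ``bounded by a constant times $r$'' with the constant depending only on the power series data $r_{k,j}$, $t_{k,j}$ (which is legitimate since $r<1$ and the $h_k$ converge on the closed disk of radius $<1$, giving the uniform bound $r_{k,j}\le M$ already noted), and the lower bound for the denominator must come with an $r$-independent positive constant, which is what Lemma \ref{lemma:denom} supplies once $\epsilon$ is fixed. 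Once these are in place, the final inequality $r^n|\eta_r(r^ns)|\chi_{(-\epsilon/r^n,\epsilon/r^n)}(s)<g(s)$ follows by combining the bounded-$s$ and large-$s$ cases and enlarging $M$ if necessary.
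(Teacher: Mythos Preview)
Your overall architecture is right: the lemma is indeed a bookkeeping step that divides the numerator bound of Lemma~\ref{lemma:num} by the denominator bound of Lemma~\ref{lemma:denom}, and you correctly locate the content there. But the paper's proof is considerably shorter than what you outline, because it avoids the split into bounded and large $|s|$ entirely. It simply combines the two lemmas to get
\[
|\eta_r(t)|\le \frac{|f_t\wedge f_{tt}|}{|f_t|^2}\le \frac{M(|t|\,r^{n_\N-n_{\N-1}}+r^n)}{r^{2n}+t^2}
\]
uniformly for $|t|<\epsilon$, and then substitutes $t=r^ns$ directly.

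The obstacle you flag --- getting the surviving power of $|s|$ to be exactly $(n-n_\N+n_{\N-1})/n$ --- is real, and your proposal does not actually resolve it. If you substitute $t=r^ns$ into the bound above as written, the $|t|r^{n_\N-n_{\N-1}}$ term becomes $|s|\,r^{n+n_\N-n_{\N-1}}$, and after dividing by $r^{2n}(1+s^2)$ and multiplying by $r^n$ you are left with $|s|\,r^{n_\N-n_{\N-1}}/(1+s^2)$. The power of $r$ is nonnegative and can be dropped, but the resulting $|s|/(1+s^2)$ is \emph{not} integrable. The paper's one-line fix, which you are missing, is to use the hypothesis $\epsilon<1$: since $|t|<1$ and the exponent $\mu=(n-n_\N+n_{\N-1})/n$ lies in $[0,1)$, one has $|t|\le |t|^{\mu}$. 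Replacing $|t|$ by $|t|^{\mu}$ before substituting makes the powers of $r$ in the two numerator terms match exactly (both become $r^n$), and the algebra then collapses to $g(s)$ with no case distinction. This is also why the hypothesis insists on $\epsilon<1$, which your write-up does not use.

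So: drop the bounded/unbounded split, and insert the inequality $|t|\le |t|^{(n-n_\N+n_{\N-1})/n}$ for $|t|<1$ between the combined estimate and the substitution. That closes the gap.
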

\begin{proof}
We combine Lemma \ref{lemma:num} and Lemma \ref{lemma:denom}, which are proven in sections \ref{subsec:numerator} and \ref{subsec:denominator}, where we deal with the numerator and denominator of the geodesic curvature integrand separately.
They yield that there is a constant $M$ such that for small $r$,

\[
|\eta_r(t)|\leq\frac{M(|t|r^{n_{\N}-n_{{\N}-1}}+r^n)}{r^{2n}+t^2} \ .
\]
If $n_{{\N}-1}<n_{\N}$ and $|t|<\epsilon<1$ then 
\[
|t|<|t|^{\frac{n-n_{\N}+n_{{\N}-1}}{n}}
\]
and so
\[
|\eta_r(t)|\leq\frac{M(|t|^{\frac{n-n_{\N}+n_{{\N}-1}}{n}}r^{n_{\N}-n_{{\N}-1}}+r^n)}{r^{2n}+t^2} \ .
\]
Substituting $r^ns<\epsilon$ for $t$,
\[
\begin{split}
r^n\left|\eta_r(r^ns)\right|&\leq\frac{r^nM\left(|r^ns|^{\frac{n-n_{\N}+n_{{\N}-1}}{n}}r^{n_{\N}-n_{{\N}-1}}+r^n\right)}{r^{2n}+(r^ns)^2}\\
&\leq \frac{r^{2n}M\left(|s|^{\frac{n-n_{\N}+n_{{\N}-1}}{n}}+1\right)}{r^{2n}\left(1+s^2\right)}\\
&\leq \frac{M\left(|s|^{\frac{n-n_{\N}+n_{{\N}-1}}{n}}+1\right)}{1+s^2}\\
\end{split}
\]
which is integrable on $\R$.  With
\[
g(s)=\frac{M\left(|s|^{\frac{n-n_{\N}+n_{{\N}-1}}{n}}+1\right)}{1+s^2}
\]
we have
\[
r^n\left|\eta_r(r^ns)\right|\chi_{(-\epsilon/r^n,\epsilon/r^n)}(s)<g(s)
\]
for all $r<1$ and $s\in\R$, as claimed.  

\end{proof}

 This given, we can now prove:

\begin{proposition}
Let $\displaystyle\epsilon$ be as in Lemma \ref{lemma:estimate}. Then
\[
\lim_{r\rightarrow 0^+}\int_{-\epsilon}^{\epsilon}\eta_r(t)\, dt=-\pi \ .
\]
\end{proposition}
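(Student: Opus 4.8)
The plan is to evaluate $\lim_{r\to 0^+}\int_{-\epsilon}^{\epsilon}\eta_r(t)\,dt$ by the substitution $t=r^n s$, which converts the integral into $\int_{-\epsilon/r^n}^{\epsilon/r^n} r^n\eta_r(r^n s)\,ds$. The integrand here is exactly the object controlled by the two previously established facts: Proposition \ref{prop:blowup} gives the pointwise limit $r^n\eta_r(r^n s)\to -\dfrac{a}{1+a^2 s^2}$ as $r\to 0$ for each fixed $s$, and Lemma \ref{lemma:estimate} gives the uniform domination $r^n|\eta_r(r^n s)|\,\chi_{(-\epsilon/r^n,\epsilon/r^n)}(s) < g(s)$ with $g$ integrable on all of $\R$. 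So the natural proof is a single application of the dominated convergence theorem.

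Concretely, I would first write $\int_{-\epsilon}^{\epsilon}\eta_r(t)\,dt = \int_{\R} r^n\eta_r(r^n s)\,\chi_{(-\epsilon/r^n,\epsilon/r^n)}(s)\,ds$ after the change of variables, so that the domain of integration is fixed (all of $\R$) and the cutoff is absorbed into the integrand. Then I would observe that the sequence of functions $s\mapsto r^n\eta_r(r^n s)\chi_{(-\epsilon/r^n,\epsilon/r^n)}(s)$ is dominated by the fixed integrable function $g$ (Lemma \ref{lemma:estimate}) and converges pointwise a.e.\ to $-\dfrac{a}{1+a^2 s^2}$: for each fixed $s$, once $r$ is small enough the cutoff $\chi_{(-\epsilon/r^n,\epsilon/r^n)}(s)$ equals $1$ (since $\epsilon/r^n\to\infty$), so the limit is governed purely by Proposition \ref{prop:blowup}. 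Dominated convergence then yields
\[
\lim_{r\to 0^+}\int_{-\epsilon}^{\epsilon}\eta_r(t)\,dt = \int_{\R}\left(-\frac{a}{1+a^2 s^2}\right)ds = -\frac{a}{a}\left[\arctan(a s)\right]_{-\infty}^{\infty} = -\pi,
\]
using $a>0$ (which holds because $b>0$ and $n_{\N}>1$, so that $a = (n_{\N}-1)/\sqrt{b}>0$). This last elementary integral evaluation is the only computation, and it is immediate.

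I do not expect any serious obstacle in this particular proposition, since both of its ingredients are quoted from earlier results; the only points requiring a word of care are (i) checking that the pointwise limit holds a.e.\ rather than everywhere — but here it holds at every $s$ once $r$ is small, so even that subtlety is absent — and (ii) noting that $g$ is genuinely integrable on $\R$, which was already verified inside the proof of Lemma \ref{lemma:estimate} (the factor $1/(1+s^2)$ beats the polynomial growth $|s|^{(n-n_{\N}+n_{\N-1})/n}$ since that exponent is strictly less than $1$). The real work of the paper — establishing Proposition \ref{prop:blowup} and the uniform bound of Lemma \ref{lemma:estimate}, which in turn rest on Lemmas \ref{lemma:num} and \ref{lemma:denom} about the numerator and denominator of $\eta_r$ — has been done beforehand, so this proposition is essentially the clean payoff: a textbook dominated-convergence argument followed by evaluating $\int_{\R} a/(1+a^2 s^2)\,ds = \pi$.

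Once this proposition is in hand, combining it with the earlier observation that away from the finitely many singular values of $t$ (the zeros of $\beta'$, i.e.\ $t = k\pi/(n_{\N}-1)$) the integrand $\eta_r(t)$ converges uniformly to $0$, one sums the $-\pi$ contributions over the $2(n_{\N}-1)$ such singularities in $[0,2\pi]$ to obtain $\Kappa_0 = -2\pi(n_{\N}-1)$, completing the proof of Theorem \ref{thm:kappa} in Case II. I would flag that point as the next step rather than part of this proof.
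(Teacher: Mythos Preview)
Your proposal is correct and follows essentially the same approach as the paper: change variables $t=r^n s$, rewrite the integral over $\R$ with the cutoff $\chi_{(-\epsilon/r^n,\epsilon/r^n)}$, invoke Lemma \ref{lemma:estimate} for domination and Proposition \ref{prop:blowup} for the pointwise limit, and apply dominated convergence to obtain $-\int_{\R}\frac{a}{1+a^2s^2}\,ds=-\pi$. Your added remarks on why the cutoff disappears in the limit and why $a>0$ are helpful but not essential differences.
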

\begin{proof}
Using the blow-up substitution $t=r^ns$, we obtain
\[
\begin{split}
\int_{-\epsilon}^{\epsilon}\eta_r(t)\, dt&=\int_{-\epsilon/r^{n}}^{\epsilon/r^{n}}r^{n}\eta_r(r^{n}s)\,ds\\
&=\int_{-\infty}^{\infty}r^{n}\eta_r(r^ns)\chi_{(-\epsilon/r^n,\epsilon/r^n)}(s)\,ds \ .\\
\end{split}
\]

By Lemma \ref{lemma:estimate},  the estimate
\[
r^n\left|\eta_r(r^ns)\right|\chi_{(-\epsilon/r^n,\epsilon/r^n)}(s)<g(s)
\]
holds for all $r<1$ and $s\in\R$.  Hence, by Proposition \ref{prop:blowup} and the dominated convergence theorem,
\[
\begin{split}
\lim_{r\rightarrow 0^+}\int_{-\epsilon}^{\epsilon}\eta_r(t)\, dt&=\int_{-\infty}^{\infty}\lim_{r\rightarrow 0^+}r^n\eta_r(r^ns)\chi_{(-\epsilon/r^n,\epsilon/r^n)}(s)\,ds\\
&=-\int_{-\infty}^{\infty}\frac{a}{1+(as)^2}\,ds\\
&=-\pi \ .
\end{split}
\]
\end{proof}

\section{Numerator Estimate}\label{subsec:numerator}

The purpose of this and the following section is to prove the integrability Lemma \ref{lemma:estimate}.
In this section, we will estimate the numerator of the geodesic curvature from above.

We begin by providing estimates for the individual terms in the numerator:

\begin{lemma}
\label{lemma:STR}
 There is a constant $M$ such that for $r<1$ we have

\begin{align*}
|S_k(r,t)|&\leq M(|\sin{t}|+r^{m_k-1})
\\
|T_k(r,t)|&\leq M
\\
|\beta(t)|&\leq\frac{1}{n_{\N}-1}
\\
|\beta'(t)|&\leq (n_{\N}-1)|\sin{t}| \ .
\end{align*}
\end{lemma}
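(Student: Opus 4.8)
The plan is to estimate each of the four quantities directly from the power-series expansions in Definition \ref{def:STR} and the closed forms for $\alpha_{k,j}$ and $\beta$, using only the uniform bound $r_{k,j}\le M$ (equivalently $|\alpha_{k,j}(t)|\le M$) recorded after Definition \ref{def:mk}, together with $r<1$ so that $\sum_{j\ge1} r^{j-1}$ and $\sum_{j\ge1} r^{j-1}(j-n_k)^2$ converge and are bounded by a constant depending only on $n_k$.

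For $\beta$ and $\beta'$ this is immediate: since $\beta(t)=\cos((1-n_{\N})t)/(1-n_{\N})$ we get $|\beta(t)|\le 1/(n_{\N}-1)$, and since $\beta'(t)=-\sin((1-n_{\N})t)$ we have $|\beta'(t)|=|\sin((n_{\N}-1)t)|$; the bound $|\sin(N\theta)|\le N|\sin\theta|$ for positive integers $N$ (proved by induction using the addition formula, or by noting $\sin(N\theta)/\sin\theta$ is the Chebyshev polynomial $U_{N-1}(\cos\theta)$ which is bounded by $N$ on $[-1,1]$) then gives $|\beta'(t)|\le (n_{\N}-1)|\sin t|$. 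For $T_k$, the series $T_k(r,t)=\sum_{j\ge1} r^{j-1}(j-n_k)^2\alpha_{k,j}(t)$ is bounded in absolute value by $M\sum_{j\ge1} r^{j-1}(j-n_k)^2$, which for $r<1$ is a constant depending only on $n_k$, hence $\le M$ after enlarging $M$.

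The only term requiring a little care is $S_k$. From Definition \ref{def:STR}, $S_k(r,t)=\sum_{j\ne n_k} r^{j-1}\alpha_{k,j}'(t)$, and $\alpha_{k,j}'(t)=-r_{k,j}\sin((j-n_k)t+t_{k,j})$. I would first split $S_k=A+B$ where $A$ collects the finitely many terms with $j<n_k$ and $B$ collects $j>n_k$ (if $n_k=1$ then $A$ is empty). For $B$, factor out $r$: $|B|\le r\sum_{j>n_k} r^{j-n_k-1}M \le M' r \le M'|\sin t|+\cdots$ — wait, this does not directly give a $|\sin t|$; instead I keep $B$ as $O(r)$, which is absorbed into the $r^{m_k-1}$ term only if $m_k=1$. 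To handle all $j$ uniformly I instead argue as follows: write $S_k(r,t) = -r^{m_k-1}\sum_{j\ge m_k,\,j\ne n_k} r^{j-m_k} r_{k,j}\sin((j-n_k)t+t_{k,j}) - \sum_{j<m_k,\,j\ne n_k} r^{j-1} r_{k,j}\sin((j-n_k)t)\cos(t_{k,j})$, exactly the split in \eqref{eqn:Sk}, using that $\sin(t_{k,j})=0$ for $j<m_k$ by Definition \ref{def:mk}. The first sum is bounded by $M r^{m_k-1}\sum_{\ell\ge0} r^\ell \le M' r^{m_k-1}$. In the second sum each term has $|\sin((j-n_k)t)|\le |j-n_k|\,|\sin t|$ by the Chebyshev estimate again, so the finite sum is $\le M''|\sin t|$; note we also need the excluded-residue terms $j=n_k$ don't appear in $S_k$ at all, and the $\log r$ term is not part of $S_k$. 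Combining, $|S_k(r,t)|\le M(|\sin t| + r^{m_k-1})$ after taking $M$ to be the maximum of the finitely many constants produced, over $k=1,\dots,\N-1$. The case $m_k=\infty$ is covered by \eqref{eqn:Sk2}: then $S_k(r,t) = -\sum_{j\ne n_k} r^{j-1} r_{k,j}\sin((j-n_k)t)\cos(t_{k,j})$ has every term vanishing at $t=0$, and factoring $|\sin t|$ out of each term via the Chebyshev bound gives $|S_k(r,t)|\le M|\sin t|$, consistent with $r^{m_k-1}=0$.

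The main obstacle — really the only nontrivial point — is producing the factor $|\sin t|$ rather than a crude bound by a constant: this forces the use of the divisibility $\sin((j-n_k)t) = \sin t \cdot U_{|j-n_k|-1}(\cos t)$ (with the convention that an empty product is handled by $j=n_k$ being excluded), and the observation that the terms surviving at $t=0$ are precisely those with index below $m_k$, which by the definition of $m_k$ have $\sin(t_{k,j})=0$ and hence are pure multiples of $\sin((j-n_k)t)$. Everything else is routine summation of geometric-type series for $r<1$.
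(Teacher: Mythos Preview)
Your argument is correct and follows essentially the same route as the paper: split $S_k$ at $j=m_k$ using \eqref{eqn:Sk}, use $\sin(t_{k,j})=0$ for $j<m_k$ to reduce those terms to pure $\sin((j-n_k)t)$, extract $|\sin t|$ via $|\sin(N t)|\le N|\sin t|$, and bound the tail by a geometric series times $r^{m_k-1}$. The paper does exactly this but leaves the Chebyshev-type bound implicit (absorbing the factors $|j-n_k|$ into the running constant $M$), whereas you spell it out and treat the $m_k=\infty$ case separately; the content is the same.
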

\begin{proof}
We choose $M>r_{k,j}$ as before  and will absorb any constant terms into $M$ as well.

Using equation \ref{eqn:Sk}, we obtain

\[
\begin{split}
|S_k(r,t)|&\leq \sum_{j=1}^{m_k-1}M r^{j-1}|\sin(t(n_k-j)|)+r^{m_k-1}\sum_{j=m_1}^{\infty}M r^{j-m_k}\\
&\leq\frac{M(1-r^{m_k-1})|\sin{t}|}{1-r}+\frac{Mr^{m_k-1}}{1-r} \\
&\leq M(|\sin{t}|+r^{m_k-1}) \ .\\
\end{split}
\]

From Definition \ref{def:STR} we get
\[
\begin{split}
|T_k(r,t)|&\leq\sum_{j=1}^{\infty}M|j-n_k|r^{j-1}\\
&\leq\sum_{j=1}^{\infty}Mn_kr^{j-1}+\sum_{j=1}^{\infty}Mjr^{j-1}\\
&\leq\frac{Mn_k}{1-r}+\frac{M}{(1-r)^2}\\
&\leq M \ .
\end{split}
\]

The bounds for 
$|\beta(t)|$ and $|\beta'(t)|$ are immediate.
\end{proof}

The main estimate of this section is contained in
\begin{lemma}
\label{lemma:num}
If $r<1$ then
\[
r^{2(n_{\N}-1)}|f_t\wedge f_{tt}|\leq M(|t|r^{n_{\N}-n_{{\N}-1}}+r^n)
\]
where $M$ is a constant depending only on the $\omega_k$.
\end{lemma}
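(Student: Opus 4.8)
The plan is to strip the common powers of $r$ from $f_t$ and $f_{tt}$ as displayed in Lemma \ref{lemma:gcterms}, expand the wedge product into its $2\times2$ minors, and estimate each minor with the bounds of Lemma \ref{lemma:STR}. Set
\[
u=\bigl(r^{n_{\N}-n_1}S_1,\ldots,r^{n_{\N}-n_{{\N}-1}}S_{{\N}-1},\beta'\bigr),\qquad v=\bigl(r^{n_{\N}-n_1}T_1,\ldots,r^{n_{\N}-n_{{\N}-1}}T_{{\N}-1},(1-n_{\N})^2\beta\bigr),
\]
so that Lemma \ref{lemma:gcterms} gives $f_t\wedge f_{tt}=-r^{-2(n_{\N}-1)}\,u\wedge v$ and hence $r^{2(n_{\N}-1)}|f_t\wedge f_{tt}|=|u\wedge v|$. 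By the Lagrange (Binet--Cauchy) identity underlying the norm on $\R^{\N}\wedge\R^{\N}$ we have $|u\wedge v|^2=\sum_{i<j}(u_iv_j-u_jv_i)^2$, so $|u\wedge v|\le\sum_{i<j}|u_iv_j-u_jv_i|$ and it suffices to bound each of the finitely many minors $u_iv_j-u_jv_i$ by a fixed multiple of $|t|\,r^{n_{\N}-n_{{\N}-1}}+r^{n}$.

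For $1\le i<j\le{\N}-1$ the minor is $r^{2n_{\N}-n_i-n_j}(S_iT_j-S_jT_i)$, and Lemma \ref{lemma:STR} bounds $|S_iT_j-S_jT_i|$ by a constant times $|\sin t|+r^{m_i-1}+r^{m_j-1}$. For the minors with $j={\N}$ one computes $u_iv_{\N}-u_{\N}v_i=r^{n_{\N}-n_i}\bigl((1-n_{\N})^2S_i\beta-\beta'T_i\bigr)$; applying Lemma \ref{lemma:STR} (with $|\beta|\le(n_{\N}-1)^{-1}$, $|\beta'|\le(n_{\N}-1)|\sin t|$, $|T_i|\le M$) bounds this by a constant times $r^{n_{\N}-n_i}\bigl(|\sin t|+r^{m_i-1}\bigr)$.

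It remains to check that, after using $|\sin t|\le|t|$ and $r<1$, every resulting term is at most $|t|\,r^{n_{\N}-n_{{\N}-1}}+r^{n}$ up to a constant. Since $n_i\le n_{{\N}-1}<n_{\N}$ for $i\le{\N}-1$, the power of $r$ that multiplies $|\sin t|$ is always at least $n_{\N}-n_{{\N}-1}$: for the first kind $2n_{\N}-n_i-n_j=(n_{\N}-n_i)+(n_{\N}-n_j)\ge n_{\N}-n_{{\N}-1}$, and for the second it is already $n_{\N}-n_i\ge n_{\N}-n_{{\N}-1}$. Likewise, each bare power of $r$ is at least $n=\min_k\{n_{\N}-n_k+m_k-1\}$: for instance $2n_{\N}-n_i-n_j+m_i-1=(n_{\N}-n_i+m_i-1)+(n_{\N}-n_j)\ge n$, and similarly for the $r^{m_j-1}$ and $r^{m_i-1}$ terms above. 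Summing over the $\binom{{\N}}{2}$ minors and absorbing all numerical constants together with the finitely many $\omega_k$-dependent constants into one constant $M$ yields $|u\wedge v|\le M(|t|\,r^{n_{\N}-n_{{\N}-1}}+r^{n})$, which is the claim.

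The argument uses nothing beyond Lemmas \ref{lemma:gcterms} and \ref{lemma:STR}; the only point requiring attention is the exponent bookkeeping, i.e.\ verifying in each of the two families of minors that a power of $r$ attached to a $\sin$-factor is bounded below by $n_{\N}-n_{{\N}-1}$ and that a bare power of $r$ is bounded below by $n$. Both follow mechanically from $n_i\le n_{{\N}-1}\le n_{\N}$ and the definition of $n$, so I expect no genuine obstacle.
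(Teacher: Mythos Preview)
Your proof is correct and follows essentially the same approach as the paper: the paper expands $r^{4(n_{\N}-1)}\bigl(|f_t|^2|f_{tt}|^2-(f_t\cdot f_{tt})^2\bigr)$ algebraically into $\sum_{k<j}r^{2(2n_{\N}-n_k-n_j)}(S_kT_j-S_jT_k)^2+\sum_{k}r^{2(n_{\N}-n_k)}\bigl((1-n_{\N})^2\beta S_k-\beta' T_k\bigr)^2$, which is exactly your Lagrange/Binet--Cauchy decomposition of $|u\wedge v|^2$ into $2\times2$ minors, and then estimates each term with Lemma~\ref{lemma:STR} and the same exponent bookkeeping. The only difference is cosmetic: you name $u,v$ and invoke the identity explicitly, whereas the paper writes out the expansion by hand.
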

\begin{proof}
Using Lemma \ref{lemma:gcterms}, a simple calculation produces the following expression for the left hand side 
in the claim of Lemma \ref{lemma:num}:
\[
\begin{split}
&r^{4(n_{\N}-1)}\left(|f_t|^2|f_{tt}|^2-(f_t\cdot f_{tt})^2\right)\\
=&\left(\sum_{k=1}^{{\N}-1}r^{2(n_{\N}-n_k)}S_k^2+\beta'(t)^2\right)\left(\sum_{k=1}^{{\N}-1}r^{2(n_{\N}-n_k)}T_k^2+(1-n_{\N})^4\beta(t)^2\right)\\
&-\left(\sum_{k=1}^{{\N}-1}r^{2(n_{\N}-n_k)}S_kT_k+(1-n_{\N})^2\beta(t)\beta'(t)\right)^2\\
=&\sum_{k<j}r^{2(2n_{\N}-n_k-n_j)}\left(S_k^2T_j^2-2S_kT_kS_jT_j+S_j^2T_k^2\right)\\
&+\sum_{k=1}^{{\N}-1}r^{2(n_{\N}-n_k)}\left((1-n_{\N})^4\beta(t)^2S_k^2+\beta'(t)^2T_k^2-2S_kT_k(1-n_{\N})^2\beta(t)\beta'(t)\right)\\
=&\sum_{k<j}r^{2(2n_{\N}-n_k-n_j)}(S_kT_j-S_jT_k)^2+\sum_{k=1}^{{\N}-1}r^{2(n_{\N}-n_k)}\left((1-n_{\N})^2\beta(t)S_k-\beta'(t)T_k\right)^2 \ .\\
\end{split}
\]

Thus we obtain  the following upper bound:
\[
\begin{split}
&r^{2(n_{\N}-1)}\sqrt{|f_t|^2|f_{tt}|^2-(f_t\cdot f_{tt})^2}\\
\leq&\sum_{k<j}r^{2n_{\N}-n_k-n_j}\left(|S_k||T_j|+|S_j||T_k|\right)+\sum_{k=1}^{{\N}-1}r^{n_{\N}-n_k}\left((1-n_{\N})^2|\beta(t)||S_k|+|\beta'(t)||T_k|\right) \ .\\
\end{split}
\]

Then, by Lemma \ref{lemma:STR},
\[
\begin{split}
r^{2n_{\N}-n_k-n_j}\left(|S_k||T_j|+|S_j||T_k|\right)&\leq M\left(r^{2n_{\N}-n_k-n_j}|\sin{t}|+r^{2n_{\N}-n_k-n_j+m_k-1}+r^{2n_{\N}-n_k-n_j+m_j-1}\right)\\
&\leq M\left(r^{n_{\N}-n_{{\N}-1}}r^{n_{\N}+n_{{\N}-1}-n_k-n_j}|\sin{t}|+r^n(r^{n_{\N}-n_j}+r^{n_{\N}-n_k})\right)\\
&\leq M\left(r^{n_{\N}-n_{{\N}-1}}|t|+r^n\right)\\
\end{split}
\]
and
\[
\begin{split}
r^{n_{\N}-n_k}\left((1-n_{\N})^2|\beta(t)||S_k|+|\beta'(t)||T_k|\right)&\leq r^{n_{\N}-n_k}\left((n_{\N}-1)M(|\sin{t}|+r^{m_k-1})+M(n_{\N}-1)|\sin{t}|\right)\\
&\leq M\left(r^{n_{\N}-n_k}|\sin{t}|+r^{n_{\N}-n_k+m_k-1}\right)\\
&\leq M(r^{n_{\N}-n_{{\N}-1}}|t|+r^n) \ .\\
\end{split}
\]
Thus,
\[
\begin{split}
r^{2(n_{\N}-1)}\sqrt{|f_t|^2|f_{tt}|^2-(f_t\cdot f_{tt})^2}\leq &\sum_{k<j}M\left(r^{n_{\N}-n_{{\N}-1}}|t|+r^n\right)+\sum_{k=1}^{{\N}-1}M\left(r^{n_{\N}-n_{{\N}-1}}|t|+r^n\right)\\
\leq&M\left(|t|r^{n_{\N}-n_{{\N}-1}}+r^n\right)\\
\end{split}
\]
which proves the claim.
\end{proof}

\section{Denominator estimate}\label{subsec:denominator}

In this section, we will prove the following  lower bound for the denominator term $|f_t|$ of the geodesic curvature:

\begin{lemma}\label{lemma:denom}
There is a constant $\mu>0$ such that for $|t|$ and  $r$  small enough we have
\[
 r^{2(n_{{\N}}-1)} |f_{t}|^2  \ge\mu \left( r^{2n} +  t^2\right) \ .
\]

\end{lemma}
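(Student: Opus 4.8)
The plan is to exploit the explicit expansion of $f_t$ from Lemma~\ref{lemma:gcterms}, namely
\[
r^{2(n_{\N}-1)}|f_t|^2 = \sum_{k=1}^{\N-1} r^{2(n_{\N}-n_k)} S_k(r,t)^2 + \beta'(t)^2 ,
\]
and to bound the right-hand side below by $\mu(r^{2n}+t^2)$ by handling the two summands on the right separately, playing off against each other the term $\beta'(t)^2$ (which is large away from $t=0$) and the $S_{k^*}$-terms for $k\in k^*$ (which carry the leading $r^n$ behavior near $t=0$). Since $\beta'(t)=-\sin((1-n_{\N})t)$, for $|t|$ small we have $\beta'(t)^2 \ge c\, t^2$ for a fixed constant $c>0$; this already dominates the $t^2$ part of the claimed lower bound. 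So the remaining task is to produce the $r^{2n}$ part, and this is where the definition of $n$ and of $k^*$ enters.

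First I would fix an index $k_0\in k^*$, so that $n_{\N}-n_{k_0}+m_{k_0}-1 = n$ and $r_{k_0,m_{k_0}}\sin(t_{k_0,m_{k_0}})\neq 0$. From the expansion~\eqref{eqn:Sk} of $S_{k_0}$ one reads off
\[
r^{n_{\N}-n_{k_0}} S_{k_0}(r,t) = -r^{n}\, r_{k_0,m_{k_0}}\sin(t_{k_0,m_{k_0}}) + r^{n_{\N}-n_{k_0}}\bigl(\text{terms with a factor }|\sin t|\text{ or a higher power of }r\bigr).
\]
More precisely, every term of $S_{k_0}$ with $j<m_{k_0}$ carries a factor $\sin((n_{k_0}-j)t)$, hence is $O(|t|)$ uniformly, and the tail from $j\ge m_{k_0}$ beyond the displayed leading term is $O(r^{n+1})$; the term $j=m_{k_0}$ itself contributes the nonzero constant $-r^n r_{k_0,m_{k_0}}\sin(t_{k_0,m_{k_0}})$ plus an $O(r^n|t|)$ piece from its $\cos((m_{k_0}-n_{k_0})t)-1$ correction and $\sin$ part. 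Collecting, there are constants $C_1,C_2>0$ with
\[
\bigl| r^{n_{\N}-n_{k_0}} S_{k_0}(r,t) + r^{n} r_{k_0,m_{k_0}}\sin(t_{k_0,m_{k_0}}) \bigr| \le C_1 r^{n}|t| + C_2 r^{n+1}
\]
for $r<1$. Hence for $|t|$ and $r$ small enough the left side is at most $\tfrac12 |r_{k_0,m_{k_0}}\sin(t_{k_0,m_{k_0}})|\, r^n$, which gives $r^{2(n_{\N}-n_{k_0})}S_{k_0}(r,t)^2 \ge \tfrac14 r_{k_0,m_{k_0}}^2\sin^2(t_{k_0,m_{k_0}})\, r^{2n} =: c' r^{2n}$.

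Combining the two estimates, for $|t|$ and $r$ small enough,
\[
r^{2(n_{\N}-1)}|f_t|^2 \ge r^{2(n_{\N}-n_{k_0})}S_{k_0}(r,t)^2 + \beta'(t)^2 \ge c' r^{2n} + c\,t^2 \ge \mu\,(r^{2n}+t^2)
\]
with $\mu=\min(c',c)$, which is the claim. (All the discarded summands $r^{2(n_{\N}-n_k)}S_k^2$ are nonnegative and only help.) The main subtlety, and the only place one must be careful, is the interplay in the ``small $|t|$, small $r$'' regime: one cannot bound $S_{k_0}$ below pointwise in $t$ by a constant times $r^n$ for all $t\in[-\epsilon,\epsilon]$, because $S_{k_0}(r,t)$ can vanish for $t$ of size comparable to $r$. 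The resolution is exactly the dichotomy above — for $|t|\ge \delta r$ (with $\delta$ fixed small) the term $\beta'(t)^2\gtrsim t^2 \gtrsim \delta^2 r^2$, which is $\gtrsim r^{2n}$ since $n\ge 1$; while for $|t|\le \delta r$ the correction $C_1 r^n|t| \le C_1\delta r^{n+1}$ is negligible against the leading $r^n$ term of $S_{k_0}$. So one should split into these two ranges of $t$ rather than seeking a single uniform bound, and then the constant $\mu$ is extracted from whichever term wins in each range. I expect writing out the $O$-terms in~\eqref{eqn:Sk} carefully enough to justify $C_1,C_2$ — using $r_{k,j}\le M$ and geometric-series bounds as in Lemma~\ref{lemma:STR} — to be routine but the step that needs the most care.
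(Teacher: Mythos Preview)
Your strategy is sound and in fact more direct than the paper's: the paper proves upper and lower linear bounds $C'_k|t|+D'_k r^{m_k-1}\le r^{n_k-1}(f_t)_k\le C_k|t|+D_kr^{m_k-1}$ for \emph{every} $k\le\N-1$, squares them via a Young-type inequality with a free parameter $\xi$ (Lemma~\ref{lemma:young}) to obtain lower bounds of the shape $(1-1/\xi^2)C''^2t^2+(1-\xi^2)D''^2r^{2(m_k-1)}$ whose $t^2$ coefficient is \emph{negative}, and then chooses $\xi$ so that the aggregated negative $t^2$ contributions are absorbed by $\beta'(t)^2$. Your idea of keeping only one index $k_0\in k^*$ and arguing by a dichotomy on $|t|$ avoids this balancing act entirely.

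There is, however, a concrete slip in your error estimate that breaks the argument as written whenever $m_{k_0}\ge 3$. The terms of $S_{k_0}$ with $j<m_{k_0}$, after multiplication by $r^{n_{\N}-n_{k_0}}$, contribute $O(r^{n_{\N}-n_{k_0}+j-1}|t|)$; the worst of these is $j=1$, giving $O(r^{n_{\N}-n_{k_0}}|t|)$, \emph{not} $O(r^n|t|)$. Since $n_{\N}-n_{k_0}=n-(m_{k_0}-1)$, your displayed bound should read
\[
\bigl| r^{n_{\N}-n_{k_0}} S_{k_0}(r,t) + r^{n} r_{k_0,m_{k_0}}\sin(t_{k_0,m_{k_0}}) \bigr| \le C_1\, r^{n_{\N}-n_{k_0}}|t| + C_2\, r^{n+1}.
\]
With this correction the threshold $|t|\le\delta r$ no longer suffices: for $m_{k_0}\ge 3$ and $|t|\sim\delta r$ the error term is of order $r^{n-m_{k_0}+2}\gg r^n$. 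The fix is simply to split at $|t|=\delta r^{m_{k_0}-1}$ instead. For $|t|\le\delta r^{m_{k_0}-1}$ the error is then $\le C_1\delta r^{n}+C_2r^{n+1}$, negligible against the leading $r^n$ term as you intended; for $|t|\ge\delta r^{m_{k_0}-1}$ one has $\beta'(t)^2\ge c\,t^2\ge c\delta^2 r^{2(m_{k_0}-1)}\ge c\delta^2 r^{2n}$ (the last step using $m_{k_0}-1<n$, which follows from $n_{\N}>n_{k_0}$), so $\beta'(t)^2$ alone dominates $\mu(r^{2n}+t^2)$. With that single adjustment your proof is complete.
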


We begin the proof by setting up the terms that need to be estimated.
Recall from Lemma \ref{lemma:gcterms} that

\[f_{t} = \frac{1}{r^{n_{{\N}}-1}} \left( r^{n_{{\N}} - n_{1}} S_{1} (r,t) , \dots, r^{n_{{\N}}-n_{{\N}-1}} S_{{\N}-1} (r,t), \beta'(t) \right)\]
where
\[
\begin{split}
S_{k}(r,t) =& -\sum_{j=1,j\neq n_k}^{m_k-1} r^{j-1} r_{k,j}\sin((j-n_k)t)\cos(t_{k,j})
\\
& -\sum_{j=m_k, j\neq n_k}^{\infty} r^{j-1} r_{k,j} \left(\sin((j-n_k)t)\cos(t_{k,j})+\cos((j-n_k)t)\sin(t_{k,j})\right)
\\
\end{split}
\]
and
\[
\beta(t) = \frac{\cos((1-n_{{\N}})t)}{1-n_{{\N}}} \ .
\]

Abbreviate

\begin{align*} 
A_k & = \sum_{j=2,j\neq n_k}^{\infty} -r_{k,j} \sin ((j-n_{k})t) \cos(t_{k,j})r^{j-1}\\
& = \sum_{j=2,j\neq n_k}^{m_k}-r_{k,j} \sin ((j-n_{k})t)\cos(t_{k,j}) r^{j-1} +  \sum_{j=m_k+1,j\neq n_k}^{\infty} -r_{k,j} \sin ((j-n_{k})t) \cos(t_{k,j}) r^{j-1}
\\
B_k &= \sum_{j=m_k+1,j\neq n_k}^{\infty} -r_{k,j} \cos ((j-n_{k})t)\sin (t_{k,j}) r^{j-1}.
\end{align*}

Notice that by  Definition \ref{def:mk} of the numbers $m_{k}$,
\[
A_k+B_k=r^{n_{k} - 1}\left( f_{t} \right)_{k} - \left(-r_{k,1} \sin ((1-n_{k})t)\cos(t_{k,1})-r_{k,m_k}\cos((m_k-n_k)t)\sin(t_{k,m_k})r^{m_k-1}\right) \ .
\]

Generically, we expect $r_{k,1} \sin(t_{k,1}) = 0$, which is what we will assume for the rest of the proof.
  If $r_{k,1} \sin(t_{k,1}) \neq 0$, then $m_{k} = 1$, and  the first sum in $A_k$ is empty.  This will mean that there is no $|t|$ term in the estimates below, simplifying the argument.

Our first goal is to estimate the components of $f_{t}$ from above and below:

\begin{lemma}\label{lemma:upperlower}
There are nonzero constants $C_{k},D_{k},C'_{k},D'_{k}$   that depend on  $r_{k,1}$ and $r_{k,m_{k}} \sin (t_{k,m_{k}})$ such that $C_{k}>0$,$C'_{k}<0$ and $D_{k}$ and $D'_{k}$ have the same sign and so that
\[
C'_{k}|t| + D'_{k} r^{m_{k}-1} \le r^{n_{k} - 1} \left(f_{t} \right)_{k} \le C_{k} |t| + D_{k} r^{m_{k}-1}\ .
\]

\end{lemma}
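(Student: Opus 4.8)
The whole lemma will fall out of the expansion of $S_k$, once one notices that $r^{n_k-1}(f_t)_k=S_k(r,t)$: this is immediate from Lemma~\ref{lemma:gcterms}, where the $k$-th component of $f_t$ ($1\le k\le\N-1$) equals $r^{1-n_k}S_k$. Rearranging the identity for $A_k+B_k$ recorded just above the statement, write
\[
r^{n_k-1}(f_t)_k \;=\; G_k(r,t)\;+\;r^{m_k-1}E_k(r,t),
\]
where $G_k(r,t)=-r_{k,1}\sin((1-n_k)t)\cos(t_{k,1})+A_k(r,t)$ and $E_k(r,t)=-r_{k,m_k}\cos((m_k-n_k)t)\sin(t_{k,m_k})+r^{1-m_k}B_k(r,t)$. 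The idea of this split is that $G_k$ is precisely the part that vanishes at $t=0$, while $r^{m_k-1}E_k$ carries a \emph{nonzero} leading coefficient in front of $r^{m_k-1}$. We use the running assumption $r_{k,1}\sin(t_{k,1})=0$, so that $\cos(t_{k,1})=\pm1$ and $m_k\ge2$; if $m_k=\infty$ the second summand is absent by \ref{eqn:Sk2} and only the estimate on $G_k$ below is needed.

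I would first bound $G_k$: using $|\sin x|\le|x|$, $r_{k,j}\le M$, and that $\sum_{j\ge2}(j+n_k)r^{j-1}$ starts at $r^{1}$ and converges, one gets $|A_k|\le(\mathrm{const})\,r|t|$ for $r\le\tfrac12$, whence $|G_k(r,t)|\le C_0|t|$ for all $t$ and all small $r$, with $C_0>0$ depending only on the forms (and on $r_{k,1}$). Next I would analyze $E_k$: the correction $r^{1-m_k}B_k$ is likewise a power series starting at $r^{1}$, hence $O(r)$, while $\cos((m_k-n_k)t)\to1$ as $t\to0$; therefore $E_k(r,t)\to D_0:=-r_{k,m_k}\sin(t_{k,m_k})$ as $(r,t)\to(0,0)$, and $D_0\ne0$ by Definition~\ref{def:mk}. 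By continuity of $E_k$ there are $\rho,\delta>0$ such that for $0<r<\rho$, $|t|<\delta$ the number $E_k(r,t)$ has the same sign as $D_0$ and satisfies $\tfrac12|D_0|\le|E_k(r,t)|\le\tfrac32|D_0|$.

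Combining these: for $0<r<\min(\rho,\tfrac12)$ and $|t|<\delta$, multiplying the bounds on $E_k$ by $r^{m_k-1}>0$ confines $r^{m_k-1}E_k$ between $\tfrac12 D_0 r^{m_k-1}$ and $\tfrac32 D_0 r^{m_k-1}$ (in that order when $D_0>0$, reversed when $D_0<0$), while $-C_0|t|\le G_k\le C_0|t|$; adding the two gives the claimed two-sided inequality with $C_k=C_0>0$, $C_k'=-C_0<0$, and $\{D_k,D_k'\}=\{\tfrac12 D_0,\tfrac32 D_0\}$, which share the sign of $-r_{k,m_k}\sin(t_{k,m_k})$ and depend only on $r_{k,m_k}\sin(t_{k,m_k})$; in the degenerate case $m_k=\infty$ one takes $D_k,D_k'$ to be arbitrary nonzero constants of one sign, as they multiply the vacuous factor $r^{m_k-1}$. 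The only point that needs care is the sign bookkeeping in this last step — verifying that pinning $E_k$ near the nonzero constant $D_0$ forces $D_k$ and $D_k'$ to have a common sign regardless of $\operatorname{sign} D_0$, and that the genuine remainders $A_k=O(r|t|)$, $B_k=O(r^{m_k})$ are indeed swallowed by the $|t|$-term and the $r^{m_k-1}$-term — beyond which there is no analytic difficulty.
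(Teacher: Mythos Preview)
Your argument is correct and follows essentially the same route as the paper's own proof: both isolate the leading sine term and the leading $r^{m_k-1}$ term, bound the remainders $A_k$ and $B_k$ as small perturbations, and read off the two-sided estimate. Your packaging into $G_k$ and $r^{m_k-1}E_k$ together with the continuity argument pinning $E_k$ near $D_0$ is a slightly cleaner way of handling the sign of $D_k,D_k'$ than the paper's terse ``$\le C_k|t|+D_k r^{m_k-1}$'', and your bounds $|A_k|=O(r|t|)$, $|B_k|=O(r^{m_k})$ are marginally sharper than the paper's, but the underlying decomposition and logic are the same.
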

\begin{proof}
We begin by showing that $A_k$ and $B_k$ are relatively small:
\[\begin{split}
|A_k| & \leq \sum_{j=2,j\neq n_k}^{m_k}M|j-n_{k}| |t| r^{j-1} +  \sum_{j=m_k+1,j\neq n_k}^{\infty} M r^{j-1} \\
& \leq MC \frac{r}{1-r}|t| + M \frac{r^{m_{k}}}{1-r} \\
& < \delta' \left( |t| + r^{m_{k}-1} \right)
\end{split}\]
for $r$ small enough. Secondly,

\[\begin{split}
|B_k| & \leq \sum_{j=m_k+1,j\neq n_k}^{\infty}M  r^{j-1} \\
& = M \frac{r^{m_{k}}}{1-r} \\
& < \delta' r^{m_{k}-1} \ .
\end{split}\]
Thus,
\[
\begin{split} 
r^{n_{k} - 1} \left(f_{t}\right)_{k} &\leq -r_{k,1}\sin ((1-n_{k})t)\cos(t_{k,1}) -r_{k,m_{k}} \cos ((m_{k}-n_{k})t)\sin(t_{k,m_{k}}) r^{m_{k}-1} 
\\
&\qquad + \delta' |t| + 2\delta' r^{m_{k}-1} \\
& \leq C_{k} |t| + D_{k} r^{m_{k}-1}
\end{split}
\]
and
\[
\begin{split} 
r^{n_{1}-1}\left(f_{t} \right)_{k} & \geq -r_{k,1}\sin ((1-n_{k})t)\cos(t_{k,1}) -r_{k,m_{k}} \cos ((m_{k}-n_{k})t)\sin(t_{k,m_{k}}) r^{m_{k}-1} 
\\
&\qquad - \delta' |t| - 2\delta' r^{m_{k}-1} \\
& \geq C'_{k}|t| + D'_{k} r^{m_{k}-1} \ .
\end{split}
\]

\end{proof}

Since we really want to estimate $|\left(f_{t}\right)_{k}|^2$, we will need the following simple consequence of Young's inequality (see \cite{hlp}) which we state without proof.

\begin{lemma}\label{lemma:young}
For any nonzero constants $C, D, N,\xi$, the following inequality holds for all  $t$ and all $r> 0$:
\[
|Ct + Dr^N|^2 \geq \left( 1- \frac{1}{\xi^2}\right) C^2 t^2 + \left( 1- \xi^2 \right) D^2 r^{2N} \ .
\]

\end{lemma}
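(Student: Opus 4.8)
The plan is to prove Lemma \ref{lemma:young} as an elementary consequence of Young's inequality, treating it as a purely algebraic fact about real numbers with no reference to the surface geometry. First I would observe that the only inequality we really need is the ``reverse triangle inequality with a loss,'' namely that for real numbers $x,y$ and any $\lambda>0$ one has $|x+y|^2 \geq (1-\lambda) x^2 + (1-1/\lambda) y^2$ when $0<\lambda<1$ (so that both coefficients are controlled), but in fact the statement holds for all $\xi\ne 0$ with the coefficients $1-1/\xi^2$ and $1-\xi^2$ as written, since one of them is negative and the bound is then trivial unless $|\xi|<1$ in the first slot or $|\xi|>1$ in the second; the useful regime is $|\xi|<1$.

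The key computation is the expansion $|Ct+Dr^N|^2 = C^2 t^2 + 2 CD\, t\, r^N + D^2 r^{2N}$, so the claim reduces to showing $2CD\, t\, r^N \geq -\tfrac{1}{\xi^2} C^2 t^2 - \xi^2 D^2 r^{2N}$. This is exactly Young's inequality $2uv \geq -(u^2/\xi^2 + \xi^2 v^2)$, equivalently $0 \leq (u/\xi + \xi v)^2$, applied with $u = Ct$ and $v = Dr^N$ (one checks $(u/\xi)(\xi v) = uv = CD\,t\,r^N$, so the cross term matches). Expanding the square $(Ct/\xi + \xi D r^N)^2 \geq 0$ gives $C^2 t^2/\xi^2 + 2CD\,t\,r^N + \xi^2 D^2 r^{2N} \geq 0$, which rearranges immediately to the desired inequality. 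Note $r>0$ is only needed to make $r^N$ and $r^{2N}$ well-defined real quantities; the sign of $t$, $C$, $D$ is immaterial.

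There is essentially no obstacle here — the lemma is deliberately stated ``without proof'' in the text precisely because it is a one-line application of completing the square. The only thing to be slightly careful about is the direction of the inequality and which choice of $\xi$ makes the conclusion nontrivial when it is later invoked (in Lemma \ref{lemma:denom}, one wants $|\xi|$ slightly less than $1$ so that $1-1/\xi^2<0$ is harmless against the large $C^2 t^2$ term while $1-\xi^2>0$ retains a genuine multiple of $D^2 r^{2N}$, or the mirror-image choice). Since the statement as written is uniform in $\xi\ne 0$, no case distinction is needed in the proof itself: the single identity $(Ct/\xi+\xi Dr^N)^2\ge 0$ does all the work, and I would present it in exactly that form.
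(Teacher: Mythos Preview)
Your proof is correct: expanding $(Ct/\xi + \xi Dr^N)^2 \ge 0$ and rearranging is exactly the one-line argument that establishes the inequality for all nonzero $\xi$. The paper in fact states this lemma without proof, so there is no alternative approach to compare against; your completing-the-square derivation is the intended (and standard) justification.
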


Now we apply this lemma to obtain the desired bound for all indices except $k=\N$.

\begin{lemma}
For any $0 < |\xi| < 1$, there exists  nonzero constants $C''$ and $D''$ such that
\[r^{2(n_{k} - 1)}\left|\left(f_{t} \right)_{k} \right|^2 \geq \left( 1- \frac{1}{\xi^2}\right) C''^2 t^2 + \left( 1- \xi^2 \right) D''^2 r^{2(m_{k}-1)}
\]
for  $k = 1,2, \dots, \N-1$
\end{lemma}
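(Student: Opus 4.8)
The plan is to combine the two lemmas immediately preceding the statement: Lemma \ref{lemma:upperlower} gives, for each $k\in\{1,\dots,\N-1\}$, a two-sided bound
\[
C'_{k}|t| + D'_{k} r^{m_{k}-1} \le r^{n_{k} - 1} \left(f_{t} \right)_{k} \le C_{k} |t| + D_{k} r^{m_{k}-1},
\]
and Lemma \ref{lemma:young} gives a quadratic lower bound for expressions of the form $|Ct+Dr^N|^2$. First I would observe that $r^{2(n_k-1)}|(f_t)_k|^2$ is the square of the middle quantity above, so it suffices to bound that square below by the square of either endpoint, whichever is smaller in absolute value at the given $(r,t)$. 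The subtlety is that the sign information in Lemma \ref{lemma:upperlower} — namely $C_k>0$, $C'_k<0$, and $D_k,D'_k$ of the \emph{same} sign — is exactly what guarantees that $r^{n_k-1}(f_t)_k$ cannot be squeezed to something of smaller magnitude than both endpoints permit.

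Concretely, I would argue as follows. Fix $(r,t)$ with $r>0$ small and $|t|$ small. If $r^{n_k-1}(f_t)_k \ge 0$, then since the upper bound $C_k|t|+D_k r^{m_k-1}$ dominates it, we get $r^{2(n_k-1)}|(f_t)_k|^2 \le (C_k|t|+D_kr^{m_k-1})^2$; that's the wrong direction, so instead I use that $r^{n_k-1}(f_t)_k$ is squeezed between the two bounds and hence $|r^{n_k-1}(f_t)_k|$ is at least the distance from $0$ to whichever bound is on the same side — but more cleanly, one splits into the cases according to the sign of $D_k r^{m_k-1}$ relative to the $|t|$ term. The cleanest route: apply Lemma \ref{lemma:young} to \emph{both} endpoint expressions $C_k t + D_k r^{m_k-1}$ and $C'_k t + D'_k r^{m_k-1}$, obtaining for each a lower bound of the form $(1-\xi^{-2})C^2 t^2 + (1-\xi^2)D^2 r^{2N}$ (note $1-\xi^{-2}<0$ and $1-\xi^2>0$ for $0<|\xi|<1$, so these are genuine lower bounds valid for all $t,r$, not just the endpoint values). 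Then take $C''$ with $C''^2=\min(C_k^2,C_k'^2)$ and $D''$ with $D''^2=\min(D_k^2,D_k'^2)$; because $r^{n_k-1}(f_t)_k$ lies between the two endpoint expressions and $D_k,D_k'$ have the same sign (so $D_kr^{m_k-1}$ and $D_k'r^{m_k-1}$ have the same sign), the value $r^{n_k-1}(f_t)_k$ is itself of the form $\tilde C t + \tilde D r^{m_k-1}$ with $|\tilde C|\in[\,|C_k'|,|C_k|\,]$ up to the small perturbations absorbed already, or more robustly one checks directly that $|r^{n_k-1}(f_t)_k|^2$ exceeds the Young bound with the minimal constants. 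Applying Lemma \ref{lemma:young} once more with these $C'',D''$ and the given $\xi$ yields the claimed inequality.

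The main obstacle is handling the sign bookkeeping rigorously: Lemma \ref{lemma:young}'s bound with $0<|\xi|<1$ has a \emph{negative} $t^2$ coefficient, so one cannot simply say ``the square of something between $a$ and $b$ is at least the square of the smaller one.'' The correct statement is that for any real $x$ lying between $C't+D'r^N$ and $Ct+Dr^N$ (with $D,D'$ same sign), $x$ can be written as $\lambda(Ct+Dr^N)+(1-\lambda)(C't+D'r^N)$ for some $\lambda\in[0,1]$, i.e. $x=(\lambda C+(1-\lambda)C')t + (\lambda D+(1-\lambda)D')r^N$, and here $\lambda D+(1-\lambda)D'$ is a convex combination of same-sign numbers hence nonzero with magnitude at least $\min(|D|,|D'|)$, while $\lambda C+(1-\lambda)C'$ is a convex combination of a positive and a negative number, so its magnitude is at most $\max(|C|,|C'|)$ but possibly small. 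This is where one must be careful: the $t^2$ contribution genuinely can degenerate when the convex combination of $C,C'$ passes through zero, but that is precisely compensated because at such $(r,t)$ the $r^{2N}$ term carries the full weight. Rather than track $\lambda$ explicitly, the tidy fix is to note $x^2 \ge$ (value given by Lemma \ref{lemma:young} applied to $\tilde C t+\tilde D r^N$ where $\tilde C=\lambda C+(1-\lambda)C'$, $\tilde D=\lambda D+(1-\lambda)D'$), then bound $\tilde C^2\ge 0$ (dropping the negative term is fine since its coefficient $1-\xi^{-2}<0$ makes $(1-\xi^{-2})\tilde C^2$ only smaller as $\tilde C^2$ grows — wait, that goes the wrong way). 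So in fact the honest argument must keep $C''^2$ equal to $\min$, not $0$, and verify $(1-\xi^{-2})\tilde C^2 \ge (1-\xi^{-2})C''^2$ requires $\tilde C^2 \le C''^2$, which fails. The resolution is to instead take $C''$ realizing the \emph{minimum} over the relevant regime and absorb the degenerate directions into the $D''$ term; I would present this by first reducing, via the intermediate-value observation above, to the case where $x=Ct+Dr^N$ with fixed nonzero $C,D$ — handling the endpoint cases $\lambda=0,1$ — and then invoking Lemma \ref{lemma:young} directly, noting that the stated conclusion only claims \emph{existence} of nonzero $C'',D''$, so choosing them as (a small multiple of) the endpoint constants from Lemma \ref{lemma:upperlower} suffices once the sign analysis pins down which endpoint is binding. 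I expect this sign-and-convexity argument, rather than any computation, to be the crux; everything else is a direct substitution.
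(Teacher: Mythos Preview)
Your convex-combination setup is sound, but the resolution you land on is wrong in one specific place. Writing $x=r^{n_k-1}(f_t)_k=\tilde C|t|+\tilde D\,r^{m_k-1}$ with $\tilde C=\lambda C_k+(1-\lambda)C_k'$ and $\tilde D=\lambda D_k+(1-\lambda)D_k'$, Lemma~\ref{lemma:young} gives
\[
x^2\ge\Bigl(1-\tfrac{1}{\xi^2}\Bigr)\tilde C^2t^2+(1-\xi^2)\tilde D^2\,r^{2(m_k-1)}.
\]
Since $D_k,D_k'$ have the same sign, $\tilde D^2\ge\min(D_k^2,D_k'^2)$; since $\tilde C\in[C_k',C_k]$, one has $\tilde C^2\le\max(C_k^2,C_k'^2)$. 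Because the coefficient $1-\xi^{-2}$ is \emph{negative}, the inequality $(1-\xi^{-2})\tilde C^2\ge(1-\xi^{-2})C''^2$ needs $\tilde C^2\le C''^2$, so the correct choice is $C''^2=\max(C_k^2,C_k'^2)$, not the minimum. With that single swap your argument closes in one line; your worry about $\tilde C$ passing through zero is harmless (smaller $\tilde C^2$ only helps), and the attempted ``reduce to the endpoints $\lambda=0,1$'' does not make sense because $\lambda$ depends on $(r,t)$.

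For comparison, the paper avoids the convex combination altogether. It splits on the sign of $D_k$. When $D_k,D_k'>0$ it partitions $(r,t)$-space into the set where the lower endpoint $C_k'|t|+D_k'r^{m_k-1}$ is nonnegative---there $x$ dominates a nonnegative quantity, squaring preserves the inequality, and Lemma~\ref{lemma:young} applied to that endpoint gives the bound with $C''=C_k'$, $D''=D_k'$---and the complementary set, where $D_k'r^{m_k-1}<|C_k'|\,|t|$; combining this with $1-\xi^2<\xi^{-2}-1$ forces the entire right-hand side $(1-\xi^{-2})C_k'^2t^2+(1-\xi^2)D_k'^2r^{2(m_k-1)}$ to be negative, hence trivially below $x^2\ge0$. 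The case $D_k,D_k'<0$ is symmetric via the upper endpoint. This region-splitting sidesteps the $(r,t)$-dependent coefficients entirely; your (corrected) route is equally valid and arguably more direct once the max/min is sorted out.
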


\begin{proof}
 We first consider the case when $D_{k}, D'_{k}>0$.  Let 
 \[
 \Omega = \{ (r,t) : C'_{k}|t| + D'_{k}r^{m_{k}-1} < 0\}\ ,
 \]
 and denote by $\Omega^c$ the complement.
 
By Lemma \ref{lemma:upperlower} we have  on $\Omega^c$
\[
r^{n_{k} - 1}\left(f_{t} \right)_{k} \geq C'_{k}|t| + D'_{k}r^{m_{k}-1} \geq 0.
\]  
Thus Lemma \ref{lemma:young}  implies
\[r^{2(n_{k} - 1)}\left| \left(f_{t} \right)_{k}\right|^2 \geq \left( 1- \frac{1}{\xi^2}\right) C_{k}'^2 t^2 + \left( 1- \xi^2 \right) D_{k}'^2 r^{2(m_{k}-1)}.\]
Now, on $\Omega$, we have $0 \leq D_{k}'r^{m_{k}-1} < -C_{k}'|t|$ which implies that
\[
D_{k}'^2 r^{2(m_{k}-1)} < C_{k}'^2 |t|^2\ .
\]
As 
\[
0 \leq 1 - \xi^2 < \frac{1}{\xi^2} - 1
\]
for all nonzero $\xi \neq \pm 1$ we obtain
\[
\left( 1- \xi^2 \right) D_{k}'^2 r^{2(m_{k}-1)} < \left( \frac{1}{\xi^2}-1\right) C_{k}'^2 t^2
\]
which  is equivalent to our claim
\[
\left( 1- \frac{1}{\xi^2}\right) C_{k}'^2 t^2 + \left( 1- \xi^2 \right) D_{k}'^2 r^{2(m_{k}-1)} < 0\ .
\]
 
 Therefore,
\[r^{2(n_{k}-1)}\left| \left(f_{t} \right)_{k} \right|^2 \geq 0 >\left( 1- \frac{1}{\xi^2}\right) C_{k}'^2 t^2 + \left( 1- \xi^2 \right) D_{k}'^2 r^{2(m_{k}-1)}.\]
This proves the result when $D_{k},D_{k}'>0$.

Next we consider the case when $D_{k},D_{k}'<0$.  
Let 
\[
\Omega = \{(r,t) : C_{k}|t| + D_{k}r^{m_{k}-1} > 0\}\ .
\]

By Lemma \ref{lemma:upperlower} we have  on $\Omega^c$

\[
r^{n_{k}-1}\left(f_{t} \right)_{k} \leq C_{k}|t| + D_{k}r^{m_{k}-1} \leq 0 \ ,
\]  
which gives the trivial lower bound
\[
r^{2(n_{k}-1)}\left| \left(f_{t} \right)_{k} \right|^2 \geq \left( 1- \frac{1}{\xi^2}\right) C_{k}^2 t^2 + \left( 1- \xi^2 \right) D_{k}^2 r^{2(m_{k}-1)}.\]
Now, on $\Omega$, we have $0 \leq -D_{k}r^{m_{k}-1}< C_{k}|t|$  and so
\[D_{k}^2 r^{2(m_{k}-1)} < C_{k}^2 |t|^2.\]
As above, this implies
\[
\left( 1- \frac{1}{\xi^2}\right) C_{k}^2 t^2 + \left( 1- \xi^2 \right) D_{k}^2 r^{2(m_{k}-1)} < 0
\]
as long as   $0 < |\xi| < 1 $.  
Therefore,
\[
r^{2(n_{k}-1)}\left| \left(f_{t} \right)_{k} \right|^2 \geq 0 >\left( 1- \frac{1}{\xi^2}\right) C_{k}^2 t^2 + \left( 1- \xi^2 \right) D_{k}^2 r^{2(m_{k}-1)}.
\]

\end{proof}

We finish the proof by considering
 the last coordinate. Here, 
\[
\begin{split} 
r^{2(n_{{\N}} - 1)}\left| \left(f_{t} \right)_{{\N}}\right|^2 & = \sin^{2} ((n_{{\N}} - 1)t)\\
& \geq \frac{4}{\pi^{2}} (n_{{\N}} - 1)^{2} t^2
\end{split}
\]
for $t$ small enough.  

Now, choose $\xi$ so that $0 < |\xi| < 1$ and
\[\left( 1 - \frac{1}{\xi^2}\right) C''^2 \geq -\frac{1}{({\N}-1)\pi^2} (n_{{\N}}-1)^2.\]
Then for small $r$,
\[\begin{split} r^{2(n_{{\N}}-1)} |f_{t}|^2 & \geq \sum_{k=1}^{{\N}-1} r^{2(n_{{\N}}-n_{k})} \left[\left( 1- \frac{1}{\xi^2}\right) C''^2 t^2 + \left( 1- \xi^2 \right) D^2 r^{2(m_{k}-1)}\right]  + \frac{4}{\pi^2} (n_{{\N}}-1)^2 t^{2}\\
& \geq (1- \xi^2)D^2 r^{2n} + \frac{3}{\pi^2} (n_{{\N}}-1)^2 t^2
\end{split}\]
which is a good enough lower bound since the coeffficients are positive.
This concludes the proof of Lemma \ref{lemma:denom}.

\section{Case III: Different Top Exponents, $n_\N=1$}\label{subsec:case3}

This last section deals with the case that the end is given by coordinate 1-forms that are all holomorphic except for the last, which has a simple pole. The simplest case is the graph given by
\[
\left(\omega_1,\omega_2,\omega_3\right)=\left(1,i,\frac{1}{z}\right)dz
\]
which has a horn end of type $(0,0,1)$ at $0$. 
We will show that the Gauss curvature of a horn end is always $0$.  When $n_{\N-1}<n_\N$ and $n_\N\neq 1$, each singularity of the geodesic curvature integrand contributed $-\pi$ to the Gauss curvature.  The Gauss curvature behaves differently when $n_\N=1$.  Either the geodesic curvature integrand has no singularities and converges uniformly to $0$ as $r\rightarrow 0$ or the contributions from the singularities cancel.
\begin{example}
Before analyzing the total curvature when $n_\N=1$, consider the an example of a horn end of type $(-1,0,1)$ with figure eight cross sections, given by  
\[
\left(\omega_1,\omega_2,\omega_3\right)=\left(iz,i,\frac{1}{z}\right)dz \ .
\]
Applying the holomorphic diffeomorphism $\phi(z)=ize^z$ and an affine transformation to the above one-forms, we get the normalized one-forms
\[
\begin{split}
\psi_1&=\left(-iz-3iz^2-4iz^3+O(r^4)\right)dz\\
\psi_2&=\left(\frac{1}{z}-2z-\frac{3z^2}{2}+\frac{2z^3}{3}+O(r^4)\right)dz\\
\psi_3&=\left(\frac{1}{z}+1\right)dz \ .\\
\end{split}
\]
\begin{figure}[h]
	\centerline{ 
		\includegraphics[height=1.9in]{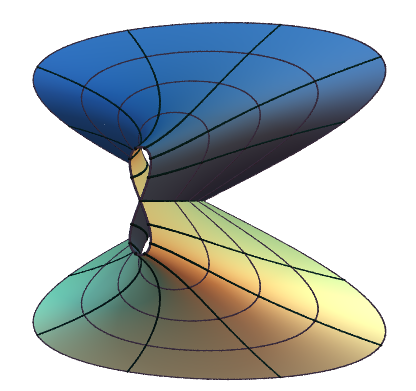}
	}
	\caption{Normalized end of type $(-1,0,1)$}
	\label{figure:case-101}
\end{figure}

The geodesic curvature integrand $\displaystyle \eta_r(t)$ is given by 
{\footnotesize
\[
\begin{split}
&\frac{r\left(4\sin^4{t}+\frac{r}{2}(8\cos{t}-9\cos(3t)+3\cos(5t))+O(r^2)\right)}{\sqrt{\sin^2{t}(1+8r\cos{t})+r^2(5-\frac{1}{2}\cos(2t)-\frac{5}{2}\cos(4t))+O(r^3)}\left(\sin^2(t)+\frac{r^2}{2}(5-3\cos(4t))+O(r^3)\right)}\ .\\
\end{split}
\]}
This is bounded for $r\to 0$ unless $\sin{t}=0$. Away from open neighborhoods of $\sin{t}=0$, the geodesic curvature integrand converges uniformly to 0 for $r\to 0$. 

\begin{figure}[h]
	\centerline{ 
		\includegraphics[height=1.9in]{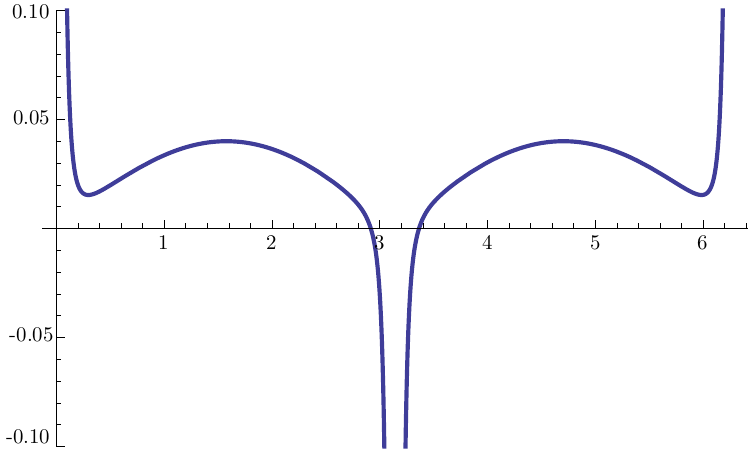}
	}
	\caption{Graph of $\eta_r(t)$ for small $r$}
	\label{figure:case-101b}
\end{figure}
If $\alpha\in\{0,\pi\}$ then 
\[
r\eta_r(rt+\alpha)=\begin{cases}\frac{1+O(r)}{\sqrt{2+t^2+O(r)}(1+t^2+O(r))},&\alpha=0\\-\frac{1+O(r)}{\sqrt{2+t^2+O(r)}(1+t^2+O(r))}&\alpha=\pi\\\end{cases}
\]
and
\[
\begin{split}
\lim_{r\rightarrow 0^+}\int_0^{2\pi}\eta_r(t)dt&=\sum_{\alpha}\lim_{r\rightarrow 0^+}\int_{\alpha-\epsilon}^{\alpha+\epsilon}\eta_r(t)dt\\
&=\sum_{\alpha}\lim_{r\rightarrow 0^+}\int_{-\epsilon}^{\epsilon}r\eta_r(r s+\alpha)ds\\
&=\sum_{\alpha}\lim_{r\rightarrow 0^+}\int_{-\infty}^{\infty}r\eta_r(r s+\alpha)\chi_{(-\epsilon/r,\epsilon/r)}(s)ds\\
&=\int_{-\infty}^{\infty}\frac{1}{\sqrt{2+s^2}(1+s^2)}ds+\int_{-\infty}^{\infty}\frac{-1}{\sqrt{2+s^2}(1+s^2)}ds\\
&=\frac{\pi}{2}-\frac{\pi}{2}\\
&=0 \ .
\end{split}
\]
The curvature contributions from the singularities are $\pi/2$ and $-\pi/2$, yielding the desired curvature of $0$.
\end{example}

We now return to the general discussion of the case that $n_\N=1$.  As in case II, after applying an orthogonal transformation, we can assume that $n_k<1$ for $1\leq k\leq \N-1$.  Then we can apply Proposition \ref{prop:coordinates} to normalize the last coordinate 1-form $\displaystyle\omega_\N(z)=\frac{1}{z}\,dz$.   Now, we  use an additional orthogonal transformation on the first $\N-1$ forms to further simplify the expression.  Consider the two vectors $v$ and $w$ that consist of the real and imaginary parts of the coefficients of terms of order $n_{\N-1}$.  If $v$ and $w$ span a plane (over $\mathbb{R}$) then we can rotate the surface so that the plane is parallel to the $x_{\N-2}x_{\N-1}$-plane, i.e. $n_{\N-3}<n_{\N-2}=n_{\N-1}$ with $t_{\N-2,1}\notin\{0,\pi\}$, and $t_{\N-1,1}\in\{0,\pi\}$.  Otherwise $v$ and $w$ are linearly dependent and we can rotate these vectors to be parallel to the $x_{\N-1}$-axis, i.e. $n_{\N-2}<n_{\N-1}$ with  $t_{\N-1,1}\in\{0,\pi\}$.  Note that to ensure $t_{\N-1,1} \in \{0, \pi\}$ in either case, we may need to apply a rotation in the domain but this will not affect $\omega_{\N}$.  
  
In this case, we will assume that the ${\N}$ coordinate 1-forms are given by
\[
\begin{split}
\omega_k(z)&=\sum_{j=1}^{\infty}r_{k,j}e^{it_{k,j}}z^{j-1-n_k}\,dz,\hspace{.1in}1\leq k\leq {\N}-1\\
\omega_{\N}(z)&=\frac{1}{z}\,dz\\
\end{split}
\]
with $n_{k-1}\leq n_{k}$ for $2\leq k\leq \N-2$, where either

\[
n_{\N-2}=n_{\N-1}<1,\hspace{.1in} t_{\N-2,1}\notin\{0,\pi\}, \hspace{.1in} t_{\N-1,1}\in\{0,\pi\}
\]
or
\[
n_{{\N}-2}<n_{\N-1}<1, \hspace{.1in} t_{\N-1,1}\in\{0,\pi\} \ .
\]
As in case II, there is a constant $M>0$ such that $r_{k,j}\leq M$ for $j\geq 1$ and $1\leq k\leq {\N}-2$. 

We have power series expansions
\[
f(r,t)=\left(\begin{split}
&\sum_{j=1}^{\infty}r^{j-n_1}\alpha_{1,j}(t)\\
&\hspace{.5in}.\\
&\hspace{.5in}.\\
&\hspace{.5in}.\\
&\sum_{j=1}^{\infty}r^{j-n_{{\N}-1}}\alpha_{{\N}-1,j}(t)\\
&\log{r}\\
\end{split}\right)
\]
where for $k=1,\ldots,{\N}-1$,
\[
\alpha_{k,j}(t)=\frac{r_{k,j}\cos{((j-n_k)t+t_{k,j})}}{j-n_k} \ .
\]

Here, we use the same definitions for $S_k$ and $T_k$, see Definition \ref{def:STR}.  The definition of $R_k$ is almost the same:
\begin{equation}
\label{eqn:STR}
\begin{split}
S_k(r,t)&=\sum_{j=1,j\neq n_k}^{\infty}r^{j-1}\alpha_{k,j}'(t)\\
T_k(r,t)&=-S_k'(r,t)=\sum_{j=1}^{\infty}r^{j-1}(j-n_k)^2\alpha_{k,j}(t)\\
R_k(r,t)&=\sum_{j=1}^{\infty}r^{j-1}(j-n_k)\alpha_{k,j}(t)\\
\end{split}
\end{equation}
Then we have by straightforward computation:

\begin{lemma}\label{lemma:gctermsnd1easy}
\[
\begin{split}
f_t&=r^{1-n_{\N-1}}\left(r^{n_{\N-1}-n_1}S_1(r,t),\ldots,r^{n_{\N-1}-n_{{\N}-2}}S_{{\N}-2}(r,t),S_{\N-1}(r,t),0\right)\\
f_{tt}&=-r^{1-n_{\N-1}}\left(r^{n_{\N-1}-n_1}T_1(r,t),\ldots,r^{n_{\N-1}-n_{{\N}-2}}T_{{\N}-2}(r,t),T_{\N-1}(r,t),0\right)\\
f_r&=\frac{1}{r}\left(r^{1-n_1}R_1(r,t),\ldots,r^{1-n_{\N-2}}R_{\N-2}(r,t),r^{1-n_{{\N}-1}}R_{\N-1}(r,t),1\right)\\
\eta_r(t)&=r\frac{\left(\sum r^{-2n_k}S_k^2\right)\left(\sum r^{-2n_k}R_kT_k\right)-\left(\sum r^{-2n_k}S_kR_k\right)\left(\sum r^{-2n_k}S_kT_k\right)}{\sqrt{\left(\sum r^{2(1-n_k)}R_k^2+1\right)\sum r^{-2n_k}S_k^2-r^{2(1+n_{\N-1})}\left(\sum r^{-2n_k}S_kR_k\right)^2}\sum r^{-2n_k}S_k^2}
\end{split}
\]
\end{lemma}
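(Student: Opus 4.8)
The plan is to prove this by a direct computation, following the same pattern as Lemma~\ref{lemma:gcterms} in Case~II; the one structural novelty is that the final coordinate function is now $\log r$, which is $t$-independent and has the trivial $r$-derivative $1/r$. First I would record that, since $n_k<1$ for $1\le k\le\N-1$, none of these exponents is a positive integer, so no term among the $\alpha_{k,j}$ is exceptional, and the bounds $r_{k,j}\le M$, $|\alpha_{k,j}(t)|\le M$ show that the series for $f(r,t)$ together with its formal $t$- and $r$-derivatives converge uniformly on $\{\delta\le r\le 1-\delta\}\times\R$ for every $\delta>0$. Hence $f$ is smooth on $\D^*$ and all the derivatives below may be taken termwise.

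Next I would compute the three vector fields coordinate by coordinate. For $1\le k\le\N-1$, differentiating $\sum_j r^{j-n_k}\alpha_{k,j}(t)$ in $t$ gives $(f_t)_k=\sum_j r^{j-n_k}\alpha_{k,j}'(t)=r^{1-n_k}S_k(r,t)$; applying the identity $\alpha_{k,j}''(t)=-(j-n_k)^2\alpha_{k,j}(t)$ gives $(f_{tt})_k=-r^{1-n_k}T_k(r,t)$; and differentiating in $r$ gives $(f_r)_k=\sum_j(j-n_k)r^{j-n_k-1}\alpha_{k,j}(t)=r^{-n_k}R_k(r,t)$. For the last coordinate, $f_\N=\re\int^z z^{-1}\,dz=\log r$, so $(f_t)_\N=(f_{tt})_\N=0$ and $(f_r)_\N=1/r$. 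Pulling $r^{1-n_{\N-1}}$ out of $f_t$ and $f_{tt}$, and $1/r$ out of $f_r$, produces the first three displayed formulas.

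Finally I would substitute these into the geodesic-curvature formula of Lemma~\ref{lemma:gc1}, using $|X\wedge Y|^2=|X|^2|Y|^2-(X\cdot Y)^2$ to expand $|f_r\wedge f_t|$. Each of the inner products $f_t\cdot f_t$, $f_{tt}\cdot f_t$, $f_r\cdot f_t$, $f_r\cdot f_{tt}$, $|f_r|^2$ then evaluates to a single monomial in $r$ times one of the scalar sums $\sum_k r^{-2n_k}S_k^2$, $\sum_k r^{-2n_k}S_kT_k$, $\sum_k r^{-2n_k}R_kS_k$, $\sum_k r^{-2n_k}R_kT_k$, $\sum_k r^{2(1-n_k)}R_k^2$, the only exception being that the $\N$-th slot adds the extra constant $1$ inside $|f_r|^2$ (since there $(f_r)_\N=1/r$ while $(f_t)_\N=(f_{tt})_\N=0$). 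Collecting the overall powers of $r$ in the numerator and in the two factors of the denominator and cancelling the common ones yields the claimed expression for $\eta_r(t)$. There is no essential difficulty in the argument; the only place demanding care is precisely this power-of-$r$ accounting together with the asymmetry of the last coordinate, which forces $f_t$ and $f_{tt}$ to lose their $\N$-th component while $f_r$ retains a constant one.
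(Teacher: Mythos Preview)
Your approach is correct and is exactly what the paper does: the authors state the lemma follows ``by straightforward computation'' without further detail, and your termwise differentiation of the series for $f$ together with substitution into Lemma~\ref{lemma:gc1} is precisely that computation. Your observation that $n_k<1$ forces $j\neq n_k$ for all $j\ge 1$, so that the exceptional $\log$-terms in the first $\N-1$ coordinates disappear and $S_k,T_k,R_k$ take their simplified forms, is the only point requiring any comment, and you handle it correctly.
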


If $n_{\N-2}=n_{\N-1}<1,\hspace{.1in} t_{\N-2,1}\notin\{0,\pi\}$, and $t_{\N-1,1}\in\{0,\pi\}$ then there are no singularities for $\eta_r$ when $r=0$ since they would occur when 
\[
r_{\N-2,1}^2\sin^2((1-n_{\N-1})t+t_{\N-2,1})+r_{\N-1,1}^2\sin^2((1-n_{\N-1})t)=0
\]
which would force $t_{\N-2,1}\in\{0,\pi\}$.  Hence, $\eta_r(t)$ converges uniformly to $0$ as $r\rightarrow 0$, and
\[
\lim_{r\rightarrow 0^+}\int_0^{2\pi}\eta_r(t)\,dt=0 \ .
\]

If $n_{\N-2}<n_{\N-1}<1$  it is useful to apply another normalization, utilizing the holomorphic diffeomorphism $\phi(z)=ze^z$.  If $n_{\N-1}=0$ then it will be necessary to do an orthogonal transformation to ensure that $n_{\N-1}<0$.  We can do this since in this case, $\omega_{\N-1}(z)=(\pm 1+h.o.t)dz$ and $\omega_\N(z)=\left(\frac{1}{z}+1\right)dz$.  This gives us the one-forms

\[
\begin{split}
\omega_k(z)&=\sum_{j=1}^{\infty}r_{k,j}e^{it_{k,j}}z^{j-1-n_k}dz,\hspace{.1in}1\leq k\leq {\N}-1\\
\omega_{\N-1}(z)&=\left(\frac{1}{z}+\sum_{j=1}^{\infty}r_{\N-1,j}e^{it_{\N-1,j}}z^{j-1-n_{\N-1}}\right)dz\\
\omega_{\N}(z)&=\left(\frac{1}{z}+1\right)dz\\
\end{split}
\]
where $n_k\leq n_{k+1}$ for $k=1,2,\ldots,{\N}-2$ and $n_{{\N}-1}<0$.

Then we have power series expansions
\[
f(r,t)=\left(\begin{split}
&\sum_{j=1}^{\infty}r^{j-n_1}\alpha_{1,j}(t)\\
&\hspace{.5in}.\\
&\hspace{.5in}.\\
&\hspace{.5in}.\\
&\sum_{j=1}^{\infty}r^{j-n_{{\N}-2}}\alpha_{{\N}-2,j}(t)\\
&\log{r}+\sum_{j=1}^{\infty}r^{j-n_{{\N}-1}}\alpha_{{\N}-1,j}(t)\\
&\log{r}+r\cos{t}\\
\end{split}\right) \ .
\]

Again, using $R_k, S_k$, and $T_k$ as defined in \ref{eqn:STR}, we have by straightforward computation:

\begin{lemma}\label{lemma:gctermsnd1hard}
\[
\begin{split}
f_t&=r\left(r^{-n_1}S_1(r,t),\ldots,r^{-n_{{\N}-1}}S_{{\N}-1}(r,t),-\sin{t}\right)\\
f_{tt}&=-r\left(r^{-n_1}T_1(r,t),\ldots,r^{-n_{\N-1}}T_{{\N}-1}(r,t),\cos{t}\right)\\
f_r&=\frac{1}{r}\left(r^{1-n_1}R_1(r,t),\ldots,r^{1-n_{\N-2}}R_{\N-2}(r,t),1+r^{1-n_{\N-1}}R_{\N-1}(r,t),1+r\cos{t}\right) \ .\\
\end{split}
\]
\end{lemma}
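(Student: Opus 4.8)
The plan is to obtain the three displayed formulas directly, by differentiating the power series expansion of $f(r,t)$ displayed just above the statement term by term (in $t$, then $t$ again, then $r$). Two preliminary remarks make this legitimate. First, the hypotheses of the hard case give $n_1\le n_2\le\cdots\le n_{\N-1}<0$, so every $n_k$ occurring here is a negative integer; consequently $j-n_k\ge 2$ for all $j\ge1$, the denominators $j-n_k$ in $\alpha_{k,j}(t)$ never vanish, the side condition ``$j\ne n_k$'' in the definitions of $S_k$, $T_k$, $R_k$ in (\ref{eqn:STR}) is vacuous, and each coordinate of $f$, $f_t$, $f_{tt}$, $f_r$ is (apart from at most one $\log r$ term in the last two coordinates) a power series in $r$ with strictly positive exponents. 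Second, since each $\omega_k$ is holomorphic on $\D^*$ and meromorphic on $\D$, the series $h_k(z)=z^{n_k}\omega_k(z)/dz=\sum_{j\ge1}r_{k,j}e^{it_{k,j}}z^{j-1}$ converges on $\D$, so the series for $f$ and all of its formal termwise derivatives converge locally uniformly on $\D^*$; this justifies differentiating under the summation sign.

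With this in hand I would compute coordinate by coordinate. For $1\le k\le\N-2$ the $k$-th coordinate of $f$ is $\sum_{j\ge1}r^{j-n_k}\alpha_{k,j}(t)$, whence $\partial_t$ yields $\sum_{j\ge1}r^{j-n_k}\alpha_{k,j}'(t)=r^{1-n_k}S_k(r,t)$, $\partial_t^2$ yields (via the identity $\alpha_{k,j}''(t)=-(j-n_k)^2\alpha_{k,j}(t)$ recorded earlier) $\sum_{j\ge1}r^{j-n_k}\alpha_{k,j}''(t)=-r^{1-n_k}T_k(r,t)$, and $\partial_r$ yields $\sum_{j\ge1}(j-n_k)r^{j-n_k-1}\alpha_{k,j}(t)=r^{-n_k}R_k(r,t)$. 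The $(\N-1)$-st coordinate equals $\log r+\sum_{j\ge1}r^{j-n_{\N-1}}\alpha_{\N-1,j}(t)$; the sum $\sum_{j\ge1}r^{j-n_{\N-1}}\alpha_{\N-1,j}$ behaves exactly as above, while the extra $\log r$ contributes $0$ to $\partial_t$ and $\partial_t^2$ and $1/r$ to $\partial_r$ --- the latter being precisely the ``$1+$'' in the $(\N-1)$-st slot of $f_r$. The last coordinate is $\log r+r\cos t$, with $\partial_t=-r\sin t$, $\partial_t^2=-r\cos t$, and $\partial_r=1/r+\cos t=(1+r\cos t)/r$. Assembling the coordinates and factoring out the common $r$ (for $f_t$, $f_{tt}$) or $1/r$ (for $f_r$) reproduces the three stated identities verbatim.

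I do not expect a genuine obstacle here: the statement is purely bookkeeping, and these computations are the exact analogue of those behind Lemmas \ref{lemma:gcterms} and \ref{lemma:gctermsnd1easy}. The only point that calls for a word of care is the interchange of summation with $\partial_t$ and $\partial_r$, which is supplied by the local uniform convergence on $\D^*$ of the termwise-differentiated series noted in the first paragraph; granted that, the remainder is just matching the resulting series to the definitions of $S_k$, $T_k$, and $R_k$ in (\ref{eqn:STR}).
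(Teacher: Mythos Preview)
Your proposal is correct and is precisely what the paper intends: the paper itself offers no proof beyond the phrase ``by straightforward computation,'' and you have carried out exactly that computation, coordinate by coordinate, together with the (appropriate) remark on local uniform convergence that justifies termwise differentiation. There is nothing to add.
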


Recall from Lemma \ref{lemma:gc2} that the geodesic curvature integrand is bounded above by
\[
|\eta_r(t)| \le  \frac{|f_t\wedge f_{tt}|}{ |f_t|^2}\ .
\]
By Lemma \ref{lemma:gctermsnd1hard}, this is bounded for $r\to 0$ unless $\sin{t}=0$. Moreover, the formula for $\eta_r(t)$ in Lemma \ref{lemma:gc1} together with Lemma \ref{lemma:gctermsnd1hard} shows that away from open neighborhoods of $\sin{t}=0$, the integrand converges uniformly to 0 for $r\to 0$. 

We will now analyze the singular behavior of the geodesic curvature integrand.  Suppose $\sin{t}=0$.  Then $\cos{t}\neq 0$, and again  Lemma \ref{lemma:gc1}  and Lemma \ref{lemma:gctermsnd1hard} imply that  our integrand does indeed have singularities at $0$ and $\pi$ when $r\to 0$.
We use the same blow-up argument from section \ref{subsec:blowup} to deal with the singularities in this case.  With this normalization, the curvature contribution from the singularities at $t=0$ and $t=\pi$ will have {\em opposite} signs and thus cancel. They can be computed as $\pi/2$ and $-\pi/2$, respectively, instead of a contribution of $-\pi$ from each singularity.

First, we deal with the blow-up near the singularity at $t=0$.  Using the slightly adjusted terms
\[
\begin{split}
n&=\min_k\{-n_k+m_k-1\}\\
k^*&=\{k\in\{1,\ldots,{\N}-1\}:-n_k+m_k-1=n\}\\
b&=\sum_{k\in k^*}r_{k,m_k}^2\sin^2(t_{k,m_k})\\
c&=r_{\N-1,m_{\N-1}}\sin(t_{\N-1,m_{\N-1}})\\
\end{split}
\]
 for the blow-up near the singularity at $t=0$, the terms from Lemma \ref{lemma:gctermsnd1hard} can be estimated as

\[
\begin{split}
S_k(r,r^nt)&=
\begin{cases}
-r^{m_k-1}r_{k,m_k}\sin{(t_{k,m_k})}+O(r^{m_k}), & m_k<\infty
\\
r_{k,1}(n_k-1)r^nt+O(r^{n+1}), & m_k=\infty
\end{cases}\\
T_k(r,r^nt)&=(1-n_k)r_{k,1}\cos(t_{k,1})+O(r)\\
R_k(r,r^nt)&=r_{k,1}\cos(t_{k,1})+O(r)\\
\cos{r^nt}&=1+O(r^{2n})\\
\sin{r^nt}&=r^nt+O(r^{n+1}) \ .\\
\end{split}
\]

Then
\[
\begin{split}
f_t\cdot f_t&=r^{2(n+1)}\left[b+t^2+O(r)\right]\\
f_r\cdot f_{tt}&=-1+O(r)\\
f_r\cdot f_t&=-r^n\left[r^{-n_{\N-1}+m_{\N-1}-1-n}c+t+O(r)\right]\\
f_{tt}\cdot f_t&=r^{2+n}\left[t+O(r)\right]\\
f_r\cdot f_r&=\frac{1}{r^{2}}\left[2+O(r)\right] \ .\\
\end{split}
\]
Combining everything gives
\[
(f_r\cdot f_t)(f_{tt}\cdot f_t)-(f_t\cdot f_t)(f_r\cdot f_{tt})=r^{2(n+1)}\left[b-r^{-n_{\N-1}+m_{\N-1}-1-n}ct+O(r)\right]
\]

\[
(f_r\cdot f_r)(f_t\cdot f_t)-(f_r\cdot f_t)^2=r^{2n}\left[2b-2r^{-n_{\N-1}+m_{\N-1}-1-n}ct-r^{2(-n_{\N-1}+m_{\N-1}-1-n}c^2+t^2+O(r)\right]
\]
and so
\[
r^n\eta(r^nt)=\frac{b-r^{-n_{\N-1}+m_{\N-1}-1-n}ct+O(r)}{\sqrt{2b-2r^{-n_{\N-1}+m_{\N-1}-1-n}ct-r^{2(-n_{\N-1}+m_{\N-1}-1-n}c^2+t^2+O(r)}(b+t^2+O(r))} \ .
\]
Hence,
\[
\lim_{r\rightarrow 0}r^n\eta(r^nt)=\begin{cases}
\frac{b-ct}{\sqrt{2b-2ct-c^2+t^2}(b+t^2)}, \hspace{.45in}-n_{\N-1}+m_{\N-1}-1=n
\\
\frac{b}{\sqrt{2b+t^2}(b+t^2)}, \hspace{.9in}-n_{\N-1}+m_{\N-1}-1>n
\end{cases} \ .\\
\]

When we do the blowup centered at $t=\pi$ instead of at $t=0$, after employing the substitution $t\rightarrow (-1)^nt+\pi$, we get  
\[
\lim_{r\rightarrow 0}r^n\eta(r^nt)=\begin{cases}
-\frac{b-ct}{\sqrt{2b-2ct-c^2+t^2}(b+t^2)}, \hspace{.45in}-n_{\N-1}+m_{\N-1}-1=n
\\
-\frac{b}{\sqrt{2b+t^2}(b+t^2)}, \hspace{.9in}-n_{\N-1}+m_{\N-1}-1>n
\end{cases} \ .\\
\]

We are in a setting in which we can directly apply the arguments in sections \ref{subsec:blowup},\ref{subsec:numerator}, and \ref{subsec:denominator} to show $\eta_r$ is uniformly bounded by a $L^1$ function.  So, the curvature contributions from the two singularities at $t=0$ and $t=\pi$ will cancel, giving zero curvature.

This concludes the proof in case III of Theorem \ref{thm:kappa}.


\bibliographystyle{plain}
\bibliography{bibliography}

\end{document}